\newtheorem{theorem}{Theorem}[section]
\newtheorem{lemma}[theorem]{Lemma}
\newtheorem{proposition}[theorem]{Proposition}
\theoremstyle{definition}
\newtheorem{definition}[theorem]{Definition}
\newtheorem{example}[theorem]{Example}
\newtheorem*{remark*}{Remark}
\newcommand{\ZZ}{\mathbb{Z}}
\newcommand{\CC}{\mathbb{C}}
\newcommand{\RR}{\mathbb{R}}
\newcommand{\oh}{\overline{h}}
\newcommand{\un}{\underline{n}}
\newcommand{\te}{\widetilde{e}}
\newcommand{\tf}{\widetilde{f}}
\newcommand{\tv}{\widetilde{v}}
\newcommand{\tX}{\widetilde{X}}
\renewcommand{\oe}{\overline{e}}
\DeclareMathOperator{\Ker}{Ker}
\DeclareMathOperator{\Stab}{Stab}
\DeclareMathOperator{\Vol}{Vol}
\let\Im\relax
\DeclareMathOperator{\Im}{Im}
\newcommand{\calB}{\mathcal{B}}
\newcommand{\calI}{\mathcal{I}}
\newcommand{\calM}{\mathcal{M}}
\DeclareMathOperator{\Aut}{Aut}
\DeclareMathOperator{\val}{val}
\DeclareMathOperator{\Div}{Div}
\DeclareMathOperator{\Jac}{Jac}
\title{Critical groups in harmonic abelian quotients}
\author{Mariia Vetluzhskikh}
\address{University of Maryland, College Park, MD 20742}
\email{S4pe2eAud3@protonmail.com}
\author{Dmitry Zakharov}
\address{Department of Mathematics, Central Michigan University, Mount Pleasant, MI 48859}
\email{\href{mailto:dvzakharov@gmail.com}{dvzakharov@gmail.com}}
\begin{document}

\begin{abstract}
    A harmonic cover of graphs $p:\widetilde{X}\to X$ induces a surjective pushforward morphism $p_*:\Jac(\tX)\to \Jac(X)$ on the critical groups. In the case when $p$ is Galois with abelian Galois group, we compute the order of the kernel of $p_*$, and hence the relationship between the numbers of spanning trees of $\widetilde{X}$ and $X$, in terms of Zaslavsky's bias matroid associated to the cover $p:\widetilde{X}\to X$.
\end{abstract}

\maketitle

\section{Introduction}

We investigate the relationship between three algebraic objects associated to graphs and their Galois covers. We may view a finite connected graph $X$ as a discrete analogue of an algebraic curve, and define divisor theory on $X$ in terms of a chip-firing operation (see~\cite{2007BakerNorine},~\cite{2018CorryPerkinson}). The discrete analogue of the Jacobian variety of a curve is the \emph{Jacobian group} $\Jac(X)$, also known as the \emph{critical group} of $X$ in the combinatorics literature. It is a finite abelian group whose order is equal to the number of spanning trees of $X$. On the other hand, we may view $X$ as a combinatorial version of a number field, and define primes as certain equivalence classes of closed paths on $X$. By analogy with the Dedekind zeta function of a number field, we define the \emph{Ihara zeta function} $\zeta(s,X)$ (see~\cite{1992Bass}), and $\zeta(s,X)$ determines $|\Jac(X)|$ by an appropriate version of the class number formula~\cite{1998Northshield}. Finally, we consider the \emph{graphic matroid} $\calM(X)$ on the edge set of $X$. The bases of $\calM(X)$ are the spanning trees, so that $|\Jac(X)|$ is the number of bases of $\calM(X)$.

The discrete analogue of a finite map of algebraic curves is a \emph{harmonic morphism} of graphs $p:\widetilde{X}\to X$ (introduced in~\cite{2000Urakawa}), which is required to satisfy a balancing condition~\eqref{eq:localbalancing} at the vertices of $\tX$ in terms of a local degree function $d_p:V(\tX)\to \ZZ_{\geq 1}$. Among other things, a harmonic morphism $p:\widetilde{X}\to X$ defines a surjective pushforward map $p_*:\Jac(\tX)\to \Jac(X)$ on the Jacobian groups. A natural question is to determine the order $|\Ker p_*|=|\Jac(\tX)|/|\Jac(X)|$ of the kernel, or, alternatively, to compare the number of spanning trees of $\tX$ and $X$. Since $|\Jac(X)|$ can be computed using either the Ihara zeta function $\zeta(s,X)$ or the graphic matroid $\calM(X)$, we may expect that appropriate generalizations determine the order of the kernel.

This question is difficult to approach in full generality, and we restrict our attention to two distinguished classes of harmonic morphisms. First, let $G$ be a finite group. A harmonic morphism $p:\tX\to X$ is said to be a \emph{Galois cover} with Galois group $G$ if $X$ is the quotient of $\tX$ by a $G$-action that is free on the edges (see Definition~\ref{def:Gcover} for a precise statement). In our paper, we generally further restrict our attention to the case when $G$ is abelian. Second, we say that a harmonic morphism $p:\tX\to X$ is \emph{free} if all local degrees of $p$ are equal to 1. A free harmonic morphism of graphs is the same as a covering space in the topological sense, and free Galois covers are those that are defined by free group actions. 

Let $p:\tX\to X$ be a free Galois cover with Galois group $G$, not necessarily abelian. In~\cite{1996StarkTerras} and~\cite{2000StarkTerras}, Stark and Terras defined the \emph{Artin--Ihara $L$-functions} $L(s,\tX/X,\rho)$ of the cover, indexed by representations $\rho$ of $G$, and showed that the Ihara zeta function $\zeta(\tX,s)$ factors as a product of the $L(s,\tX/X,\rho)$ over the irreducible representations. Since $\zeta(s,\tX)$ determines $|\Jac(\tX)|$, this gives a formula for $|\Ker p_*|$ in terms of the $L$-functions (see~\cite{2024HammerMattmanSandsVallieres}). On the other hand, in~\cite{2014ReinerTseng} Reiner and Tseng investigated the subgroup $\Ker p_*$ in detail, and found a matroidal formula for its order when the Galois group is $\ZZ/2\ZZ$ (this formula was rediscovered in the context of double covers of metric graphs in~\cite{2022LenZakharov}). In this case, the cover determines the structure of a \emph{signed graph} $\sigma:E(X)\to \{\pm 1\}$ on $X$, and the order of $\Ker p_*$ is a weighted sum over the bases of Zaslavsky's \emph{signed graphic matroid} $\calM(X,\sigma)$ (see Proposition 9.9 in~\cite{2014ReinerTseng}).

The main theorem of our paper is a matroidal formula for the order of $\Ker p_*$ in the case when $p:\tX\to X$ is an abelian Galois cover, not necessarily free. We employ the theory of Artin--Ihara $L$-functions for the proof, but the formula for $|\Ker p_*|$ is given in terms of weights of certain matroids $\calM(\tX/X,\rho)$ associated to the cover and a representation $\rho$ of the Galois group.

In order to state our results, we first recall how to construct abelian Galois covers. Free Galois covers of a fixed graph $X$ with abelian Galois group $G$ are described by elementary algebraic topology: any such cover $p:\tX\to X$ is given by an element $[\eta]\in H^1(X,G)$ of the simplicial cohomology group of $X$ with coefficients in $G$ (the element $\eta$ is called a \emph{$G$-voltage assignment} in the combinatorics literature). This description was generalized to harmonic abelian covers in~\cite{2024LenUlirschZakharov}. Given such a cover $p:\tX\to X$, the abelian Galois group $G$ acts on the fiber $p^{-1}(v)$ over each vertex $v\in V(X)$, and the stabilizer of any vertex in the fiber $p^{-1}(v)$ is a subgroup $D(v)\subseteq G$ depending only on $v$, called the \emph{dilation group} of $v$. The collection of dilation groups defines a \emph{$G$-dilation datum} $D=\{D(v):v\in V(X)\}$ on $X$, and isomorphism classes of harmonic covers with Galois group $G$ and fixed dilation datum $D$ are in bijection with a certain \emph{dilated cohomology group $H^1(X,D)$}.

On the other hand, Galois covers of graphs have a natural matroidal aspect. Given a free Galois cover $p:\tX\to X$ defined by a $G$-voltage assignment $[\eta]\in H^1(X,G)$, Zaslavsky defines in~\cite{1989Zaslavsky,1991Zaslavsky} a \emph{bias matroid} $\calM(X,\eta)$ on the set of edges of $X$ (for $G=\ZZ/2\ZZ$, this is Zaslavsky's signed graphic matroid of~\cite{1982Zaslavsky}). In this paper, we extend Zaslavsky's construction in two elementary ways. First, we reinterpret Zaslavsky's matroid geometrically in terms of the cover $p:\tX\to X$ and show how to define a matroid $\calM(\tX/X)$ on the edge set of $X$ when $p:\tX\to X$ is a harmonic abelian cover, not necessarily free. Second, we twist the definition of $\calM(\tX/X)$ by a character $\rho$ of $G$ to obtain a collection of matroids $\calM(\tX/X,\rho)$ indexed by the $|G|-1$ nontrivial characters of $G$. It is more convenient to work with the dual matroids $\calM^*(\tX/X,\rho)$, and we equip their bases with certain \emph{weights} (which are valued, generally speaking, in the real cyclotomic field). The \emph{weight} $w(\calM^*(\tX/X,\rho))$ of the matroid is the sum of the weights of the bases.

We are now ready to state our main result.

\begin{theorem}
    Let $p:\tX\to X$ be a harmonic Galois cover of graphs with abelian Galois group $G$ of order $N$. The number of spanning trees of $\tX$ is equal to 
    \begin{equation}
    |\Jac(\tX)|=\frac{1}{N}\prod_{v\in V(X)}|D(v)|^{N/|D(v)|}|\Jac(X)|\prod_{\rho\in \widehat{G}'}w(\calM^*(\tX/X,\rho)), 
    \end{equation}
    where $D(v)$ is the dilation group at the vertex $v\in V(X)$, $|\Jac(X)|$ is the number of spanning trees of $X$, $\widehat{G}'$ is the set of nontrivial characters of $G$, and $w(\calM^*(\tX/X,\rho))$ is the weight of the matroid $\calM^*(\tX/X,\rho)$.
    \label{thm:mainintro}
\end{theorem}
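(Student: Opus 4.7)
The plan is to combine three ingredients: the Stark--Terras factorization of the Ihara zeta function into Artin--Ihara $L$-functions (extended to harmonic abelian covers), a class number formula expressing $|\Jac(X)|$ as a leading coefficient of $\zeta(s,X)^{-1}$ at $s=1$, and a twisted matrix-tree theorem identifying each $L$-function with the weight of the matroid $\calM^*(\tX/X,\rho)$. The idea is that the factorization of zeta functions induces, via class number formulas, a factorization of $|\Jac(\tX)|$ into $|\Jac(X)|$ times a product of matroid weights, with the dilation correction $\frac{1}{N}\prod_v |D(v)|^{N/|D(v)|}$ encoding the discrepancy between the zeta-theoretic and Jacobian-theoretic normalizations in the non-free harmonic setting.

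The first step is to establish that $\zeta(s,\tX) = \prod_{\rho \in \widehat{G}} L(s,\tX/X,\rho)$ in the harmonic setting, with $L(s,\tX/X,\rho_0) = \zeta(s,X)$ for the trivial character $\rho_0$. For free Galois covers this is classical Stark--Terras; in the harmonic abelian case, the decomposition should follow by decomposing the harmonic-cover edge/vertex spaces according to characters of $G$ and applying a Bass-type determinantal expression $L(s,\tX/X,\rho)^{-1} = (1-s^2)^{r_\rho}\det(I - A_\rho s + Q_\rho s^2)$, where $A_\rho$ and $Q_\rho$ are the $\rho$-twisted adjacency and degree operators on $X$. The key observation is that in the harmonic case, $A_\rho$ and $Q_\rho$ also depend on the dilation datum $D$, since the local degree $d_p$ is constant on $G$-orbits and so descends to data on $X$.

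The second step applies the class number formula: $|\Jac(X)|$ equals a fixed normalization constant times the leading coefficient of $\zeta(s,X)^{-1}$ at $s=1$, and similarly for $\tX$. Taking the ratio and substituting the factorization yields
\begin{equation*}
\frac{|\Jac(\tX)|}{|\Jac(X)|} = C(\tX/X) \cdot \prod_{\rho \in \widehat{G}'} \left. L(s,\tX/X,\rho)^{-1}\right|_{s=1}^{\mathrm{lead}},
\end{equation*}
where $C(\tX/X)$ is a combinatorial constant depending on $|V(\tX)|$, $|V(X)|$, $|E(\tX)|$, $|E(X)|$ and the Euler characteristics. Computing $C(\tX/X)$ using $|V(\tX)| = \sum_{v\in V(X)} N/|D(v)|$ and $|E(\tX)| = N\cdot|E(X)|$ (edges have trivial stabilizer in a $G$-cover) produces precisely the dilation factor $\frac{1}{N}\prod_v |D(v)|^{N/|D(v)|}$.

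The third step identifies each evaluation $L(1,\tX/X,\rho)^{-1}$ (or the relevant leading coefficient) with the matroid weight $w(\calM^*(\tX/X,\rho))$. The twisted Bass determinant at $s=1$ reduces to the determinant of a twisted Laplacian on $X$, and expanding this determinant by a weighted Cauchy--Binet / matrix-tree argument expresses it as a sum over bases of $\calM^*(\tX/X,\rho)$ with the prescribed weights. The main obstacle is carrying the dilation datum through all three steps consistently, in particular verifying that the twisted matrix-tree expansion produces exactly the weighted bases of the dual matroid $\calM^*(\tX/X,\rho)$ as defined in the paper; the bookkeeping of vertex dilation versus edge twisting, and the distinction between \emph{cycles} that are balanced versus unbalanced under $\rho$, is the technically delicate piece. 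Once this matroidal identification is in place, the three factorizations combine to give the stated formula.
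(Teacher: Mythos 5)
Your outline of the free case is essentially the paper's argument: Stark--Terras factorization of the zeta function, Northshield's class number formula at $s=1$, and a Cauchy--Binet expansion of the twisted Laplacian identifying the leading coefficient of $L(s,\tX/X,\rho)^{-1}$ with the matroid weight. For free covers this works and the constant $\frac{1}{N}$ does fall out of the ratio $(g-1)/(g(\tX)-1)=(g-1)/(N(g-1))$.

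The gap is in the non-free case, which is the actual content of the theorem. Your Step 1 asserts that the $L$-function factorization "should follow" for harmonic covers with dilation, but this is precisely what is not available: at a dilated vertex $v$ each half-edge has $|D(v)|$ lifts through any given preimage vertex, so path lifting is not unique, the Frobenius element $\eta(P)$ of a prime is only defined modulo the dilation groups along $P$, and the Euler product defining $L(s,\tX/X,\rho)$ does not make sense without substantial modification. No determinant formula or factorization theorem for such objects is established, and none is cited. Your Step 2 is then concretely wrong as stated: the claim that the dilation factor $\frac{1}{N}\prod_v|D(v)|^{N/|D(v)|}$ arises as a combinatorial constant determined by vertex/edge counts and Euler characteristics fails already for the $\ZZ/6\ZZ$-cover of the dumbbell graph in the paper, where $g(X)=2$, $g(\tX)=14$, so the class-number-formula constants contribute $(g-1)/(g(\tX)-1)=1/13$, while the required factor is $\frac{1}{6}\cdot 2^{3}\cdot 3^{2}=12$; the discrepancy would have to be absorbed into leading coefficients of $L$-functions you have not defined. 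The paper avoids all of this by proving a polynomial identity $J_{\tX}=\frac{1}{N}\prod_v|D(v)|^{N/|D(v)|}J_X\prod_\rho P_{\tX/X,\rho}$ for free covers only, then realizing every harmonic abelian cover as an edge contraction of a free one (Lemma~\ref{lem:contraction}) and tracking how each factor degenerates under contracting a loop: the dilation factors arise from spanning-tree counts of the cyclic preimage graphs and from products $\prod|1-\rho(\eta_0)|^2$ over the characters nontrivial on the new dilation group, not from Euler-characteristic bookkeeping. To repair your argument you would either need to build the dilated $L$-function theory from scratch, or adopt some version of the contraction/resolution step --- which in turn forces you to upgrade from the numbers $|\Jac(\tX)|$, $w(\calM^*(\tX/X,\rho))$ to the polynomials $J_{\tX}(x_e)$, $P_{\tX/X,\rho}(x_e)$, since the contraction is implemented by extracting the lowest-order coefficient in the contracted edge length.
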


\begin{example} \label{ex:icosahedron1} We compute the number of spanning trees of the icosahedral graph $\tX$. The generator of the group $G=\ZZ/5\ZZ$ acts on $\tX$ by a rotation by $2\pi/5$ about an axis passing through two opposite vertices. The quotient graph $X$ is shown on Figure~\ref{fig:icosahedron} and has $|\Jac(X)|=2$ spanning trees. The dilation groups are $D(v_1)=D(v_4)=\ZZ/5\ZZ$ on the outer vertices of $X$ and are trivial on the inner vertices. The group $\ZZ/5\ZZ$ has four nontrivial characters
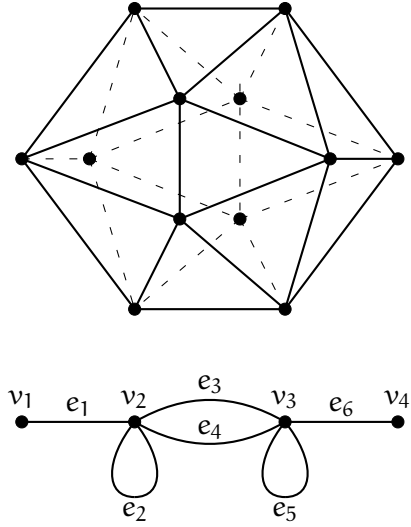
\begin{figure}
    \centering
    \begin{tikzpicture}

    \node at (4.5,0.3) {$v_1$};
    \node at (6,0.3) {$v_2$};
    \node at (8,0.3) {$v_3$};
    \node at (9.5,0.3) {$v_4$};

    \draw[fill](4.5,0) circle(.08);
    \draw[fill](6,0) circle(.08);
    \draw[fill](8,0) circle(.08);
    \draw[fill](9.5,0) circle(.08);

    \draw[thick] (4.5,0) -- (6,0);
    \draw[thick] (8,0) -- (9.5,0);
    \draw[bend left, thick] (6,0) to (8,0);
    \draw[bend right, thick] (6,0) to (8,0);
    \draw [thick] (6,0) .. controls (5.6,-0.5) and (5.6,-1) .. (6,-1);
    \draw [thick] (6,0) .. controls (6.4,-0.5) and (6.4,-1) .. (6,-1);
    \draw [thick] (8,0) .. controls (7.6,-0.5) and (7.6,-1) .. (8,-1);
    \draw [thick] (8,0) .. controls (8.4,-0.5) and (8.4,-1) .. (8,-1);
    
    \node at (5.25,0.2) {$e_1$};
    \node at (6,-1.2) {$e_2$};
    \node at (7,0.5) {$e_3$};
    \node at (7,-.1) {$e_4$};
    \node at (8,-1.2) {$e_5$};
    \node at (8.75,0.2) {$e_6$};

    \begin{scope}[shift={(0,3.5)}]

        \draw[fill](4.5,0) circle(.08);

        \draw[fill](6,2) circle(.08);
        \draw[fill](6,-2) circle(.08);
        \draw[fill](6.6,0.8) circle(.08);
        \draw[fill](6.6,-0.8) circle(.08);
        \draw[fill](5.4,0) circle(.08);
        
        \draw[fill](8,2) circle(.08);
        \draw[fill](8,-2) circle(.08);
        \draw[fill](7.4,0.8) circle(.08);
        \draw[fill](7.4,-0.8) circle(.08);
        \draw[fill](8.6,0) circle(.08);

        \draw[fill](9.5,0) circle(.08);
        
        \draw[thick] (4.5,0) -- (6.6,0.8);
        \draw[thick] (4.5,0) -- (6.6,-0.8);
        \draw[thick] (6.6,0.8) -- (6.6,-0.8);
        \draw[thick] (4.5,0) -- (6,2);
        \draw[thick] (4.5,0) -- (6,-2);
        \draw[thick] (6,2) -- (6.6,0.8);
        \draw[thick] (6,-2) -- (6.6,-0.8);

        \draw[thick] (8,2) -- (6,2);
        \draw[thick] (8,-2) -- (6,-2);
        \draw[thick] (6.6,0.8) -- (8.6,0);
        \draw[thick] (6.6,-0.8) -- (8.6,0);
        \draw[thick] (6.6,0.8) -- (8,2);
        \draw[thick] (6.6,-0.8) -- (8,-2);
        
        \draw[thick] (9.5,0) -- (8.6,0);
        \draw[thick] (8.6,0) -- (8,2);
        \draw[thick] (8,2) -- (9.5, 0);
        \draw[thick] (8,-2) -- (9.5, 0);
        \draw[thick] (8,-2) -- (8.6,0);
        
        \draw[loosely dashed] (4.5,0) -- (5.4,0);
        \draw[loosely dashed] (5.4,0) -- (6,2);
        \draw[loosely dashed] (5.4,0) -- (6,-2);
        \draw[loosely dashed] (5.4,0) -- (7.4,0.8);
        \draw[loosely dashed] (5.4,0) -- (7.4,-0.8);
        \draw[loosely dashed] (7.4,0.8) -- (6,2);
        \draw[loosely dashed] (7.4,0.8) -- (8,2);
        \draw[loosely dashed] (7.4,0.8) -- (9.5,0);
        \draw[loosely dashed] (7.4,-0.8) -- (9.5,0);
        \draw[loosely dashed] (7.4,1) -- (7.4,-0.8);
        \draw[loosely dashed] (7.4,-0.8) -- (6,-2);
        \draw[loosely dashed] (7.4,-0.8) -- (8,-2);

    \end{scope}
\end{tikzpicture}
    \caption{The quotient of an icosahedron by a $2\pi/5$ rotation}
    \label{fig:icosahedron}
\end{figure}
\[
\rho_j:\ZZ/5\ZZ\to \CC^*,\quad \rho_j(k)=e^{2\pi ijk/5},\quad j=1,\ldots,4.
\]
The weights of the four corresponding matroids are calculated in Example~\ref{ex:icosahedron3} and are equal to
\[
w(\calM^*(\tX/X,\rho_j))=\begin{cases}
    30-6\sqrt{5}, & j=1,4,\\
    30+6\sqrt{5}, & j=2,3.
\end{cases}
\]
Hence by Theorem~\ref{thm:mainintro} the number of spanning trees of the icosahedral graph is
\[
|\Jac(\tX)|=\frac{1}{5}\cdot 5^1\cdot 1^5\cdot 1^5\cdot 5^1\cdot 2\cdot (30-6\sqrt{5})^2\cdot(30+6\sqrt{5})^2=5184000.
\]

\end{example}

To prove Theorem~\ref{thm:mainintro}, we first upgrade the critical number $|\Jac(X)|$ of a graph to the \emph{Jacobian polynomial} $J_X(x_e)$, whose variables are indexed by the edges of $X$ and whose terms are monomials corresponding to the bases of $\calM(X)$. We similarly replace $|\Jac(\tX)|$ and the weights $w(\calM^*(\tX/X,\rho))$ by polynomials $J_{\tX}(x_e)$ and $P_{\tX/X,\rho}(x_e)$, the latter having coefficients in the real cyclotomic field. We then show in Theorem~\ref{thm:main1} that the polynomial $J_{\tX}$ factors in terms of $J_X$ and the $P_{\tX/X,\rho}$ in the same way as number $|\Jac(\tX)|$ does. We first prove Theorem~\ref{thm:main1} for free covers using the theory of metric $L$-functions, and then derive the non-free case by using an edge contraction procedure (which is the principal reason why we introduce the polynomials). Theorem~\ref{thm:mainintro} is then obtained by simply plugging in $x_e=1$ for all edges $e\in E(X)$.

The paper is organized as follows. In Section~\ref{sec:review} we review the necessary definitions concerning graphs and their Galois covers. In particular, we recall the cohomological description of non-free abelian covers developed in~\cite{2024LenUlirschZakharov}. In Section~\ref{sec:zeta} we recall the relationship between the Ihara zeta function and the graph Jacobian and prove Proposition~\ref{prop:s=1zeta}, which expresses the Jacobian polynomial of a graph in terms of a metric generalization of the zeta function. We recall Zaslavsky's construction of the bias matroid in Section~\ref{sec:matroid}, and in Section~\ref{sec:main} we state and prove the main results and discuss several examples.

\section{Graphs, harmonic morphisms, and Galois covers} \label{sec:review}

\subsection{Graphs and harmonic morphisms} We begin by reviewing a number of basic definitions relating to graphs and their morphisms. We use Serre's definition of a graph (see~\cite{2002Serre}), which is particularly convenient for considering group actions and quotients. 

A \emph{graph} $X$ consists of the following data:
\begin{enumerate}
    \item A finite set of \emph{vertices} $V(X)$.
    \item A finite set of \emph{half-edges} $H(X)$.
    \item A fixed-point-free involution $H(X)\to H(X)$ denoted by $h\mapsto \oh$.
    \item A root map $r:H(X)\to V(X)$.
\end{enumerate}
An \emph{edge} $e=\{h,\oh\}$ of $X$ is an orbit of the involution, and the set of edges is denoted by $E(X)$. The \emph{root vertices} of an edge $e=\{h,\oh\}$ are $r(h)$ and $r(\oh)$, and an edge is called a \emph{loop} if its root vertices coincide. We allow \emph{multiedges}, in other words distinct edges may have the same root vertices. An \emph{oriented edge} $e=(h,\oh)$ is an edge together with a choice of ordering of its half-edges, which determines the \emph{source} and \emph{target} vertices $s(e)=r(h)$ and $t(e)=r(\oh)$. We denote the opposite orientation by $\oe=(\oh,h)$. An \emph{orientation} on a graph $X$ is a choice of orientation for every edge, including the loops, and determines global source and target maps $s:E(X)\to V(X)$ and $t:E(X)\to V(X)$. We consider only finite graphs, and all graphs are connected unless stated otherwise. 

The \emph{tangent space} to a vertex $v\in V(X)$ is
\[
T_v(X)=\{h\in H(X):r(h)=v\},
\]
and the \emph{valency} $\val(v)=|T_v(X)|$ is the number of incoming edges, with loops counted twice. The \emph{genus} of a graph is
\[
g(X)=|E(X)|-|V(X)|+1;
\]
this is not to be confused with the minimum genus of an orientable surface into which $X$ embeds. 

A \emph{morphism} of graphs $f:\tX\to X$ is a pair of maps $f:V(\tX)\to V(X)$ and $f:H(\tX)\to H(X)$ that are equivariant with respect to the involution and root maps. A morphism of graphs maps $f:\tX\to X$ induces a map on the edges $f:E(\tX)\to E(X)$, so in our definition edges may not be contracted to vertices. A morphism of graphs $f:\tX\to X$ is called \emph{harmonic} if there exists a \emph{local degree} function $d_f:V(\tX)\to \ZZ_{>0}$ satisfying the \emph{local balancing condition}: for any vertex $\tv\in V(\tX)$ and any half-edge $h\in T_{f(\tv)}X$ we have
\begin{equation}
\label{eq:localbalancing} 
d_f(\tv)=\left|T_{\tv}(\tX)\cap f^{-1}(h)\right|.
\end{equation}
If $X$ is connected, then a harmonic morphism $f:\tX\to X$ has a \emph{global degree} given by
\[
\deg f=\sum_{\tv\in f^{-1}(v)}d_f(\tv)=|f^{-1}(e)|
\]
for any vertex $v\in V(X)$ or any edge $e\in E(X)$. A harmonic morphism $f$ is called \emph{free} if $d_f(\tv)=1$ for every vertex $\tv\in V(\tX)$. The local balancing condition implies that a free harmonic morphism is a local homeomorphism near every vertex, hence a free harmonic morphism of graphs is the same thing as a covering in the topological sense.

\subsection{Group actions and harmonic $G$-covers} We now recall the theory of harmonic $G$-covers of a graph with abelian Galois group $G$, which was developed in~\cite{2024LenUlirschZakharov}. These covers are a generalization of free Galois covers, described by covering space theory, and are obtained from arbitrary (not necessarily free) group actions on graphs.

Let $X$ be a graph. An \emph{automorphism} of $X$ is an invertible graph morphism $f:X\to X$. An automorphism is harmonic with local degree $d_f(v)=1$ for all $v\in V(X)$ and global degree $\deg f=1$. Conversely, any harmonic morphism of global degree equal to $1$ is an automorphism. The automorphisms of a graph $X$ form a group denoted by $\Aut(X)$. An automorphism of a graph may \emph{flip edges}, in other words may exchange the two half-edges comprising an edge, fixing the edge but exchanging its root vertices. We cannot take the quotient of a graph by such an automorphism\footnote{A quotient by an automorphism that flips edges may be taken in the category of \emph{graphs with legs}, where a leg is a half-edge that is fixed by the involution. We do not develop this perspective here, and refer the interested reader to~\cite{2023MeyerZakharov}.}, so we exclude them from consideration.

\begin{definition} Let $X$ be a graph and let $G$ be a finite group. A \emph{left $G$-action} on $X$ is a homomorphism $G\to \Aut(X)$ such that, for every $g\in G$ and every edge $e=\{h,\oh\}\in E(X)$, one of the following holds:
\begin{enumerate}
    \item $g(h)=h$, hence $g(\oh)=\oh$ and therefore $g(e)=e$.
    \item $g(h)\notin\{h,\oh\}$, and therefore $g(e)\neq e$.
\end{enumerate}
\end{definition}

Given a graph $X$ with a left $G$-action, we define the \emph{quotient graph} $X/G$ to be the set of orbits of the $G$-action. Specifically, the vertices and half-edges of $X/G$ are the orbit sets
\[
V(X/G)=V(X)/G, \quad H(X/G)=H(X)/G,
\]
and the root $H(X/G)\to V(X/G)$ and involution $H(X/G)\to H(X/G)$ maps on $X/G$ are induced from those on $X$. Since the action does not flip edges, the involution map on $X/G$ has no fixed points, therefore $X/G$ is a graph. 

We now consider two distinguished classes of group actions.
\begin{definition} Let $X$ be a graph with a left $G$-action.
\begin{enumerate}
    \item We say that the $G$-action is \emph{free on the edges} if $g(h)\neq h$ for all $h\in H(X)$ and all nontrivial elements $g\in G$ (equivalently, if $g(e)\neq e$ for all $e\in E(X)$ and all nontrivial $g\in G$).
    \item We say that the $G$-action is \emph{free} if $g(v)\neq v$ for all $v\in V(X)$ and all nontrivial $g\in G$.
    
\end{enumerate}   
\end{definition}

Let $X$ be a graph with a left $G$-action. The canonical projection $p:X\to X/G$ sending each vertex and half-edge to its orbit is a graph morphism, but it is not in general harmonic. However, if the action is free on the edges, then $p$ is harmonic if we set the local degrees to be 
\[
d_p(v)=|\Stab(v)|=|\{g\in G:g(v)=v\}|.
\]
for all $v\in V(X)$. The orbit-stabilizer theorem implies the local balancing condition~\eqref{eq:localbalancing} at every vertex of $G$ and that $p$ is a harmonic morphism of global degree $\deg p=|G|$. If in addition the $G$-action is free, then $d_p(v)=1$ for all $v\in V(X)$ and $p$ is a covering space. 

We now consider the morphism $p$ from the perspective of the quotient graph. 

\begin{definition} Let $X$ be a graph and let $G$ be a finite group. A \emph{Galois cover} of $X$ with Galois group $G$, or \emph{harmonic $G$-cover} for short, is a harmonic morphism $p:\tX\to X$ of degree $\deg p=|G|$ together with a $G$-action on $\tX$ such that the following conditions hold:
\begin{enumerate}
    \item The $G$-action is equivariant: for every $g\in G$, $p(g(\te))=\te$ for each edge $\te\in E(\tX)$ and $p(g(\tv))=\tv$ as well as $d_p(g(\tv))=d_p(\tv)$ for each vertex $\tv\in V(\tX)$.
    \item For each $v\in V(X)$, $G$ acts transitively on the fiber $p^{-1}(v)$, and for each $e\in E(X)$, $G$ acts transitively and freely on the fiber $p^{-1}(e)$.
\end{enumerate}
A harmonic $G$-cover is called \emph{free} if it is free as a harmonic morphism, equivalently, if the $G$-action is free on the vertices as well as the edges. The \emph{trivial $G$-cover} is the free disconnected cover $p:X\times G\to X$, where $p$ is projection onto the first factor. An \emph{isomorphism of harmonic $G$-covers} $p:\tX\to X$ and $p':\tX'\to X$ is a $G$-equivariant isomorphism of graphs $i:\tX\to \tX'$ such that $p=p'\circ i$.
\label{def:Gcover}
\end{definition}

It is clear that the two objects defined above are the same. Indeed, if $X$ is a graph with a left $G$-action that is free on the edges, then the quotient map $p:X\to X/G$ is a harmonic $G$-cover. Conversely, if $p:\tX\to X$ is a harmonic $G$-cover, then there is an isomorphism $X\simeq\tX/G$ whose composition with $f$ is the quotient morphism $\tX\to \tX/G$. Furthermore, free $G$-actions correspond to free harmonic $G$-covers (having local degrees $d_p(\tv)=1$ for all $\tv\in V(\tX)$), which explains the choice of terminology.

\begin{example} Consider the graph $\tX$ with two vertices connected by two edges. The automorphism group $\Aut(\tX)$ is the Klein $4$-group generated by $g_1$, which fixes the vertices and exchanges the edges, and $g_2$, which fixes the edges and exchanges the vertices (hence flips both edges). Let $G$ be the subgroup generated by $g_1$, then the quotient $X=\tX/G$ is a graph with two vertices and one edge, and the quotient map $p:\tX\to X$ is a harmonic morphism of degree two (see Figure~\ref{fig:degreetwo}). 

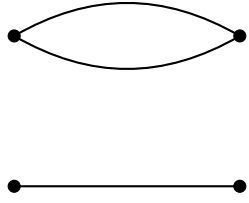
\begin{figure}
    \centering
\begin{tikzpicture}

    \draw[fill](3,0) circle(.08);
    \draw[thick] (3,0) -- (6,0);
    \draw[fill](6,0) circle(.08);
    
\begin{scope}[shift={(0,2)}]

    \draw[fill](3,0) circle(.08);
    \draw[bend left, thick] (3,0) to (6,0);
    \draw[bend right, thick] (3,0) to (6,0);
    \draw[fill](6,0) circle(.08);

\end{scope}
\end{tikzpicture}
    \caption{Harmonic quotient of degree two}
    \label{fig:degreetwo}
\end{figure}
    
\end{example}

    

\subsection{Abelian $G$-covers and dilated cohomology} We now restrict our attention to the case when the group $G$ is abelian\footnote{The description of $G$-covers given in this subsection holds for non-abelian groups as well, provided that one works with non-abelian simplicial cohomology sets.}, and switch to additive group notation. The set of harmonic $G$-covers of a given graph $X$ was given a cohomological interpretation in~\cite{2024LenUlirschZakharov}. This description generalizes the classical description of free $G$-covers in terms of covering space theory, which we recall first. 

Let $p:\tX\to X$ be a free $G$-cover. For each vertex $v\in V(X)$ we choose an equivariant identification of the fiber $p^{-1}(v)$ with $G$ as a set, so that the action is the group operation:
\begin{equation}
\label{eq:freevertices}
p^{-1}(v)=\{\tv_g:g\in G\},\quad g(\tv_{g'})=\tv_{g+g'},
\end{equation}
Now choose an orientation on $X$, and let $s,t:E(X)\to V(X)$ be the source and target maps. For each edge $e\in E(X)$, we equivariantly identify the fiber $p^{-1}(e)$ with $G$ by using the source map $s:p^{-1}(e)\to p^{-1}(s(e))$. The target map $t:p^{-1}(e)\to p^{-1}(t(e))$ is then an equivariant map of $G$-torsors, hence is given by adding a certain element $\eta_e\in G$. In other words,
\begin{equation}
\quad p^{-1}(e)=\{\te_g:g\in G\},\quad g(\te_{g'})=\te_{g+g'},\quad s(\te_g)=\widetilde{s(e)}_g,\quad t(\te_g)=\widetilde{t(e)}_{g+\eta_e}.
\label{eq:freeedges}
\end{equation}
The collection of elements $\{\eta_e\}_{e\in E(X)}$ is called the \emph{$G$-voltage assignment} associated to the free $G$-cover $p$, and depends on the choices of identifications of the vertex fibers $p^{-1}(v)$ with $G$. Choosing different identifications corresponds to a \emph{vertex switching}, which can be conveniently described in terms of simplicial cohomology. Let
\[
C^0(X,G)=G^{V(X)},\quad C^1(X,G)=G^{E(X)}
\]
be the free abelian groups on the vertices and edges of $X$, respectively. We view elements of $G^{V(X)}$ and $G^{E(X)}$ as functions $\xi:V(X)\to G$ and $\eta:E(X)\to G$, respectively. The source and target maps have duals $s^*,t^*:G^{V(X)}\to G^{E(X)}$ given by
\[
s^*(\xi)(e)=\xi(s(e)), \quad t^*(\xi)(e)=\xi(t(e)),
\]
and the first simplicial cohomology group $H^1(X,G)$ is the cokernel of the \emph{coboundary map}:
\begin{equation}
\delta^*:C^0(X,G)\to C^1(X,G),\quad \delta^*=t^*-s^*,\quad H^1(X,G)=G^{E(X)}/\Im \delta^*.
\label{eq:coboundaryfree}
\end{equation}
The $G$-voltage assignment $\eta\in C^1(X,G)$ of a free $G$-cover $p:\tX\to X$ defines a cocycle $[\eta]\in H^1(X,G)$ that does not depend on any choices, and two free $G$-covers $p:\tX\to X$ and $p':\tX'\to X$ with $G$-voltage assignments $\eta$ and $\eta'$ are isomorphic if and only if $[\eta]=[\eta']$ in $H^1(X,G)$. Conversely, given a simplicial cocycle $[\eta]\in H^1(X,G)$ and a choice of representative $\eta\in  C^1(X,G)$, we can construct a free $G$-cover $p:\tX\to X$ having $G$-voltage assignment $\eta$ using~\eqref{eq:freevertices} and~\eqref{eq:freeedges}.

This cohomological description of $G$-covers was generalized from the free to the harmonic case in the paper~\cite{2024LenUlirschZakharov} (the interested reader is referred to the original arXiv version, which was substantially shortened for publication). In a nutshell, we modify the definition of simplicial cohomology by taking appropriate quotients by the vertex stabilizer subgroups. Given a harmonic $G$-cover $p:\tX\to X$, we first record the data of the stabilizers.

\begin{definition} Let $p:\tX\to X$ be a harmonic $G$-cover, where $G$ is abelian. The \emph{dilation subgroup} of a vertex $v\in V(X)$ is
\[
D(v)=\{g\in G:g(\tv)=\tv\mbox{ for all }\tv\in p^{-1}(v)\}.
\]
We say that a vertex $v\in V(X)$ is \emph{free} if $D(v)$ is the trivial subgroup and \emph{dilated} otherwise. The \emph{$G$-dilation datum} of $p$ is the collection of subgroups $D(v)\subseteq G$ for all $v\in V(X)$. 

\end{definition}
Note that, since $G$ is abelian, the dilation subgroup $D(v)$ of a vertex $v\in V(X)$ is the stabilizer of any (and hence every) preimage vertex $\tv\in p^{-1}(v)$. We now consider a harmonic $G$-cover $p:\tX\to X$ with $G$-dilation datum $\{D(v)\}_{v\in V(X)}$, and choose an orientation on $X$. Consider the following groups:
\[
C^0(X,D)=\bigoplus_{v\in V(X)}G/D(v),\quad C^1(X,D)=
\bigoplus_{e\in E(X)}G/(D(t(e))+D(s(e))).
\]
An element $\eta\in C^1(X,D)$ may be thought of as a \emph{dilated $G$-voltage assignment} on $X$, in other words a choice of an element $\eta(e)\in G$ along every edge $e\in E(X)$, modulo the dilation subgroups $D(t(e))$ and $D(s(e))$ at the root vertices. As before, the source and target maps have duals 
\[
s^*_D,t^*_D:C^0(X,D)\to C^1(X,D),\quad s^*_D(\xi)(e)=s_e(\xi(s(e))), \quad t^*_D(\xi)(e)=t_e(\xi(t(e))),
\]
where for $e\in E(X)$ we denote by
\[
s_e:G/D(s(e))\to G/(D(s(e))+D(t(e))),\quad t_e:G/D(t(e))\to G/(D(s(e))+D(t(e)))
\]
the natural quotient maps.

\begin{definition} The \emph{dilated cohomology group} $H^1(X,D)$ is the cokernel of the coboundary map
\[
\delta_D^*:C^0(X,D)\to C^1(X,D),\quad \delta^*_D=t^*_D-s^*_D,\quad H^1(X,D)=C^1(X,D)/\Im\delta^*_D.
\]
\end{definition}

The class $[\eta]\in H^1(X,D)$ of a dilated $G$-voltage assignment $\eta\in C^1(X,D)$ is its \emph{vertex switching equivalence} class. Just as in the free case, this describes the set of isomorphism classes of $G$-covers\footnote{The paper~\cite{2024LenUlirschZakharov} deals with the more general situation where the $G$-action may have nontrivial edge stabilizers.}.

\begin{theorem}[Theorem 2.3 in~\cite{2024LenUlirschZakharov}] Let $X$ be a graph, let $G$ be a finite abelian group, and let $D=\{D(v)\subseteq G\}_{v\in V(X)}$ be a $G$-dilation datum on $X$. There is a natural one-to-one correspondence between harmonic $G$-covers $p:\tX\to X$ having $G$-dilation datum $D$ and elements of $H^1(X,D)$.
\label{thm:LUZ}
\end{theorem}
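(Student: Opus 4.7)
The plan is to generalize the cohomological classification of free $G$-covers recalled just before the theorem, replacing at each vertex $v$ the $G$-torsor fiber by the $G$-set $G/D(v)$. Fix an orientation on $X$; we construct mutually inverse maps between isomorphism classes of harmonic $G$-covers with dilation datum $D$ and elements of $H^1(X,D)$.

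\emph{Forward direction.} Given a harmonic $G$-cover $p:\tX \to X$ with dilation datum $D$, choose a basepoint $\widetilde{v}_0 \in p^{-1}(v)$ for each $v \in V(X)$. Since $G$ is abelian, the stabilizer of every point of $p^{-1}(v)$ is $D(v)$, so the orbit map descends to a $G$-equivariant bijection $G/D(v) \xrightarrow{\sim} p^{-1}(v)$. For each oriented edge $e$ with $v = s(e)$ and $w = t(e)$, the local balancing condition~\eqref{eq:localbalancing} gives $|s^{-1}(\widetilde{v}_0) \cap p^{-1}(e)| = d_p(\widetilde{v}_0) = |D(v)|$, so we may pick some $\te_0$ in this preimage, unique up to the $D(v)$-action. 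Writing $t(\te_0) = \eta_e(\widetilde{w}_0)$ for some $\eta_e \in G$, the stabilizer of $\widetilde{w}_0$ introduces an ambiguity of $D(w)$ while the choice of $\te_0$ introduces an ambiguity of $D(v)$, yielding a canonical element $\eta_e \in G/(D(s(e))+D(t(e)))$. Changing the basepoints by $\widetilde{v}_0 \mapsto g_v(\widetilde{v}_0)$ shifts $\eta_e$ by $g_{t(e)} - g_{s(e)}$, which is precisely $\delta^*_D$; hence the class $[\eta] \in H^1(X,D)$ is an isomorphism invariant of $(\tX, p)$.

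\emph{Inverse direction.} Given a representative $\eta \in C^1(X,D)$ with a lift to $G$ fixed on each edge, define $V(\tX) = \bigsqcup_{v \in V(X)} G/D(v)$, and for each edge $e$ introduce $|G|$ edges $\{\te_g : g \in G\}$ with source $[g] \in G/D(s(e))$ and target $[g + \eta_e] \in G/D(t(e))$, with $G$ acting by translation on the index $g$. The projection $p : \tX \to X$ is harmonic with local degree $d_p(\widetilde{v}) = |D(v)|$ for $\widetilde{v} \in p^{-1}(v)$, because the preimages of the source half-edge of $e$ at $[g] \in G/D(s(e))$ form the $D(s(e))$-orbit of $\te_g$, and similarly at the target. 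Modifying a lift of $\eta_e$ by an element of $D(s(e))+D(t(e))$ is absorbed by a translation of the edges $\te_g$, and modifying $\eta$ by a coboundary $\delta^*_D \xi$ is implemented by translating each vertex fiber by $\xi(v)$; both operations produce isomorphic covers.

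\emph{Main obstacle.} The principal work is the careful bookkeeping of coset ambiguities: in the forward direction, verifying that the ambiguities in $\te_0$ (a $D(s(e))$-orbit) and in $\widetilde{w}_0$ (a $D(t(e))$-orbit) combine to give exactly the quotient $G/(D(s(e))+D(t(e)))$ defining $C^1(X,D)$; in the inverse direction, checking the local balancing condition at every vertex of the constructed $\tX$, and verifying that every harmonic $G$-cover with the prescribed dilation datum arises this way up to isomorphism. Once these checks are in place, the two constructions are visibly inverse to each other, establishing the bijection.
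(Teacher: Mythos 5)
Your proposal is correct, and it is worth noting that the paper does not actually prove this statement: it cites Theorem 2.3 of~\cite{2024LenUlirschZakharov}, whose proof identifies harmonic $G$-covers with torsors over a constructible sheaf of abelian groups on $X$, and then only records the explicit model of the cover attached to a class $[\eta]$, namely~\eqref{eq:dilatedvertexfiber}--\eqref{eq:dilatedtargetfiber}. Your inverse construction reproduces that model exactly; what you add is the forward direction (choosing basepoints $\tv_0\in p^{-1}(v)$, using that $G$ is abelian so every point of $p^{-1}(v)$ has stabilizer $D(v)$, and reading off $\eta_e$ from the target of a chosen lift $\te_0$, well-defined modulo $D(s(e))+D(t(e))$ because $\te_0$ is ambiguous up to the $D(s(e))$-orbit and the target basepoint up to $D(t(e))$) together with the check that basepoint changes act exactly by $\delta^*_D$. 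This gives a self-contained, elementary bijection proof in the abelian case, at the cost of some bookkeeping; the sheaf-theoretic route of the cited paper is less hands-on but scales better, e.g.\ to covers with nontrivial edge stabilizers and, via non-abelian cohomology sets, to non-abelian $G$. Two small points you flag but should make explicit to close the argument: the identity $d_p(\tv_0)=|D(v)|$ follows from $\deg p=|G|$ together with equivariance of the local degrees, so the balancing condition~\eqref{eq:localbalancing} really does say that the edges of $p^{-1}(e)$ with source $\tv_0$ form a single free $D(s(e))$-orbit; and mutual inverseness is witnessed concretely by the map $\tv_{[g]}\mapsto g(\tv_0)$, $\te_g\mapsto g(\te_0)$, whose compatibility with the source and target maps is a one-line computation.
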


This theorem is proved by identifying harmonic $G$-covers with torsors over an appropriate constructible sheaf of abelian groups on $X$. We instead give an explicit description of the harmonic $G$-cover $p:\tX\to X$ corresponding to the equivalence class $[\eta]\in H^1(X,D)$ of a dilated $G$-voltage assignment $\eta\in C^1(X,D)$.

For each vertex $v\in V(X)$, we define the fiber $p^{-1}(v)$ to be the quotient group $G/D(v)$, so that the $G$-action is induced by the group operation:
\begin{equation}
p^{-1}(v)=\{\tv_{[g]}:[g]\in G/D(v)\},\quad g(\tv_{[g']})=\tv_{[g+g']}\mbox{ for }g\in G\mbox{ and }[g']\in G/D(v).
\label{eq:dilatedvertexfiber}
\end{equation}
Here $[g]$ denotes the equivalence class of $g\in G$ in $G/D(v)$. Similarly, for an edge $e\in E(X)$ we identify the fiber $p^{-1}(e)$ with the group $G$ itself (since the edge stabilizers are trivial) and the $G$-action with the group operation:
\begin{equation}
p^{-1}(e)=\{\te_{g}:g\in G\},\quad g(\te_{g'})=\te_{g+g'}\mbox{ for }g,g'\in G.
\label{eq:dilatededgefiber}
\end{equation}
The vertex and edge sets of $\tX$ are
\[
V(\tX)=\bigsqcup_{v\in V(X)}p^{-1}(v),\quad
E(\tX)=\bigsqcup_{e\in E(X)}p^{-1}(e),
\]
and the map $p:\tX\to X$ sends $p^{-1}(v)$ and $p^{-1}(e)$ to $v$ and $e$, respectively. To complete the definition of the $G$-cover $p:\tX\to X$, we need to connect the edges to the vertices, in other words we need to define the source and target maps $s,t:E(\tX)\to V(\tX)$. Just as in~\eqref{eq:freeedges}, we define the source map trivially:
\begin{equation}
s:p^{-1}(e)\to p^{-1}(s(e)),\quad s(\te_{g})=\widetilde{s(e)}_{[g]}.
\label{eq:dilatedsourcefiber}
\end{equation}
The target map, however, is twisted by the dilated $G$-voltage assignment $[\eta]\in H^1(X,D)$. Specifically, choose a representative $\eta\in C^1(X,D)$ of $[\eta]$, and, for each $e\in E(X)$, choose a preimage of $\eta_e\in G/[D(s(e))+D(t(e))]$ in $G$, which we also denote by $\eta_e$ by abuse of notation. We then set
\begin{equation}
t:p^{-1}(e)\to p^{-1}(t(e)),\quad t(\te_{g})=\widetilde{t(e)}_{[g+\eta_e]}.
\label{eq:dilatedtargetfiber}
\end{equation}
Theorem~\ref{thm:LUZ} states that the isomorphism class of the resulting harmonic $G$-cover $p:\tX\to X$ does not depend on any choices made, that any cover may be obtained in this way, and that two covers are isomorphic if and only if the corresponding elements in $H^1(X,D)$ are equal.

\begin{example} \label{ex:icosahedron2} Consider $G=\ZZ/5\ZZ$ acting on the icosahedron $\tX$ by a $2\pi/5$ rotation about an axis through two opposite vertices. The quotient $p:\tX\to X$ is shown on Figure~\ref{fig:icosahedron}, and the dilation groups are
\[
D(v_1)=D(v_4)=\ZZ/5\ZZ,\quad D(v_2)=D(v_3)=0.
\]
The group $C^0(X,D)\simeq (\ZZ/5\ZZ)^2$ has two generators, corresponding to $v_2$ and $v_3$, while $C^1(X,D)\simeq (\ZZ/5\ZZ)^4$ has four generators corresponding to $e_2$, $e_3$, $e_4$, and $e_5$. We see that the dilated cohomology group is $H^1(X,D)=(\ZZ/5\ZZ)^3$, and in fact coincides with the usual simplicial cohomology group with coefficients in $\ZZ/5\ZZ$ (this happens because the dilated vertices are extremal). The dilated voltage assignment $\eta$ describing the cover $p:\tX\to X$ takes values in $\ZZ/5\ZZ$ on $e_2,\ldots,e_5$ and is trivial on $e_1$ and $e_6$, and we may choose it to be 
\begin{equation}
\eta(e_2)=1,\quad \eta(e_3)=1,\quad \eta(e_4)=0,\quad \eta(e_5)=1.
\label{eq:icosahedronvoltage}
\end{equation}

\end{example}

\subsection{Contraction and resolution} A simple but important technical tool for us is the operation of \emph{edge contraction}. Given a set of edges $F\subseteq E(X)$ of a graph $X$, the \emph{induced subgraph} $X[F]\subseteq X$ is the minimal subgraph of $X$ containing $F$, in other words $E(X[F])=F$ and $V(X[F])=r(F)$. Now let $X[F]=X_1\sqcup \cdots\sqcup X_k$ be the decomposition of $X[F]$ into connected components. The \emph{contraction} $X_F$ of $X$ along $F$ is the graph obtained by contracting each $X_i$ to a separate vertex $v_i$. Specifically,
\[
V(X_F)=[V(X)\backslash V(X[F])]\cup\{v_1,\ldots,v_k\},\quad E(X_F)=E(X)\backslash F,
\]
and any edge $e\in E(X)\backslash F$ that in $X$ is rooted at a vertex of one of the $X_i$ is instead rooted at the corresponding vertex $v_i$ of $X_F$. 

We can likewise define the contraction of a harmonic morphism along a set of edges of the target graph. Let $f:\tX\to X$ be a harmonic morphism, and let $F\subseteq E(X)$. The \emph{contraction} $f_F:\tX_{f^{-1}(F)}\to X_F$ of $f$ along $F$ is defined as follows. First, we contract the edges $F$ of $X$ and their preimages $f^{-1}(F)$ in $\tX$ to obtain the graphs $X_F$ and $\tX_{f^{-1}(F)}$, respectively. Outside of the contracted vertices of $\tX_{f^{-1}(F)}$, the map $f_F$ is the restriction of $f$. Now let $X_i\subseteq X$ be a connected component of $X[F]$, and let $f^{-1}(X_i)=\tX_{i1}\sqcup \cdots\sqcup \tX_{ik_i}$ be the connected components of its preimage. Each $\tX_{ij}$ corresponds to a contracted vertex $\tv_{ij}\in V(\tX_{f^{-1}(F)})$, and we set $f_F(\tv_{ij})=v_i$, where $v_i\in V(X_F)$ is the vertex corresponding to $X_i$. It is elementary to check that $f_F$ is a harmonic morphism, provided that we set $d_{f_F}(\tv_{ij})=\deg f|_{\tX_{ij}}$, in other words the local degree of $f_F$ at $\tv_{ij}$ is equal to the global degree of $f$ on the connected graph $\tX_{ij}$.

Finally, let $p:\tX\to X$ be a harmonic $G$-cover and let $F\subseteq E(X)$ be a set of edges of the target. We can naturally equip the contraction $p_F:\tX_{p^{-1}(F)}\to X_F$ with the structure of a harmonic $G$-cover. Indeed, for a vertex $v\in V(X)\backslash V(X[F])$ the fibers $p^{-1}(v)$ and $p_F^{-1}(v)$ are the same, which defines the $G$-action on $p_F^{-1}(v)$. The same is true for any edge $e\in E(X)\backslash F$ of the contracted graph. Finally, let $v_i\in V(X_F)$ be a vertex corresponding to a connected component $X_i$ of $X[F]$. Automorphisms preserve connectivity, therefore $G$ acts on the connected components $\tX_{ij}$ of $p^{-1}(X_i)$, and the action is transitive because it is so on the fiber over any vertex of $X_i$. This defines the $G$-action on the fiber $p_F^{-1}(v_i)$.

We now show that any harmonic $G$-cover $p:\tX\to X$ can be obtained by edge contraction from a free $G$-cover, which we call a \emph{free resolution} of $p$. For simplicity, we restrict our attention to abelian covers.

\begin{lemma} Let $p:\tX\to X$ be a harmonic $G$-cover with abelian Galois group $G$. There exists a free $G$-cover $p_f:\tX_f\to X_f$ such that $p$ is an edge contraction of $p_f$.
\label{lem:contraction}
\end{lemma}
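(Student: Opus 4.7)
The plan is to resolve each dilated vertex of $X$ by attaching loops whose lifts in the free cover will connect the vertices in each fiber that get identified under the dilation. For each dilated vertex $v \in V(X)$, choose a generating set $h_1^v, \ldots, h_{k_v}^v$ of the finite abelian dilation group $D(v)\subseteq G$. Form $X_f$ by attaching $k_v$ loops $\ell_1^v, \ldots, \ell_{k_v}^v$ at each dilated $v \in V(X)$, and let $F \subseteq E(X_f)$ denote the set of all new loops. Since $F$ consists of loops, the induced subgraph $X_f[F]$ is a disjoint union of bouquets at the dilated vertices, and the contraction $(X_f)_F$ is precisely $X$.

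By Theorem~\ref{thm:LUZ}, the given cover corresponds to a class $[\eta] \in H^1(X,D)$. Pick a representative $\eta \in C^1(X,D)$ together with a lift $\widetilde\eta : E(X) \to G$ to actual $G$-values. Equip $X_f$ with the trivial dilation datum and define a $G$-cocycle $\eta_f \in C^1(X_f, G)$ by $\eta_f|_{E(X)} = \widetilde\eta$ and $\eta_f(\ell_i^v) = h_i^v$. Let $p_f : \tX_f \to X_f$ be the free $G$-cover associated to $\eta_f$, constructed explicitly via \eqref{eq:freevertices}--\eqref{eq:freeedges}.

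The remaining — and main — task is to verify that the contraction $(p_f)_F$ is isomorphic to $p$ as a harmonic $G$-cover. Above a dilated vertex $v$, the fiber $p_f^{-1}(v) = \{\tv_g : g \in G\}$ is connected by the preimage edges above $F$ into connected components corresponding exactly to $D(v)$-cosets in $G$, since $h_1^v,\ldots,h_{k_v}^v$ generate $D(v)$ — this is the crux of the construction. Each component has $|D(v)|$ vertices, so contraction produces $|G/D(v)|$ vertices above $v$ with local degree equal to the global degree of $p_f$ on each component, namely $|D(v)|$, matching the fiber description \eqref{eq:dilatedvertexfiber}. For an edge $e \in E(X)$, the lifts $\te_g$ in $\tX_f$ run from $\widetilde{s(e)}_g$ to $\widetilde{t(e)}_{g+\widetilde\eta(e)}$ by \eqref{eq:freeedges}, and descend under contraction to the edges of $\tX$ prescribed by \eqref{eq:dilatedsourcefiber}--\eqref{eq:dilatedtargetfiber}. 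Finally, the $G$-action on $\tX_f$ by translation $g \cdot \tv_{g'} = \tv_{g+g'}$ respects the $D(v)$-coset partition and therefore descends to the prescribed $G$-action on $\tX$, completing the identification.
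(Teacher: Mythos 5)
Your proposal is correct and follows essentially the same route as the paper: attach loops at each dilated vertex labelled by a chosen generating set of $D(v)$, lift the dilated voltage assignment to honest $G$-values, take the associated free cover, and observe that the connected components of the fiber over each bouquet of new loops are exactly the $D(v)$-cosets, so that contraction recovers the original cover. The only cosmetic difference is that you resolve all dilated vertices simultaneously whereas the paper resolves them one at a time.
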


\begin{proof} Choose a dilated vertex $v\in V(X)$ and let $e_1,\ldots,e_a$ and $f_1,\ldots,f_b$ be respectively the non-loop and loop edges at $v$. We recall the local description~\eqref{eq:dilatedvertexfiber}-\eqref{eq:dilatedtargetfiber} of $p$ near $v$. The fiber $p^{-1}(v)$ is identified with the quotient $G/D(v)$ as a $G$-set, while all edge fibers are identified with $G$:
\[
p^{-1}(v)=\{\tv_{[g]}:[g]\in G/D(v)\},\quad p^{-1}(e_i)=\{\te_{i,g}:g\in G\},\quad p^{-1}(f_j)=\{\tf_{j,g}:g\in G\}.
\]
We may assume that all non-loops are oriented away from $v$, so that $s(e_i)=s(f_j)=v$. The corresponding source maps on $\tX$ are the quotient map $G\to G/D(v)$:
\begin{equation}
s:p^{-1}(e_i)\to p^{-1}(v),\quad s(\te_{i,g})=\tv_{[g]},\quad  s:p^{-1}(f_j)\to p^{-1}(v),\quad s(\tf_{j,g})=\tv_{[g]}.
\label{eq:localfiber1}
\end{equation}
Finally, for each loop $f_j$ at $v$ there is an element $[\eta_j]\in G/(D(s(f_j))+D(t(f_j)))=G/D(v)$ such that the target map on the corresponding fiber consists of taking the quotient and adding $[\eta_j]$:
\begin{equation}
t:p^{-1}(f_j)\to p^{-1}(v),\quad t(\tf_{j,g})=\tv_{[g+\eta_j]}.
\label{eq:localfiber2}
\end{equation}

We now choose generators $g_1,\ldots,g_c\in D(v)$ of the dilation subgroup and a representative $\eta_j\in G$ of each $[\eta_j]$. We construct a harmonic $G$-cover $p':\tX'\to X'$ as follows. The graph $X'$ consists of the graph $X$ with $c$ loops $l_1,\ldots,l_c$ attached to $v$. To construct $\tX'$, we first replace the fiber $p^{-1}(v)=G/D(v)$ with a copy of $G$, so that
\[
(p')^{-1}(v)=\{\tv_g:g\in G\}.
\]
We then reattach the loose edges in $p^{-1}(e_i)$ and $p^{-1}(f_j)$ using the same formulas~\eqref{eq:localfiber1} and~\eqref{eq:localfiber2}, but without taking the quotient by $D(v)$. Finally, the fiber over each loop $l_k$ is a copy of $G$, the source map is trivial, and the target map is determined by $g_k$:
\[
(p')^{-1}(l_k)=\{\widetilde{l}_{k,g}:g\in G\},\quad s(\widetilde{l}_{k,g})=\tv_g,\quad t(\widetilde{l}_{k,g})=\tv_{g+g_k}.
\]
The resulting morphism $p':\tX'\to X'$ is a harmonic $G$-cover with trivial dilation subgroup at $v$. Furthermore, let $X_v\subset X'$ be the subgraph consisting of $v$ and the new loops $l_k$. Edge paths in the graph $(p')^{-1}(X_v)$ correspond to linear combinations of the chosen generators $g_k$ of $G$, hence two vertices $\tv_g,\tv_{g'}\in (p')^{-1}(v)$ lie in the same connected component of $(p')^{-1}(X_v)$ if and only if $g-g'\in D(v)$. Hence the edge contraction of $p'$ along the added loops $l_k$ is the original $G$-cover $p$. Performing this resolution at each dilated vertex of $X$, we obtain a free resolution $p_f$.

An example of the local picture of the free resolution is given on Figure~\ref{fig:freeresolution}. The Galois group is $G=\ZZ/6\ZZ$, and the dilated vertex $v$ has dilation subgroup $D(v)=\ZZ/3\ZZ$.

\end{proof}

\begin{figure}
    \centering
\begin{tikzpicture}

    \draw[thick] (-1.5,0) -- (1.5,0);
    \node[below] (0,0) {$v$};
    \draw[fill](0,0) circle(.08);

\begin{scope}[shift={(0,2)}]

    \draw[fill](0,0) circle(.08);
    \draw[thick] (-1.5,1) -- (1.5,-1);
    \draw[thick] (-1.5,-1) -- (1.5,1);
    \draw[thick] (-1.5,0) -- (1.5,0);

\end{scope}
\begin{scope}[shift={(0,5)}]

    \draw[fill](0,0) circle(.08);
    \draw[thick] (-1.5,1) -- (1.5,-1);
    \draw[thick] (-1.5,-1) -- (1.5,1);
    \draw[thick] (-1.5,0) -- (1.5,0);

\end{scope}

\begin{scope}[shift={(4,0)}]

    \draw[thick] (-1.5,0) -- (1.5,0);
    \draw[thick] (0,-0.5) circle(0.5);
    \draw[fill](0,0) circle(.08);
\end{scope}

\begin{scope}[shift={(4,2)}]
    \draw[fill](-1,0) circle(.08);
    \draw[fill](0.5,1) circle(.08);
    \draw[fill](0.5,-1) circle(.08);
    \draw[thick] (-1.5,1) -- (1.5,1);
    \draw[thick] (-1.5,-1) -- (1.5,-1);
    \draw[thick] (-1.5,0) -- (1.5,0);
    \draw[thick] (-1,0) -- (0.5,1);
    \draw[thick] (0.5,1) -- (0.5,-1);
    \draw[thick] (0.5,-1) -- (-1,0);

\end{scope}

\begin{scope}[shift={(4,5)}]
    \draw[fill](-1,0) circle(.08);
    \draw[fill](0.5,1) circle(.08);
    \draw[fill](0.5,-1) circle(.08);
    \draw[thick] (-1.5,1) -- (1.5,1);
    \draw[thick] (-1.5,-1) -- (1.5,-1);
    \draw[thick] (-1.5,0) -- (1.5,0);
    \draw[thick] (-1,0) -- (0.5,1);
    \draw[thick] (0.5,1) -- (0.5,-1);
    \draw[thick] (0.5,-1) -- (-1,0);

\end{scope}

\end{tikzpicture}
    \caption{Resolving a vertex with dilation group $D(v)=\ZZ/3\ZZ\subset \ZZ/6\ZZ$}
    \label{fig:freeresolution}
\end{figure}



\section{Jacobian polynomials and zeta functions} \label{sec:zeta}

In this section, we recall the theory of chip-firing on a graph $X$, the critical group of $X$ (which we call the \emph{Jacobian group} and denote by $\Jac(X)$), and the relationship to the spanning trees of $X$. Our main tool for computing the order of $\Jac(X)$ (which is the number of spanning trees of $X$) is the Ihara zeta function $\zeta(s,X)$, and we recall the theory of zeta functions and the more general Artin--Ihara $L$-functions of free $G$-covers.

\subsection{Chip-firing, the graph Laplacian, and the Jacobian group} Let $\Div(X)=\ZZ^{V(X)}$ be the free abelian group on the vertices of a graph $X$. An element of $\Div(X)$ is called a \emph{divisor} and may interpreted as an assignment of a number of chips (positive, negative, or zero) to each vertex of $X$. The \emph{Laplacian operator} $L$ of $X$ is the $\ZZ$-linear operator on $\ZZ^{V(X)}$ defined as follows. Let $n$ be the number of vertices of $X$, and define the $n\times n$ \emph{valency} and \emph{adjacency} matrices $Q$ and $A$ as follows:
\begin{equation}
Q_{uv}=\begin{cases} \val(u), & u=v, \\
0, & u\neq v,
\end{cases}\quad
A_{uv}=\mbox{number of edges between }u\mbox{ and }v.
\label{eq:QA}
\end{equation}
We note that each loop at a vertex $u$ contributes $2$ to both $Q_{uu}$ and $A_{uu}$. The matrix of the Laplacian operator is the difference of the valency and adjacency matrices:
\[
L:\ZZ^{V(X)}\to \Div(X),\quad L_{uv}=Q_{uv}-A_{uv}.
\]
Specifically, for $v\in V(X)$ the divisor $-L(v)$ is obtained by \emph{firing} the vertex $v$, in other words by moving a chip from $v$ along each edge $e\in T_vX$ to its other root vertex (the chip returns to $v$ if $e$ is a loop). The image of $L$ is the subgroup of \emph{principal divisors}, which lies in $\Div_0(X)$, the subgroup of divisors with total number of chips is equal to zero. The corresponding quotient is a finite group called the \emph{Jacobian}, or \emph{critical group}, of $X$:
\[
\Jac(X)=\Div_0(X)/\Im L.
\]

Given a morphism $f:\tX\to X$ of graphs, we define a \emph{pushforward map}
\[
f_*:\Div(\tX)\to \Div(X),\quad f_*\left[\sum_{\tv\in V(\tX)}a_{\tv}\tv\right]=\sum_{\tv\in V(\tX)}a_{\tv}f(\tv).
\]
This map does not in general respect chip-firing, and there is no relationship between $\Jac(\tX)$ and $\Jac(X)$. However, if $f$ is a harmonic morphism, then
\[
(f_*\circ L_{\tX})(\tv)=d_f(\tv)(L_X\circ f_*)(\tv)
\]
for any $\tv\in V(\tX)$, where $L_{\tX}$ and $L_X$ are the Laplacians of $\tX$ and $X$, respectively. Hence the image of a principal divisor is principal, and $f_*$ induces a homomorphism on the Jacobian groups:
\[
f_*:\Jac(\tX)\to \Jac(X).
\]
The map $f_*$ is surjective and hence
\[
|\Ker f_*|=\frac{|\Jac(\tX)|}{|\Jac(X)|}.
\]
The purpose of this paper is to evaluate this quantity in the case when $f$ is an abelian harmonic $G$-cover.

\subsection{Kirchhoff's theorem and the Jacobian polynomial} The order of the Jacobian of a graph $X$ can be computed from the Laplacian matrix $L$, which is always singular: its rows sum to zero and so do its columns. Kirchhoff's matrix tree theorem states that any cofactor of $L$ is equal to the number of spanning trees of $X$. It is elementary to see that this quantity is also the order of the Jacobian group of $X$:
\begin{equation}
|\Jac(X)|=\mbox{number of spanning trees of }X=\sum_{T\subseteq X}1,
\label{eq:matrixtree}
\end{equation}
where the sum is taken over all spanning trees $T$ of $X$.

We gain a certain flexibility by converting this quantity into a polynomial. Let $x_e$ be a variable for each edge $e\in E(X)$, representing the length of $e$. We now weigh the terms in the sum~\eqref{eq:matrixtree} by the products of the lengths of the complementary edges:

\begin{definition} The \emph{Jacobian polynomial} of a graph $X$ is
\begin{equation}
J_X(x_e)=\sum_{T\subseteq X}\prod_{e\in E(X)\backslash E(T)}x_e\in \ZZ[x_e]_{e\in E(X)},
\label{eq:JX}
\end{equation}
where the sum is taken over all spanning trees $T$ of $X$.
\end{definition}
The number of edges in the complement of any spanning tree of $X$ is equal to the genus $g(X)$, hence $J(X)$ is a homogeneous polynomial of degree $g(X)$ with integer coefficients. Kirchhoff's matrix tree theorem~\eqref{eq:matrixtree} then states that
\[
|\Jac(X)|=J_X(1).
\]

More generally, let $\un=\{n_e\}_{e\in E(X)}$ be positive integers indexed by the edges of $X$, and let $X_{\un}$ be the graph obtained from $X$ by replacing each edge $e\in E(X)$ with a chain of $n_e$ edges. A moment's thought shows that plugging $x_e=n_e$ into the Jacobian polynomial of $X$ counts the spanning trees of $X_{\un}$, so
\begin{equation}
|\Jac(X_{\un})|=\mbox{number of spanning trees of }X_{\un}=J_X(n_e).
\label{eq:Jn}
\end{equation}
In other words, $J_X$ computes the order of the Jacobian of any graph whose topological type is the same as $X$.

Even more generally, let $\ell:E(X)\to \RR_{>0}$ be arbitrary real numbers, and let $\Xi=(X,\ell)$ be the \emph{metric graph} obtained by identifying each edge $e\in E(X)$ with a real interval of length $\ell(e)$. The theory of chip-firing on metric graphs is developed analogously, where the divisors may now be supported on the interior points of edges. We do not review this theory here, and the interested reader is referred to~\cite{2008MikhalkinZharkov}. We only note that the Jacobian $\Jac(\Xi)$ of a metric graph $\Xi=(X,\ell)$ is a real torus of dimension $g(X)$, and has a canonically defined volume that is computed by the Jacobian polynomial:
\begin{proposition}[Theorem 5.2 in~\cite{2014AnBakerKuperbergShokrieh}] The volume of the Jacobian $\Jac(\Xi)$ of a metric graph $\Xi=(X,\ell)$ is given by
\[
\Vol^2(\Jac(\Xi))=\sum_{T\subseteq X}\prod_{e\in E(X)\backslash E(T)}\ell(e)=J_X(\ell(e)).
\]
    
\end{proposition}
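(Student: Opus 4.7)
The plan is to identify $\Vol^2(\Jac(\Xi))$ with the determinant of the Gram matrix of the period lattice of $\Jac(\Xi)$, expand this determinant via the Cauchy--Binet formula, and then invoke the unimodularity of the cycle space to match the result term-by-term with $J_X(\ell(e))$.

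First I would fix an orientation on $X$ and use the standard analytic description $\Jac(\Xi) = H_1(X,\RR)/H_1(X,\ZZ)$, where the real $g$-torus carries the flat metric inherited from the length inner product
\[
\Big\langle \sum_{e\in E(X)} a_e e,\ \sum_{e\in E(X)} b_e e \Big\rangle = \sum_{e \in E(X)} \ell(e) a_e b_e
\]
on the chain space $C_1(X,\RR) = \RR^{E(X)}$. With this polarization, $\Vol^2(\Jac(\Xi)) = \det G$, where $G$ is the Gram matrix of any $\ZZ$-basis of $H_1(X,\ZZ)$.

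Next I would pick an auxiliary spanning tree $T_0 \subseteq X$ and take as my integer basis the fundamental cycles $\{\gamma_e\}_{e \notin E(T_0)}$, each oriented so that the coefficient of $e$ is $+1$. Letting $B$ be the $g \times |E(X)|$ matrix recording these cycles in the edge basis and $D = \mathrm{diag}(\ell(e))$, we have $G = B D B^T$. Applying the Cauchy--Binet formula to $B D^{1/2}$ yields
\[
\det G = \sum_{\substack{S \subseteq E(X)\\ |S| = g}} \det(B_S)^2 \prod_{e \in S} \ell(e),
\]
where $B_S$ is the $g \times g$ submatrix of $B$ consisting of the columns indexed by $S$.

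The crucial input is the minor identity: $\det(B_S) = \pm 1$ when $E(X) \backslash S$ is a spanning tree of $X$, and $\det(B_S) = 0$ otherwise. Indeed, the columns $B_S$ are linearly independent precisely when no nontrivial cycle is supported on $E(X) \backslash S$, equivalently (by cardinality) when $E(X) \backslash S$ is a spanning tree; the $\pm 1$ value then follows from the total unimodularity of the cycle-edge incidence matrix, which is transparent in the fundamental-cycle form of $B$ (the columns indexed by $E(X)\backslash E(T_0)$ already form the identity, and a change-of-cotree argument propagates unimodularity). Substituting into the Cauchy--Binet expansion gives
\[
\det G = \sum_{\substack{T \subseteq X \\ \text{spanning tree}}} \prod_{e \in E(X) \backslash E(T)} \ell(e) = J_X(\ell(e)),
\]
as required. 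The main obstacle is this unimodularity/minor identity; once it is in hand, the rest is essentially bookkeeping with the Cauchy--Binet expansion, and one recovers the classical Kirchhoff matrix-tree theorem by specializing $\ell(e) = 1$.
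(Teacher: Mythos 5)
Your proof is correct: the identification $\Vol^2(\Jac(\Xi))=\det\bigl(BDB^T\bigr)$ for a fundamental-cycle basis, the Cauchy--Binet expansion, and the minor identity $\det(B_S)=\pm 1$ exactly when $E(X)\setminus S$ is a spanning tree (which follows since $H_1(X,\ZZ)\to\ZZ^S$ is then a $\ZZ$-isomorphism) together give precisely the claimed formula, and the argument survives loops and multiedges in Serre's graph model. The paper itself offers no proof -- it cites Theorem 5.2 of An--Baker--Kuperberg--Shokrieh -- and your argument is essentially the standard one given there, so there is nothing to reconcile.
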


Finally, we have the following useful lemma, which shows that Jacobian polynomials behave well under edge contraction.

\begin{lemma} Let $X$ be a graph, let $e\in E(X)$ be an edge, and let $X_e$ be the graph obtained by contracting $e$. The Jacobian polynomial of $X_e$ is equal to
\begin{equation}
J_{X_e}=\begin{cases} J_X|_{x_e=0}, & e\mbox{ is not a loop,}\\
J_X/x_e, & e\mbox{ is a loop.}
\end{cases}
\label{eq:Jedgecontraction}
\end{equation}
\end{lemma}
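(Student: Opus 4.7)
The plan is to prove both cases directly from the definition~\eqref{eq:JX} by setting up an explicit bijection between spanning trees of $X$ and those of $X_e$. The key observation is that a loop is never contained in a spanning tree, while a non-loop edge may or may not be.

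First I would handle the case when $e$ is not a loop. Contracting $e$ induces a bijection between spanning trees of $X$ containing $e$ and spanning trees of $X_e$: given a spanning tree $T$ of $X$ with $e\in E(T)$, the image $T_e$ is a spanning tree of $X_e$, and this map is invertible by reattaching $e$. Under this bijection, $E(X)\setminus E(T)=E(X_e)\setminus E(T_e)$, so each spanning tree of $X$ containing $e$ contributes exactly the same monomial to $J_X$ as its image contributes to $J_{X_e}$. On the other hand, every spanning tree $T\subseteq X$ that does not contain $e$ contributes a monomial in $J_X$ divisible by $x_e$. Setting $x_e=0$ in $J_X$ therefore annihilates precisely the monomials from spanning trees avoiding $e$, and the remaining sum is $J_{X_e}$.

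Next I would handle the case when $e$ is a loop. Here no spanning tree of $X$ contains $e$, and deleting $e$ gives a bijection between spanning trees of $X$ and spanning trees of $X_e$ (contracting a loop is the same as deleting it, so the vertex set is unchanged). Under this bijection, $E(X)\setminus E(T)=(E(X_e)\setminus E(T))\sqcup\{e\}$, so every monomial in $J_X$ is $x_e$ times the corresponding monomial in $J_{X_e}$. Dividing by $x_e$ yields $J_{X_e}=J_X/x_e$.

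There is no real obstacle here; the only subtlety is to keep the two contraction conventions straight, since contracting a non-loop merges its endpoints while contracting a loop simply deletes the edge, and this distinction is exactly what produces the two different substitution formulas.
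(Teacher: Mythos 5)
Your proof is correct and follows essentially the same route as the paper: the bijection between spanning trees of $X$ containing a non-loop $e$ and spanning trees of $X_e$ (with identical complements) justifies the substitution $x_e=0$, and the observation that a loop lies in the complement of every spanning tree justifies dividing by $x_e$. Your write-up is somewhat more explicit about the bijections, but there is no substantive difference in approach.
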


\begin{proof} If $e$ is not a loop, then there is a natural bijection between the spanning trees of $X$ containing $e$ and the spanning trees of $X_e$, and setting $x_e=0$ removes the terms of $J_X$ corresponding to the spanning trees of $X$ that do not contain $e$. If $e$ is a loop, then it is contained in the complement of any spanning tree of $X$, so $J_X$ divides $x_e$. The graph $X_e$ has the same set of spanning trees, but without $e$ as a complementary edge, hence $J_{X_e}=J_X/x_e$.
    
\end{proof}

\subsection{Zeta functions and $L$-functions} Our calculations of Jacobian polynomials in $G$-covers are based on the theory of zeta functions (for an individual graph $X$) and $L$-functions (for a free $G$-cover $p:\tX\to X$). These functions are defined as products over the set of primes of $X$, which are equivalence classes of certain closed paths on $X$. We recall this theory, following~\cite{2010Terras}, and to simplify exposition we consider only abelian $G$. 

Let $X$ be a graph. A \emph{path} on $X$ is a sequence $P=e_1\cdots e_k$ of oriented edges such that $t(e_i)=s(e_{i+1})$ for all $i=1,\ldots,k-1$. We say that $P$ is \emph{closed} if $t(e_k)=s(e_1)$, and a closed path $P$ is \emph{reduced} if $e_{i+1}\neq \oe_i$ for all $i=1,\ldots,k-1$ and $e_k\neq \oe_1$. A closed reduced path $P$ is called \emph{primitive} if $P\neq Q^c$ for any closed path $Q$ and any $c\geq 2$. Given a closed reduced path $P=e_1\cdots e_n$, any path of the form $e_k\cdots e_ne_1\cdots e_{k-1}$ for $k=1,\ldots,n$ (consisting of the same sequence of oriented edges, but with a possibly different starting vertex) is \emph{equivalent} to $P$, and a \emph{prime} of $X$ is an equivalence class $[P]$ of a primitive path $P$. We denote by $\mathfrak{P}(X)$ the set of primes of $X$, which is usually infinite.

We consider three zeta functions associated to a graph $X$, in order of increasing specialization. The \emph{edge zeta} is the most general one, but mostly plays an auxiliary role. The \emph{metric zeta} is the most important for us, and is obtained from the edge zeta by specialization. Further specializing, we obtain the \emph{Ihara zeta}, for which we have the most convenient identities.

\begin{definition} Let $X$ be a graph. 

\begin{enumerate} \item Let $\{w_{ef}\}$ be a set of variables indexed by pairs of oriented edges $e$ and $f$ such that $t(e)=s(f)$ and $f\neq \oe$. The \emph{edge norm} $N(P)$ of a closed path $P=e_1\cdots e_k$ on $X$ is
\[
N(P)=w_{e_1e_2}w_{e_2e_3}\cdots w_{e_{k-1}e_k}w_{e_ke_1}.
\]
The \emph{edge zeta function} of $X$ is
\[
\zeta_E(w_{ef},X)=\prod_{[P]\in \mathfrak{P}(X)}(1-N(P))^{-1}.
\]

\item Let $\{x_e\}$ be a set of variables indexed by the unoriented edges of $X$, where we view $x_e$ as the length of $e$ (so that $x_{\oe}=x_e$), and let $s$ be a complex variable. The \emph{metric length} of a closed path $P=e_1\cdots e_k$ on $X$ is
\[
M(P)=x_{e_1}+\cdots+x_{e_k}.
\]
The \emph{metric zeta function} of $X$ is
\[
\zeta_M(s,x_e,X)=\prod_{[P]\in \mathfrak{P}(X)}\left(1-s^{M(P)}\right)^{-1}.
\]

\item Let $s$ be a complex variable. The \emph{Ihara zeta function} of $X$ is
\[
\zeta(s,X)=\prod_{[P]\in \mathfrak{P}(X)}\left(1-s^{L(P)}\right)^{-1},
\]
where $L(P)$ is the number of edges in $P$.
\end{enumerate}
\label{def:zeta}
\end{definition}

It is clear that the edge zeta function specializes to the metric zeta function, which in turn specializes to the Ihara zeta function:
\begin{equation}
\zeta_M(s,x_e,X)=\left.\zeta_E(w_{ef},X)\right|_{w_{ef}=s^{x_e}},\quad \zeta(s,X)=\left.\zeta_M(s,x_e,X)\right|_{x_e=1}.
\label{eq:specializationzeta}
\end{equation}
The three zeta functions may be computed as reciprocals of explicit matrix determinants. Let $m$ be the number of edges of $X$, and let $W$ be the $2m\times 2m$-matrix indexed by the oriented edges of $X$, with entry $w_{ef}$ if $t(e)=s(f)$ and $f\neq \oe$, and zero otherwise. The \emph{two-term determinant formula} computes the edge zeta function in terms of $W$.

\begin{theorem}[Theorem 3 in~\cite{1996StarkTerras} or Theorem 11.4 in~\cite{2010Terras}] Let $X$ be a graph. The edge zeta function of $X$ satisfies 
\begin{equation}
\zeta_E(w_{ef},X)^{-1}=\det(I-W).
\label{eq:twotermzeta}
\end{equation}
\label{thm:edgezeta}  
\end{theorem}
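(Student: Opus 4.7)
The plan is to verify the identity $\zeta_E(w_{ef},X)^{-1}=\det(I-W)$ by taking logarithms of both sides, regarded as formal power series in the variables $w_{ef}$, and matching both resulting expressions with the generating series $\sum_{n\geq 1}\mathrm{tr}(W^n)/n$.

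First, I would handle the determinant side using the standard formal power series identity $-\log\det(I-W)=\sum_{n\geq 1}\mathrm{tr}(W^n)/n$. Expanding the matrix product directly, each diagonal entry $(W^n)_{ee}$ is the sum of monomials $w_{e_1e_2}w_{e_2e_3}\cdots w_{e_{n-1}e_n}w_{e_ne_1}$ with $e_1=e$, indexed by sequences $(e_1,\ldots,e_n)$ of oriented edges satisfying $t(e_i)=s(e_{i+1})$ and $e_{i+1}\neq\oe_i$ cyclically in the index $i\in\mathbb{Z}/n\mathbb{Z}$. These are exactly the conditions defining a closed reduced path of length $n$ with a distinguished starting edge, so $\mathrm{tr}(W^n)=\sum_{P}N(P)$, summed over all such paths.

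Next, I would regroup this sum according to the unique decomposition $P=Q^k$ of a closed reduced path into a positive power of a primitive path. For a fixed prime $[Q_0]\in\mathfrak{P}(X)$ with $\ell=L(Q_0)$, primitivity of $Q_0$ guarantees that its equivalence class contains exactly $\ell$ distinct cyclic shifts of $Q_0$, and for each $k\geq 1$ the $k$-th powers of these shifts give $\ell$ pairwise distinct closed reduced paths of length $k\ell$, each of norm $N(Q_0)^k$. Substituting yields
\[
\sum_{n\geq 1}\frac{\mathrm{tr}(W^n)}{n}=\sum_{[Q_0]\in\mathfrak{P}(X)}\sum_{k\geq 1}\frac{\ell\cdot N(Q_0)^k}{k\ell}=-\sum_{[Q_0]\in\mathfrak{P}(X)}\log\bigl(1-N(Q_0)\bigr)=\log\zeta_E(w_{ef},X),
\]
and comparing with the expansion of $-\log\det(I-W)$ in the previous step gives the desired identity.

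The main obstacle is the combinatorial bookkeeping in the regrouping step: one must carefully verify that the decomposition $P=Q^k$ into a power of a primitive is unique, that the $\ell$ cyclic shifts of a primitive $Q_0$ are pairwise distinct as sequences of oriented edges (a property that fails for imprimitive paths, since a shift by $L(R)$ of $Q_0=R^c$ returns $Q_0$), and that the $k$-th powers of these shifts remain pairwise distinct. Since only finitely many reduced paths of any given length exist in $X$, all sums are finite at each order in the variables $w_{ef}$ and convergence of the formal series is automatic.
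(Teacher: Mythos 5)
Your proof is correct. Note that the paper does not prove this statement at all --- it is quoted directly from Stark--Terras and Terras's book --- and your argument (expanding $-\log\det(I-W)=\sum_{n\ge 1}\operatorname{tr}(W^n)/n$, identifying $\operatorname{tr}(W^n)$ with the sum of norms over pointed closed reduced paths of length $n$, and regrouping via the unique decomposition $P=Q^k$ into a power of a primitive, using that a primitive class of length $\ell$ has exactly $\ell$ distinct pointed representatives) is essentially the standard proof found in those references. The bookkeeping points you flag (uniqueness of the primitive root, distinctness of the $\ell$ cyclic shifts and of their $k$-th powers, and formal convergence degree by degree) are exactly the right ones and all hold.
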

We can use the above formula and the specializations~\eqref{eq:specializationzeta} to compute the metric $\zeta_M(s,x_e,X)$ and the Ihara $\zeta(s,X)$ zeta functions. However, the latter admits a more convenient \emph{three-term determinant formula} in terms of the valency and adjacency matrices~\eqref{eq:QA}:
\begin{theorem} [See~\cite{1992Bass} or Theorem 2.5 in~\cite{2010Terras}] Let $X$ be a graph. The Ihara zeta function of $X$ satisfies
\begin{equation}
\zeta(s,X)^{-1}=(1-s^2)^{g(X)-1}\det[I_n-sA+s^2(Q-I_n)].
\label{eq:threetermzeta}
\end{equation}
\label{thm:Iharazeta}
\end{theorem}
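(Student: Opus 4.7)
The plan is to derive Bass's three-term determinant formula from the two-term edge-zeta formula of Theorem~\ref{thm:edgezeta} by performing a Schur-complement-style reduction from the $2m\times 2m$ oriented-edge regime down to the $n\times n$ vertex regime. First specialize~\eqref{eq:twotermzeta} via $w_{ef}=s$ to obtain
\[
\zeta(s,X)^{-1}=\det(I_{2m}-sW_0),
\]
where $W_0$ is the $\{0,1\}$-valued non-backtracking transition matrix on oriented edges, with $(W_0)_{ef}=1$ iff $t(e)=s(f)$ and $f\neq\bar e$.

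To descend to an $n\times n$ determinant, I introduce the two $n\times 2m$ incidence matrices $B_{v,e}=\mathbf{1}[s(e)=v]$ and $C_{v,e}=\mathbf{1}[t(e)=v]$ together with the $2m\times 2m$ edge-reversal involution $J$ (which satisfies $J^2=I$). A direct check against the conventions in~\eqref{eq:QA}, under which loops contribute $2$ to the diagonals of both $Q$ and $A$, yields the vertex-level identities $BB^T=CC^T=Q$, $BC^T=A$, $JC^T=B^T$, together with the key factorization $W_0=C^T B - J$.

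The algebraic core is the identity
\[
I_{2m}-sW_0 = (I_{2m}+sJ)-sC^T B = (I_{2m}+sJ)\bigl(I_{2m}-s(I_{2m}+sJ)^{-1}C^T B\bigr).
\]
Because $(I_{2m}+sJ)(I_{2m}-sJ)=(1-s^2)I_{2m}$, one reads off $\det(I_{2m}+sJ)=(1-s^2)^m$ (by block-diagonalizing $J$ into $m$ two-by-two swaps) and inverts as $(I_{2m}+sJ)^{-1}=(I_{2m}-sJ)/(1-s^2)$. Applying Sylvester's identity $\det(I_{2m}-XY)=\det(I_n-YX)$ to the rectangular factors $X=s(C^T-sB^T)/(1-s^2)$ and $Y=B$ converts the remaining $2m\times 2m$ determinant into the $n\times n$ determinant
\[
(1-s^2)^{-n}\det\bigl((1-s^2)I_n-sA+s^2Q\bigr),
\]
using $BC^T=A$ and $BB^T=Q$. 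Tallying the accumulated powers of $(1-s^2)$ and invoking $g(X)-1=m-n$ produces the claimed formula.

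The main obstacle is the careful bookkeeping of the $(1-s^2)$ factors across the two reductions, together with verifying the decomposition $W_0=C^T B - J$ and the loop contributions to $A=BC^T$ and $Q=BB^T$ under Serre's oriented-edge convention, so that the doubled diagonal entries in~\eqref{eq:QA} emerge automatically from summing over both orientations of each loop. Once these identities are in place the remainder is routine linear algebra; the calculation formally requires $s^2\neq 1$ but extends to all $s$ by polynomial continuity, since both sides of the claimed equation are polynomials in $s$.
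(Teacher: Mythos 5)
Your proof is correct: the identities $BB^T=CC^T=Q$, $BC^T=A$, $JC^T=B^T$ and $W_0=C^TB-J$ all hold under the paper's loop conventions (each loop yielding two oriented edges with the same source and target), and the bookkeeping $(1-s^2)^{m}(1-s^2)^{-n}=(1-s^2)^{g(X)-1}$ comes out right. The paper does not prove this statement but cites it to Bass and to Terras, and your argument is essentially the standard proof given in those references (the Bass/Kotani--Sunada linear-algebra reduction from the $2m\times 2m$ non-backtracking operator to the $n\times n$ twisted Laplacian via $\det(I-XY)=\det(I-YX)$), so there is nothing further to compare.
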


We now consider a free $G$-cover $p:\tX\to X$ with abelian Galois group $G$. Choosing an orientation on $X$, the cover corresponds to a $G$-voltage assignment $[\eta]\in H^1(X,G)$, which we extend the assignment to edges with the opposite orientation by setting $\eta(\oe)=-\eta(e)$ for all $e\in E(G)$. Given a closed path $P=e_1\cdots e_k$ on $X$, the element
\[
\eta(P)=\eta(e_1)+\cdots+\eta(e_k)
\]
is called the \emph{Frobenius element} of $P$ and has the following geometric significance. Let $v\in V(X)$ be the starting and ending vertex of $P$, and fix a preimage $\tv\in p^{-1}(v)$. Since $f$ is a covering, the path $P$ has a unique lift to a path $Q$ on $\tX$ that starts at $\tv$. The path $Q$ need not be closed, but its end vertex $\tv'$ also lies in $p^{-1}(v)$, and $\eta(P)$ is the unique element of $G$ that maps $\tv$ to $\tv'$. Finally, it is clear that equivalent paths have equal Frobenius elements.

We now fix a character $\rho$ of the Galois group $G$ (since the latter is abelian, this is the same thing as an irreducible representation), and consider the same products over the primes of $X$ that define the zeta functions, but weighted by the Frobenius elements. The variables $w_{ef}$, $x_e$, and $s$ and the lengths $N(P)$, $M(P)$, and $L(P)$ are the same as in Definition~\ref{def:zeta}.

\begin{definition} Let $p:\tX\to X$ be a free $G$-cover with abelian Galois group $G$ and let $\rho:G\to \mathbb{C}^*$ be a character of $G$. 

\begin{enumerate} \item The \emph{edge Artin $L$-function} of $p:\tX\to X$ and $\rho$ is
\[
L_E(w_{ef},\tX/X,\rho)=\prod_{[P]\in \mathfrak{P}(X)}(1-\rho(\eta(P))N(P))^{-1}.
\]

\item The \emph{metric Artin $L$-function} of $p:\tX\to X$ and $\rho$ is
\[
L_M(s,x_e,\tX/X,\rho)=\prod_{[P]\in \mathfrak{P}(X)}\left(1-\rho(\eta(P))s^{M(P)}\right)^{-1}.
\]

\item The \emph{Artin--Ihara $L$ function} of $p:\tX\to X$ and $\rho$ is
\[
L(s,\tX/X,\rho)=\prod_{[P]\in \mathfrak{P}(X)}\left(1-\rho(\eta(P))s^{L(P)}\right)^{-1}.
\]
\end{enumerate}
\label{def:L}
\end{definition}

As for the zeta functions, we have a specialization rule
\begin{equation}
L_M(s,x_e,\tX/X,\rho)=\left.L_E(w_{ef},\tX/X,\rho)\right|_{w_{ef}=s^{x_e}},\quad L(s,\tX/X,\rho)=\left.L_M(s,x_e,\tX/X,\rho)\right|_{x_e=1}.
\label{eq:specializationL}
\end{equation}
The $L$-functions can also be computed by two- and three-term determinant formulas. As before, let $m$ be the number of edges of $X$, and let $W
_{\rho}$ be the $2m\times 2m$-matrix indexed by the oriented edges of $X$, with entry $w_{ef}\rho(\eta(e))$ if $t(e)=s(f)$ and $f\neq \oe$, and zero otherwise. 

\begin{theorem}[Theorem 19.3 in~\cite{2010Terras}] Let $p:\tX\to X$ be a free $G$-cover with abelian Galois group $G$ and let $\rho:G\to \mathbb{C}^*$ be a character of $G$. The edge Artin $L$-function of $X$ satisfies 
\begin{equation}
L_E(w_{ef},\tX/X,\rho)^{-1}=\det(I-W_{\rho}).
\label{eq:twotermL}
\end{equation}
\label{thm:edgeL}  
\end{theorem}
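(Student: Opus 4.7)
The plan is to adapt the standard proof of Theorem~\ref{thm:edgezeta} by absorbing the character weight into the edge matrix and tracking the resulting twist through the trace calculation. Starting from the formal power series identity
\[
\log\det(I - W_\rho) = \operatorname{tr}\log(I - W_\rho) = -\sum_{k \geq 1} \frac{1}{k}\operatorname{tr}(W_\rho^k),
\]
viewed in the ring of formal power series in the variables $\{w_{ef}\}$, the central step is to compute $\operatorname{tr}(W_\rho^k)$ as a sum over closed reduced paths on $X$ of length $k$, weighted by the edge norm and the Frobenius character.

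Expanding the matrix power,
\[
\operatorname{tr}(W_\rho^k) = \sum_{e_1, \ldots, e_k} (W_\rho)_{e_1 e_2}(W_\rho)_{e_2 e_3} \cdots (W_\rho)_{e_k e_1},
\]
where the sum ranges over tuples of oriented edges of $X$. The nonvanishing terms correspond exactly to sequences satisfying $t(e_i) = s(e_{i+1})$ and $e_{i+1} \neq \overline{e}_i$ for every cyclic index $i$---that is, to closed reduced paths $P = e_1 \cdots e_k$ of length $k$. Using the matrix entry formula $(W_\rho)_{ef} = w_{ef}\rho(\eta(e))$ and the multiplicativity of $\rho$ on sums in $G$, the contribution of each such path is
\[
w_{e_1 e_2} w_{e_2 e_3} \cdots w_{e_k e_1} \cdot \rho(\eta(e_1))\rho(\eta(e_2)) \cdots \rho(\eta(e_k)) = N(P)\,\rho(\eta(P)).
\]

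Next, I would group the closed reduced paths of length $k$ by their equivalence classes. Any closed reduced path of length $k$ is a cyclic rotation of some power $Q^c$ where $Q$ is a primitive closed reduced path of length $j = L(Q)$ and $cj = k$; since $Q^c$ has period $j$ as a cyclic sequence, it gives rise to exactly $j$ distinct closed reduced paths of length $k$ under cyclic rotation. Both $N(\cdot)$ and $\rho \circ \eta$ are products over edges, hence invariant under cyclic rotation, and they satisfy $N(Q^c) = N(Q)^c$ and $\rho(\eta(Q^c)) = \rho(\eta(Q))^c$. Substituting the resulting identity
\[
\operatorname{tr}(W_\rho^k) = \sum_{\substack{[Q] \in \mathfrak{P}(X) \\ L(Q) \mid k}} L(Q)\,\bigl(N(Q)\rho(\eta(Q))\bigr)^{k/L(Q)}
\]
into the log expansion and reindexing by $j = L(Q)$, $c = k/j$ collapses the double sum to
\[
-\log\det(I - W_\rho) = \sum_{[Q] \in \mathfrak{P}(X)} \sum_{c \geq 1} \frac{1}{c}\bigl(N(Q)\rho(\eta(Q))\bigr)^c = \sum_{[Q] \in \mathfrak{P}(X)} -\log\bigl(1 - N(Q)\rho(\eta(Q))\bigr),
\]
which exponentiates to the desired equality. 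The main obstacle is not analytic but combinatorial: keeping careful track of how closed reduced paths of length $k$ correspond to primes together with a power and a cyclic starting point, and verifying that the weights $N$ and $\rho\circ\eta$ are genuinely cyclic-rotation invariants. Both invariances follow from the product/homomorphism structure, so once the bookkeeping is set up correctly the proof proceeds mechanically.
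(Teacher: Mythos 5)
Your proof is correct and is the standard trace-of-powers argument: the paper states this result only as a citation to Terras (Theorem 19.3) without reproducing a proof, and your argument is essentially the one given in that reference (and in Stark--Terras) for the two-term determinant formula. The key points --- that the nonvanishing terms of $\operatorname{tr}(W_\rho^k)$ are exactly the closed reduced (backtrackless and tailless) paths of length $k$, that each contributes $N(P)\rho(\eta(P))$ because every edge of the cycle occurs exactly once as a first index of a matrix entry, and that each prime $[Q]$ with $L(Q)\mid k$ accounts for precisely $L(Q)$ rotations by primitivity --- are all handled correctly.
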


There is an analogous three-term determinant formula for the Artin--Ihara $L$-function. Let $n$ be the number of vertices of $X$, and let $Q$ be the $n\times n$ valency matrix~\eqref{eq:QA}. Choose an orientation on $X$ and define the $n\times n$ \emph{$\rho$-twisted adjacency matrix} $A_{\rho}$ as follows:
\[
(A_{\rho})_{uv}=\sum_{e:s(e)=u,t(e)=v}\rho(\eta(e))+\sum_{e:s(e)=v,t(e)=u}\overline{\rho(\eta(e))}.
\]
We emphasize that the sum is taken using the chosen orientation, in other words each edge contributes only once. We also note that $A_{\rho}$ does not depend on the choice of orientation, because $\rho:G\to \mathbb{C}^*$ takes values in the unit circle and therefore
\[
\rho(\eta(\oe))=\rho(-\eta(e))=\rho(\eta(e))^{-1}=\overline{\rho(\eta(e))}.
\]
In complete analogy to Equation~\eqref{eq:threetermzeta}, we have the following result.

\begin{theorem} [Theorem 18.15 in~\cite{2010Terras}] Let $p:\tX\to X$ be a free $G$-cover with abelian Galois group $G$ and let $\rho:G\to \mathbb{C}^*$ be a character of $G$. The Artin--Ihara $L$-function of $X$ satisfies
\begin{equation}
L(s,X,\rho)^{-1}=(1-s^2)^{g(X)-1}\det[I_n-sA_\rho+s^2(Q-I_n)].
\label{eq:threetermL}
\end{equation}
\label{thm:ArtinL}
\end{theorem}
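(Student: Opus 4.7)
My plan is to derive the three-term formula from the two-term formula (Theorem \ref{thm:edgeL}) by a block-matrix determinant manipulation, adapting Bass's original argument to the twisted setting.

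\smallskip

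\textbf{Step 1: Specialize to the Ihara case.} By the specialization rule \eqref{eq:specializationL}, setting $w_{ef}=s$ in Theorem \ref{thm:edgeL} gives
\[
L(s,\tX/X,\rho)^{-1}=\det(I_{2m}-sM_\rho),
\]
where $M_\rho$ is the $2m\times 2m$ matrix with $(M_\rho)_{ef}=\rho(\eta(e))$ if $t(e)=s(f)$ and $f\neq \oe$, and $0$ otherwise. The goal is to reduce this $2m\times 2m$ determinant to an $n\times n$ determinant, picking up the factor $(1-s^2)^{g-1}$.

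\smallskip

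\textbf{Step 2: Twisted incidence matrices.} Fix an orientation on $X$ and let $\mathcal{E}$ denote the set of $2m$ oriented edges (each unoriented edge appearing with both orientations). Define $n\times 2m$ matrices $S$ and $T_\rho$ by
\[
S_{v,e}=\begin{cases}1,&s(e)=v\\0,&\text{else}\end{cases},\qquad (T_\rho)_{v,e}=\begin{cases}\rho(\eta(e)),&t(e)=v\\0,&\text{else,}\end{cases}
\]
and let $J$ be the $2m\times 2m$ involution matrix $J_{e,\oe}=1$. One checks directly from the definitions that
\[
T_\rho S^{t}=A_\rho,\qquad SS^{t}=TT^{t}=Q,\qquad M_\rho=T_\rho^{t}S-J\cdot(\text{twist diag}),
\]
the latter relation being where the correction by $J$ (encoding the forbidden backtracking $f=\oe$) enters. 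The key compatibility to verify carefully is $JT_\rho^{t}=\overline{S}^{t}$ (using $\eta(\oe)=-\eta(e)$ and $\rho(-g)=\overline{\rho(g)}$), which is what makes $A_\rho$ Hermitian and orientation-independent, as remarked in the excerpt.

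\smallskip

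\textbf{Step 3: The Bass block identity.} Following Bass, I would establish the $(n+2m)\times(n+2m)$ block factorization
\[
\begin{pmatrix} I_n & 0 \\ -sT_\rho^{t} & I_{2m} \end{pmatrix}
\begin{pmatrix} I_n & sS \\ 0 & I_{2m}-sM_\rho+sJ \end{pmatrix}
=
\begin{pmatrix} I_n & sS \\ -sT_\rho^{t} & (1-s^2)I_{2m}+sJ \end{pmatrix}
\]
and, independently,
\[
\begin{pmatrix} I_n-sA_\rho+s^2(Q-I_n) & sS-s^2T_\rho^{t,J} \\ 0 & I_{2m}+sJ \end{pmatrix}
=
\begin{pmatrix} I_n & -sS \\ 0 & I_{2m} \end{pmatrix}^{-1}\cdot (\text{same right side}).
\]
Here the identities $T_\rho S^t=A_\rho$, $SS^t=Q$, and $JT_\rho^t=\overline{S}^t$ from Step 2 are precisely what make the two factorizations yield the same product matrix. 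Taking determinants of both sides and canceling yields
\[
\det(I_{2m}-sM_\rho)\cdot\det(I_{2m}+sJ)=\det(I_n-sA_\rho+s^2(Q-I_n))\cdot\det(I_{2m}+sJ)\cdot(1-s^2)^{m-n}.
\]
Using $\det(I_{2m}+sJ)=(1-s^2)^m$ (since $J^2=I$ has eigenvalues $\pm 1$ with equal multiplicity $m$), the $(1-s^2)^m$ factors cancel, leaving
\[
\det(I_{2m}-sM_\rho)=(1-s^2)^{m-n}\det(I_n-sA_\rho+s^2(Q-I_n)).
\]

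\smallskip

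\textbf{Step 4: Identify the exponent.} Since $g(X)=m-n+1$, we have $m-n=g(X)-1$, and substituting into Step 3 with the result of Step 1 yields \eqref{eq:threetermL}.

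\smallskip

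\textbf{Main obstacle.} The delicate point is Step 3: verifying the exact form of the block identity with the twisted matrices. In the untwisted case this is Bass's 1992 identity, but here one must carefully track how the character $\rho$ interacts with the edge-reversal involution $J$ through the relation $\rho(\eta(\oe))=\overline{\rho(\eta(e))}$. This is what ensures $A_\rho$ is Hermitian and that the factor $JT_\rho^t=\overline{S}^t$ (and not something asymmetric in $\rho$) appears, which is the essential algebraic input that makes the reduction from $2m\times 2m$ to $n\times n$ work. An alternative route, which I would consider if the direct block-matrix computation becomes too intricate, is to apply the three-term formula for $\zeta(s,\tX)$ on the free cover $\tX$ (whose adjacency matrix is a $|G|$-fold block matrix respecting the $G$-action), and to diagonalize using the character decomposition; the factor corresponding to each $\rho$ then yields \eqref{eq:threetermL} for that character.
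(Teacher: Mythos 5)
The paper does not prove this statement: it is imported verbatim from Terras's book, where it is established by essentially the route you propose (an ``Artinized'' version of Bass's block-determinant proof of the three-term formula). So your overall strategy is the standard and correct one, and Steps 1 and 4 are fine. The difficulty is that the computational core, Steps 2--3, does not check out as written -- and you yourself flag it as unverified, which is exactly where the content of the theorem lives.

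Concretely: (i) the claimed compatibility $JT_\rho^{t}=\overline{S}^{t}$ is false. The $(e,v)$ entry of $JT_\rho^{t}$ is $(T_\rho)_{v,\overline{e}}=\overline{\rho(\eta(e))}$ when $s(e)=v$, whereas $(\overline{S}^{t})_{e,v}=1$ when $s(e)=v$; the correct relation is $JT_\rho^{t}=\overline{S_\rho}^{\,t}$ with $S_\rho$ the \emph{twisted} source matrix. Likewise $T_\rho S^{t}=\overline{A_\rho}$, not $A_\rho$. These are not cosmetic slips: Bass's identity closes up only because of exact relations $JT^{t}=S^{t}$, $JS^{t}=T^{t}$, and in the twisted setting one must fix where the twist sits and carry it consistently through $J$. (ii) The first block factorization fails: the $(2,2)$ entry of the left-hand product is $-s^{2}T_\rho^{t}S+I_{2m}-sM_\rho+sJ$, which does not reduce to $(1-s^{2})I_{2m}+sJ$; note also that the backtracking correction is the twisted involution $J_\rho$ with $(J_\rho)_{e,\overline{e}}=\rho(\eta(e))$, not $J$. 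The second factorization contains the undefined symbol $T_\rho^{t,J}$. (iii) The determinant equation you extract is circular: $\det(I_{2m}+sJ)$ cancels identically from both sides, so the display is the desired conclusion restated rather than derived. For reference, the identity that actually works (untwisted case) is
\[
\begin{pmatrix}I_n & 0\\ T^{t} & I_{2m}\end{pmatrix}
\begin{pmatrix}(1-s^{2})I_n & sS\\ 0 & I_{2m}-sW_1\end{pmatrix}
=
\begin{pmatrix}I_n-sA+s^{2}(Q-I_n) & sS\\ 0 & I_{2m}+sJ\end{pmatrix}
\begin{pmatrix}I_n & 0\\ T^{t}-sS^{t} & I_{2m}\end{pmatrix},
\]
verified using $JT^{t}=S^{t}$, $JS^{t}=T^{t}$, $ST^{t}=A$, $SS^{t}=Q$, $W_1+J=T^{t}S$; taking determinants gives $(1-s^{2})^{n}\det(I-sW_1)=(1-s^{2})^{m}\det(I_n-sA+s^{2}(Q-I_n))$, whence the exponent $m-n=g-1$. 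Your task is to twist this identity consistently, which is doable but is precisely what you have deferred. Finally, the fallback you sketch (block-diagonalizing the three-term determinant of $\tX$ by characters) only shows that the \emph{product} over all $\rho$ of the right-hand sides of \eqref{eq:threetermL} equals $\zeta(s,\tX)^{-1}$; equality of two products does not identify the individual factors, so it does not by itself prove the formula for each $\rho$.
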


Finally, we have the following relationship between the $L$-functions of a free $G$-cover $p:\tX\to X$ and the zeta function of $\tX$. First, it is clear that the $L$-functions valued at the trivial character are the corresponding zeta functions of $X$. Now let $\zeta_E(w_{\te\tf},\tX)$ be the edge zeta function on $\tX$, whose variables $w_{\te\tf}$ are indexed by pairs of oriented edges $\te,\tf$ of $\tX$ such that $t(\te)=s(\tf)$ and $\tf\neq \overline{\te}$. The \emph{$X$-specialization} $\zeta_E(w_{ef},\tX)$ of $\zeta_E(w_{\te\tf},\tX)$ is obtained by setting $w_{\te\tf}=w_{ef}$,
where $e=p(\te)$ and $f=p(\tf)$ are the corresponding edges of $X$. Similarly, the \emph{$X$-specialization} $\zeta_M(s,x_e,\tX)$ of the metric zeta function $\zeta_M(s,x_{\widetilde{e}},\tX)$ of $\tX$ is defined by setting $x_{\widetilde{e}}=x_e$, where $e=p(\widetilde{e})$, and there is no need to specialize the Ihara zeta function of $\tX$. 

\begin{theorem}[Corollary 18.11 and Theorem 19.3 in~\cite{2010Terras}] Let $p:\tX\to X$ be a free $G$-cover with abelian Galois group $G$, and let $\widehat{G}'$ be the set of nontrivial characters of $G$. The $X$-specialized edge and metric zeta functions of $\tX$ and the Ihara zeta function of $\tX$ are expressed in terms of the corresponding $L$-functions of $p:\tX\to X$ as follows:
\begin{align}
\zeta_E(w_{ef},\tX)&=\zeta_E(w_{ef},X)\prod_{\rho\in \widehat{G}'}L_E(w_{ef},\tX/X,\rho),\\
\zeta_M(s,x_e,\tX)&=\zeta_M(s,x_e,X)\prod_{\rho\in \widehat{G}'}L_M(s,x_e,\tX/X,\rho),\label{eq:metriczetafactorization}\\
\zeta(s,\tX)&=\zeta(s,X)\prod_{\rho\in \widehat{G}'}L(s,\tX/X,\rho).
\end{align}
\end{theorem}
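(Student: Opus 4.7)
The plan is to prove all three identities simultaneously by a factor-by-factor comparison of the Euler products over primes of $X$. Since both the metric and Ihara versions follow from the edge version by substituting $w_{ef}=s^{x_e}$ and then $x_e=1$ (using (\ref{eq:specializationzeta}) and (\ref{eq:specializationL}) together with the corresponding identities for $\tX$), I would focus on the edge zeta identity. It is also convenient to include the trivial character, so that it suffices to establish
\begin{equation*}
\zeta_E(w_{ef},\tX)=\prod_{\rho\in\widehat{G}}L_E(w_{ef},\tX/X,\rho),
\end{equation*}
since the $\rho=1$ factor is exactly $\zeta_E(w_{ef},X)$.

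The first key step is to understand how primes of $\tX$ lie above a fixed prime $[P]\in\mathfrak P(X)$ when $p:\tX\to X$ is a free cover. Let $f=f(P)$ be the order in $G$ of the Frobenius element $\eta(P)$; this depends only on the equivalence class of $P$. Lifting $P$ beginning at any $\tv\in p^{-1}(s(P))$ produces a reduced path on $\tX$ that terminates at $\eta(P)\cdot\tv$, so $P^f$ is the smallest power that lifts to a closed path. I would then show that $P^f$ has exactly $|G|/f$ inequivalent lifts $[\widetilde P_1],\ldots,[\widetilde P_{|G|/f}]$, each itself primitive on $\tX$: two lifts starting at $\tv$ and $g\cdot\tv$ are cyclic rotations of one another iff $g\in\langle\eta(P)\rangle$, and a proper period of a lift would force $\eta(P)^{f/d}=0$ for some $d>1$, contradicting the minimality of $f$. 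Under the $X$-specialization $w_{\te\tf}=w_{p(\te)p(\tf)}$, each lift satisfies $N(\widetilde P_i)=N(P)^{f}$, so the factor of $\zeta_E(w_{ef},\tX)$ coming from primes over $[P]$ equals $(1-N(P)^f)^{-|G|/f}$.

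On the $L$-function side, as $\rho$ ranges over $\widehat G$ the value $\rho(\eta(P))$ runs through each $f$-th root of unity exactly $|G|/f$ times, since the map $\widehat G\to \widehat{\langle\eta(P)\rangle}$ is surjective with kernel of size $|G|/f$. The cyclotomic identity $\prod_{\zeta^f=1}(1-\zeta x)=1-x^f$ then gives
\begin{equation*}
\prod_{\rho\in\widehat{G}}\bigl(1-\rho(\eta(P))N(P)\bigr)^{-1}=\bigl(1-N(P)^{f}\bigr)^{-|G|/f},
\end{equation*}
matching the contribution computed above. Taking the product over all $[P]\in\mathfrak P(X)$ yields the edge identity, and the metric and Ihara versions follow by specialization.

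The most delicate point is the prime-lifting count: verifying that $P^f$ has exactly $|G|/f$ inequivalent lifts and that each is primitive on $\tX$. This is really a statement about the covering space theory of the free cover $p$ applied to the rose-shaped graph of directed edge sequences, and one must be careful that the starting-vertex cyclic rotation equivalence on $X$ interacts correctly with the orbit structure of $\langle\eta(P)\rangle$ on the fiber. Once this bookkeeping is in place, the algebraic identity is immediate.
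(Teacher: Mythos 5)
Your argument is correct. Note that the paper does not prove this statement at all --- it is imported verbatim from Terras (Corollary 18.11 and Theorem 19.3) --- so there is no internal proof to compare against; what you have supplied is the classical Euler-product proof, the graph analogue of factoring the Dedekind zeta function of an abelian extension into Dirichlet $L$-functions. The cited source instead derives the edge factorization from a determinant identity (a block-diagonalization of the edge adjacency matrix of $\tX$ with respect to the $G$-action, via induced representations), which works uniformly for non-abelian $G$ and higher-dimensional $\rho$; your prime-splitting route is more elementary and self-contained in the abelian case, which is all this paper uses. The delicate point you flag is handled correctly: a cyclic rotation of a lift of $P^{f}$ by $m$ edges is again a lift of $P^{f}$ only when $L(P)\mid m$ (by primitivity of $P$), so the rotation classes of lifts are exactly the orbits of $\langle\eta(P)\rangle$ on the fiber $p^{-1}(s(P))$, giving $|G|/f$ classes, and primitivity of each lift follows since a proper $d$-th power structure would force $(f/d)\,\eta(P)=0$ with $d>1$. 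Two small points worth making explicit in a final write-up: first, the converse direction, namely that every prime of $\tX$ arises this way (the projection of a primitive reduced closed path of $\tX$ is $P^{k}$ for a unique prime $[P]$ of $X$; closedness upstairs forces $f\mid k$, and primitivity upstairs forces $k=f$); second, that both products converge as formal power series because only finitely many primes have any given length, so the regrouping of Euler factors by fibers is legitimate.
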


\subsection{The zeta function and the Jacobian} We now express the Jacobian polynomial of a graph in terms of its metric zeta function. First, we recall a theorem of Northshield~\cite{1998Northshield} computing the order of the Jacobian in terms of the Ihara zeta function.

\begin{proposition} The reciprocal of the Ihara zeta function of a graph $X$ of genus $g$ has the following Taylor expansion at $s=1$:
\begin{equation}
\zeta(s,X)^{-1}=2^{g}(-1)^{g+1}(g-1)|\Jac(X)|(s-1)^g+O\left((s-1)^{g+1}\right).
\label{eq:s=1zeta}
\end{equation}
\label{prop:metricclassnumberformula}
\end{proposition}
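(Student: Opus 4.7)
The starting point is the three-term determinant formula of Theorem~\ref{thm:Iharazeta}:
\[
\zeta(s,X)^{-1}=(1-s^2)^{g-1}\det M(s),\qquad M(s)=I_n-sA+s^2(Q-I_n).
\]
The factor $(1-s^2)^{g-1}=(-1)^{g-1}(s-1)^{g-1}(s+1)^{g-1}$ contributes $(-1)^{g-1}2^{g-1}(s-1)^{g-1}+O((s-1)^g)$ at $s=1$, so the problem reduces to extracting the linear term of $\det M(s)$ at $s=1$. A direct evaluation gives $M(1)=I_n-A+Q-I_n=Q-A=L$, the Laplacian of $X$. Since $X$ is connected, $L$ has a simple zero eigenvalue with kernel spanned by $\mathbf{1}=(1,\ldots,1)^T$, so $\det L=0$ and it remains to compute the derivative $\frac{d}{ds}\det M(s)\bigl|_{s=1}$.

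The key tool is Jacobi's formula $\frac{d}{ds}\det M(s)=\operatorname{tr}(\operatorname{adj}(M(s))\,M'(s))$, together with the two structural facts: (i) $M'(1)=-A+2(Q-I_n)$, and (ii) since $L$ is symmetric of rank $n-1$ with kernel spanned by $\mathbf{1}$, its adjugate is rank one and symmetric, so $\operatorname{adj}(L)=\kappa\,\mathbf{1}\mathbf{1}^T$ for some scalar $\kappa$. The diagonal entries of $\operatorname{adj}(L)$ are the principal $(n-1)\times(n-1)$ cofactors, and Kirchhoff's matrix-tree theorem (used here as in~\eqref{eq:matrixtree}) identifies each of them with $|\Jac(X)|$. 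Therefore $\kappa=|\Jac(X)|$ and $\operatorname{adj}(L)=|\Jac(X)|\,\mathbf{1}\mathbf{1}^T$.

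Combining these, the linear coefficient equals
\[
\operatorname{tr}\!\bigl(|\Jac(X)|\,\mathbf{1}\mathbf{1}^T(-A+2Q-2I_n)\bigr)=|\Jac(X)|\cdot\mathbf{1}^T(-A+2Q-2I_n)\mathbf{1}.
\]
I then plug in $\mathbf{1}^T A\mathbf{1}=2|E(X)|$, $\mathbf{1}^T Q\mathbf{1}=2|E(X)|$, $\mathbf{1}^T\mathbf{1}=n$, so the scalar becomes $2|E(X)|-2n=2(g-1)$ by the genus formula $g=|E(X)|-n+1$. Hence $\det M(s)=2(g-1)|\Jac(X)|(s-1)+O((s-1)^2)$, and multiplying by the expansion of $(1-s^2)^{g-1}$ gives
\[
\zeta(s,X)^{-1}=(-1)^{g-1}2^{g}(g-1)|\Jac(X)|(s-1)^g+O((s-1)^{g+1}),
\]
which is the claimed identity since $(-1)^{g-1}=(-1)^{g+1}$.

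The only delicate step is the identification $\operatorname{adj}(L)=|\Jac(X)|\,\mathbf{1}\mathbf{1}^T$; once this is in place, everything else is bookkeeping. One subtlety worth flagging is the case $g=0$ (i.e.\ $X$ a tree), in which the stated expansion reads $0+O((s-1))$, consistent with $\zeta(s,X)=1$ identically when $|E|<|V|$; and the case $g=1$, where the leading term vanishes and one must track $O((s-1)^2)$ directly---but for the proposition as stated, both special cases are absorbed into the error term.
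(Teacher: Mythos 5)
Your argument is correct, but it is worth noting that the paper does not prove this proposition at all: it is recalled from Northshield~\cite{1998Northshield} and used as a black box (the paper's own contribution is the metric upgrade in Proposition~\ref{prop:s=1zeta}, whose proof reduces to the present statement by subdividing edges). Your derivation --- expanding $(1-s^2)^{g-1}=(-1)^{g-1}2^{g-1}(s-1)^{g-1}+O((s-1)^g)$, observing $M(1)=L$, applying Jacobi's formula with the matrix--tree identity $\operatorname{adj}(L)=|\Jac(X)|\,\mathbf{1}\mathbf{1}^{T}$, and evaluating $\mathbf{1}^{T}(-A+2Q-2I_n)\mathbf{1}=2|E(X)|-2n=2(g-1)$ --- is the standard proof of Northshield's formula, is complete, and supplies a self-contained treatment that the paper delegates to a citation; it also correctly covers $g=1$, where the conclusion degenerates to $\zeta(s,X)^{-1}=O((s-1)^2)$ because your computed linear coefficient of $\det M(s)$ vanishes. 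Two small inaccuracies in your closing remarks: for $g=0$ the stated leading coefficient is $2^{0}(-1)^{1}(0-1)|\Jac(X)|=1$, not $0$, which is exactly consistent with $\zeta(s,X)\equiv 1$ for a tree; and in that case the prefactor $(1-s^2)^{-1}$ has a simple pole at $s=1$, so the final multiplication of expansions requires the (easy) observation that the simple zero of $\det M(s)$ cancels it. Neither point affects the cases the paper actually uses.
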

In particular, if $g\geq 2$ then $\zeta(s,X)^{-1}$ has a zero of order $g$ at $s=1$, and the leading order term computes $|\Jac(X)|$. We now upgrade Northshield's result to compute the Jacobian polynomial in terms of the metric zeta function, by sneaking in the edge lengths $x_e$. 

\begin{proposition} The reciprocal of the metric zeta function of a graph $X$ of genus $g$ has the following Taylor expansion at $s=1$:
\begin{equation}
\zeta_M(s,x_e,X)^{-1}=2^{g}(-1)^{g+1}(g-1)J_X(x_e)(s-1)^g+O\left((s-1)^{g+1}\right).
\label{eq:s=1metriczeta}
\end{equation}
\label{prop:s=1zeta}
\end{proposition}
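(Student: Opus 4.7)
The plan is to reduce to Northshield's Proposition~\ref{prop:metricclassnumberformula} by specializing the $x_e$ to positive integers via edge subdivision. As a preliminary step, I note that the Taylor coefficients of $\zeta_M(s,x_e,X)^{-1}$ at $s=1$ lie in $\QQ[x_e]_{e\in E(X)}$. Indeed, combining the two-term formula~\eqref{eq:twotermzeta} with the specialization~\eqref{eq:specializationzeta} shows that $\zeta_M(s,x_e,X)^{-1}=\det(I-W)$ is a polynomial in the monomials $s^{x_e}$, and the binomial expansion
\[
s^{x_e}=\bigl(1+(s-1)\bigr)^{x_e}=\sum_{k\geq 0}\binom{x_e}{k}(s-1)^k
\]
has coefficients that are polynomials in $x_e$ with rational coefficients.

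Next, for each tuple of positive integers $\un=\{n_e\}_{e\in E(X)}$, let $X_{\un}$ denote the subdivision of $X$. I would first establish the specialization identity
\[
\zeta(s,X_{\un})=\zeta_M(s,x_e,X)\big|_{x_e=n_e}.
\]
Every subdivision vertex has valency $2$, so a reduced closed path that enters a chain of subdivided edges must traverse it completely before exiting; this yields a length-preserving bijection between primes $[\tP]\in\mathfrak{P}(X_{\un})$ and primes $[P]\in\mathfrak{P}(X)$, under which the number of edges of $\tP$ equals $M(P)|_{x_e=n_e}$. Comparing Euler products in Definition~\ref{def:zeta} gives the identity. Applying Proposition~\ref{prop:metricclassnumberformula} to $X_{\un}$, together with the invariance $g(X_{\un})=g$ of the genus under subdivision and the identity $|\Jac(X_{\un})|=J_X(n_e)$ from~\eqref{eq:Jn}, I obtain
\[
\zeta_M(s,x_e,X)^{-1}\big|_{x_e=n_e}=2^g(-1)^{g+1}(g-1)J_X(n_e)(s-1)^g+O\bigl((s-1)^{g+1}\bigr)
\]
for every positive integer tuple $\un$.

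The final step is polynomial interpolation. The coefficient of $(s-1)^k$ in the expansion of $\zeta_M(s,x_e,X)^{-1}$ is a polynomial in the $x_e$'s that vanishes on all positive integer tuples when $k<g$, hence vanishes identically; the coefficient of $(s-1)^g$ is a polynomial agreeing with $2^g(-1)^{g+1}(g-1)J_X(x_e)$ on all positive integer tuples, so the two must coincide as polynomials. The main obstacle is the combinatorial identification of $\zeta(s,X_{\un})$ with $\zeta_M(s,x_e,X)|_{x_e=n_e}$: one must carefully verify that the compression map on reduced closed paths descends to a bijection on cyclic equivalence classes that preserves primitivity. Once this is in hand, the remainder is a routine interpolation argument applied to Northshield's theorem.
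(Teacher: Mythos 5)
Your proposal is correct and takes essentially the same route as the paper's own proof: both reduce to Northshield's formula by subdividing edges into chains of $n_e$ edges, observe that the Taylor coefficients of $\zeta_M(s,x_e,X)^{-1}$ at $s=1$ are polynomials in the $x_e$, and conclude by interpolation over all positive integer tuples. Your added care with the prime bijection for $X_{\un}$ and the binomial expansion of $s^{x_e}$ only fills in details the paper states without proof.
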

\begin{proof} The two-term determinant formula~\eqref{eq:twotermzeta} and the specialization~\eqref{eq:specializationzeta} imply that $\zeta_M(s,x_e,X)^{-1}$ is a polynomial in the quantities $s^{x_e}$. We need to compute the derivatives
\[
Z_k(x_e)=\left.\frac{\partial^k(\zeta_M(s,x_e,X)^{-1})}{\partial s^k}\right|_{s=1}
\]
for $k=0,\ldots,g$. Using the power rule, we see that $Z_k(x_e)$ is a degree $k$ polynomial in the $x_e$, which we can determine by finding its values for all integer inputs. 

As in~\eqref{eq:Jn}, let $\un=\{n_e\}_{e\in E(X)}$ be positive integers, and let $X_{\un}$ be the graph obtained from $X$ by replacing each edge $e\in E(X)$ with a chain of $n_e$ edges. There is a one-to-one correspondence between primes on $X$ and $X_{\un}$, but the path lengths on the latter are counted using the $n_e$. In other words, we compute the Ihara zeta function of $X_{\un}$ by substituting $x_e=n_e$ into the metric zeta function of $X$:
\[
\zeta(x,X_{\un})=\zeta_M(s,n_e,X).
\]
Equation~\eqref{eq:s=1metriczeta} then implies that for $k=0,\ldots,g-1$ we have
\[
Z_k(n_e)=\left.\frac{\partial^k(\zeta_M(s,n_e,X)^{-1})}{\partial s^k}\right|_{s=1}=
\left.\frac{\partial^k(\zeta(x,X_{\un})^{-1})}{\partial s^k}\right|_{s=1}=0.
\]
Since the $n_e$ are arbitrary positive integers, it follows that $Z_k(x_e)=0$ for $k=0,\ldots,g-1$. Similarly, using~\eqref{eq:s=1metriczeta} and~\eqref{eq:Jn} we see that
\[
Z_g(n_e)=\left.\frac{\partial^g(\zeta_M(s,n_e,X)^{-1})}{\partial s^k}\right|_{s=1}=
\left.\frac{\partial^g(\zeta(x,X_{\un})^{-1})}{\partial s^k}\right|_{s=1}=
g!2^g(-1)^{g+1}(g-1)J_X(n_e),
\]
therefore the left and right hand sides of this equality remain equal if the integers $n_e$ are replaced by the variables $x_e$. This completes the proof. 
\end{proof}

\section{The matroid of a Galois harmonic cover} \label{sec:matroid}

We now introduce our last technical tool, namely matroids. We do not review them here and refer the reader to a standard reference such as~\cite{2011Oxley}, and in any case we only use the elementary definitions of bases, independent sets, contraction, and duality. 

The starting point is the following observation. The Jacobian polynomial $J_X$ of a graph $X$ is a sum~\eqref{eq:JX} over the spanning trees $T$ of $X$. Hence $J_X$ is determined by the graphic matroid $\calM(X)$, whose bases are the spanning trees, or perhaps more accurately by the dual cographic matroid $\calM^*(X)$, since in~\eqref{eq:JX} we take products of the complementary edge lengths to spanning trees. Given a harmonic $G$-cover $p:\tX\to X$, the order of $\Jac(\tX)$ divides that of $\Jac(X)$ by Lagrange's theorem, therefore we may expect the Jacobian polynomial $J_{\tX}$ to be divisible by $J_X$. We show that this is indeed the case, and that the factors of the quotient are likewise expressible in terms of a matroid.

In~\cite{1982Zaslavsky}, Zaslavsky associated a matroid $\calM(X,\sigma)$ to a \emph{signed graph} $(X,\sigma)$, where $\sigma:E(X)\to \{\pm 1\}$ is a distribution of positive or negative signs on the edges of $X$. He then generalized this construction (see~\cite{1989Zaslavsky} and~\cite{1991Zaslavsky}) to associate a \emph{bias matroid} $\calM(X,\eta)$ to a \emph{gain graph}, which, in our terminology, is a graph $X$ together with a fixed $G$-voltage assignment $[\eta]\in H^1(X,G)$ (the case of signed graphs corresponds to $G=\ZZ/2\ZZ$). To define the contractions of $\calM(X,\eta)$ in Zaslavsky's framework, it is necessary to introduce additional objects called \emph{half edges} and \emph{loose edges}. In this section, we give an elementary interpretation of Zaslavsky's matroid in terms of the free $G$-cover $p:\tX\to X$ associated to the pair $(X,\eta)$. We then show that this interpretation works for dilated $G$-covers as well, and obviates the need to introduce new edge types. As in the rest of the paper, we assume $G$ to be abelian, but our results readily generalize to non-abelian groups.

\begin{definition} Let $p:\tX\to X$ be a harmonic $G$-cover. A subgraph $Y\subseteq X$ is called \emph{$G$-nontrivial} if the restricted cover $p|_{p^{-1}(Y)}:p^{-1}(Y)\to Y$ is not isomorphic to the trivial $G$-cover.
    
\end{definition}

We now imagine removing edges from $X$ one by one. This may split $X$ into connected components, and eventually (unless each vertex of $X$ is dilated) we will obtain a connected component that is $G$-trivial. We consider all ways of removing edges from $X$ such that this does not happen.

\begin{proposition} Let $p:\tX\to X$ be a nontrivial harmonic $G$-cover. The subsets
\begin{equation}
\calI(\calM^*(\tX/X))=\{F\subseteq E(X):\mbox{each connected component of }X\backslash F\mbox{ is $G$-nontrivial}\}
\label{eq:independent}
\end{equation}
are the independent sets of a matroid $\calM^*(\tX/X)$ with ground set $E(X)$ and rank
\begin{equation}
r(\calM^*(\tX/X))=g(X)-1+\left|\left\{v\in V(X):D(v)\neq 0\right\}\right|.
\label{eq:matroidrank}
\end{equation}
\label{prop:matroid}
\end{proposition}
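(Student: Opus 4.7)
The plan is to prove the proposition by reducing to Zaslavsky's bias matroid theory, first handling the free case directly and then extending to the dilated case via the free resolution of Lemma~\ref{lem:contraction} together with matroid deletion.

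First I would observe that a connected subgraph $Y \subseteq X$ is $G$-trivial iff every vertex of $Y$ is free and the restricted voltage class $[\eta|_Y]$ vanishes in $H^1(Y, G)$ --- equivalently, every closed path in $Y$ has trivial Frobenius. In the free case this matches Zaslavsky's notion of a balanced subgraph of the gain graph $(X, \eta)$, and Zaslavsky's bias matroid $\calM(X, \eta)$ has rank function $r(S) = |V(X)| - b(S)$, where $b(S)$ is the number of balanced components of the spanning subgraph $(V(X), S)$. Under our standing assumption that $p$ is a nontrivial cover of the connected graph $X$, we have $b(E(X)) = 0$, so $r(\calM(X, \eta)) = |V(X)|$. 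Dualizing yields $r^*(F) = |F| - b(X \setminus F)$, so $F$ is dual-independent iff $b(X \setminus F) = 0$, which is precisely \eqref{eq:independent}. The maximum dual rank is $|E(X)| - |V(X)| = g(X) - 1$, matching \eqref{eq:matroidrank} for $d = 0$.

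For the general case I would apply Lemma~\ref{lem:contraction} to obtain a free resolution $p_f : \tX_f \to X_f$ with free voltage assignment $\eta_f$, where $X_f$ is built from $X$ by attaching a set $L$ of loops at each dilated vertex $v$ (one per chosen generator of $D(v)$) with Frobenius elements generating $D(v)$, and $p$ is obtained from $p_f$ by contracting $L$. I would then argue $\calM^*(\tX/X) = \calM^*(\tX_f/X_f) \setminus L$ as matroids on the common ground set $E(X) = E(X_f) \setminus L$. The key bijection: for any $F \subseteq E(X)$, the connected components of $X_f \setminus F$ coincide with those of $X \setminus F$ since loops in $L$ do not change connectivity, and a component $Y$ of $X \setminus F$ is $G$-nontrivial in $p$ iff the corresponding component of $X_f \setminus F$ is $G$-nontrivial in $p_f$ --- if $Y$ contains a dilated vertex, the attached loops in $L$ provide unbalanced cycles in $X_f$, while if $Y$ is all-free then the voltages $\eta|_Y$ and $\eta_f|_Y$ agree on cycles in $Y$ (both endpoints of every edge in $Y$ are free, so there is no ambiguity in the lift). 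Since matroid deletion preserves matroid structure, this establishes $\calM^*(\tX/X)$ as a matroid.

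The rank formula follows by direct computation: the rank of $\calM^*(\tX/X)$ equals $r_{\calM^*(\tX_f/X_f)}(E(X)) = |E(X)| - b(L)$ by the free-case rank function applied to the complement. Here $b(L)$ counts balanced components of the spanning subgraph $(V(X_f), L)$, which consists of each vertex as an isolated component possibly decorated with loops from $L$: free vertices contribute balanced (cycle-free) components, while dilated vertices carry loops with nontrivial Frobenius by construction and hence contribute unbalanced components. Thus $b(L) = |V(X)| - d$, giving $r(\calM^*(\tX/X)) = |E(X)| - |V(X)| + d = g(X) - 1 + d$, as claimed. The main subtlety lies in verifying the deletion identification in the previous paragraph --- specifically that $G$-nontriviality conditions for $p$ and $p_f$ coincide on components of $X \setminus F$, which relies on a careful check that the choice of lift in the free resolution does not affect the combinatorics of balanced subgraphs lying entirely in the free locus.
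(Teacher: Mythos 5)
Your proposal is correct, and its overall architecture coincides with the paper's: identify $\calM(\tX/X)$ with Zaslavsky's bias matroid in the free case, then obtain the general case from the free resolution of Lemma~\ref{lem:contraction} by checking that $G$-nontriviality of components is unaffected by contracting the added loops, so that $\calM^*(\tX/X)$ is the deletion $\calM^*(\tX_f/X_f)\backslash L$. The genuine difference is in how the two arguments invoke Zaslavsky and, consequently, how they prove the rank formula~\eqref{eq:matroidrank}. The paper uses the cryptomorphic \emph{basis} description of the bias matroid (unions of unbalanced genus-one subgraphs spanning all vertices), gets the free-case rank from the counting Lemma~\ref{lem:counting}, and then establishes~\eqref{eq:matroidrank} by an induction on single-edge contractions with a several-case analysis of how both sides of the formula change. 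You instead invoke the \emph{rank function} $r(S)=|V(X)|-b(S)$ of the bias matroid, so that dual-independence of $F$ is literally the condition $b(X\backslash F)=0$ of~\eqref{eq:independent}, and the rank of the deletion is computed in one line as $|E(X)|-b(L)$, with $b(L)=|V(X)|-d$ read off from the fact that exactly the dilated vertices carry an unbalanced loop in $L$. This buys a substantially shorter and less error-prone proof of~\eqref{eq:matroidrank}, at the cost of relying on the rank-function cryptomorphism rather than the basis one; the paper's contraction induction, while longer, has the side benefit of exhibiting explicitly how the matroid behaves under contraction of individual edges, which is reused later in the proof of Theorem~\ref{thm:main1}. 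The one point you flag as a subtlety --- that the resolution's choice of lifts does not affect balance of all-free subgraphs --- is indeed the right thing to check, and it holds because the resolution leaves the voltage assignment untouched on $E(X)$ and only adds loops at dilated vertices.
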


\begin{proof} Since we are primarily interested in bases, we use $\calM^*(\tX/X)$ and its dual $\calM(\tX/X)$ interchangeably. We refer to $\calM(\tX/X)$ as the \emph{matroid of the $G$-cover} $p:\tX\to X$. We prove that $\calM^*(\tX/X)$ is a matroid by identifying its dual $\calM(\tX/X)$ with Zaslavsky's matroid $\calM(X,\eta)$ in the case when $p$ is free, and with its contractions for arbitrary $p$. 

We first give an explicit description of the bases of $\calM^*(\tX/X)$, in other words the maximal elements of $\calI(\calM^*(\tX/X))$. For this, we need to identify the minimal $G$-nontrivial subgraphs $Y\subseteq X$. Let $[\eta]\in H^1(X,D)$ be the dilated $G$-voltage assignment defining $p:\tX\to X$. If $Y$ contains a dilated vertex, then $Y$ is $G$-nontrivial by definition. If $Y$ has no dilated vertices, then the restricted cover $p|_{p^{-1}(Y)}:p^{-1}(Y)\to Y$ is free and is defined by the $G$-voltage assignment $[\eta|_Y]\in H^1(Y,G)$. The restricted cover $p|_{p^{-1}(Y)}$ is not isomorphic to the trivial cover if and only if $[\eta|_Y]\neq 0$, which happens if and only if there exists a cycle $C\subseteq Y$ such that the oriented sum of the $\eta_e$ along $e\in E(C)$ (the Frobenius element of $C$ in the terminology of Section~\ref{sec:zeta}) is not the identity.

Now let $Y\subseteq X$ be $G$-nontrivial. If $Y$ has two dilated vertices, then we can remove an edge on any path connecting them. If $Y$ has one dilated vertex and $g(Y)\geq 1$, then we can remove an edge from $Y$ without disconnecting it. If $Y$ has no dilation but $g(Y)\geq 2$, then we can remove an edge without disconnecting $Y$, while also preserving a cycle with nontrivial voltage. In all cases, the resulting connected components are also $G$-nontrivial. Hence we have proved the following lemma.

\begin{lemma} Let $F\subseteq E(X)$ and let $X\backslash F=X_1\sqcup\cdots\sqcup X_k $ be the decomposition into connected components. Then $F$ is a basis of $\calM^*(\tX/X)$ if and only if each $X_i$ is one of the following:
\begin{enumerate}

    \item A graph of genus one having no dilated vertices and such that the sum $\eta(X_i)$ of the voltages along the unique cycle of $X_i$ is not the identity. 
    \item A tree containing a unique dilated vertex.

\end{enumerate}

\label{lem:basis}
    
\end{lemma}

If $p:\tX\to X$ is a free $G$-cover, then there are no dilated vertices, so a basis of $\calM(\tX,X)$ (in other words, the complement of a basis of $\calM^*(\tX/X)$) is a union of genus one subgraphs of $X$, spanning all vertices and having nontrivial voltages along their unique circuits. By Theorem 2.1(g) in~\cite{1989Zaslavsky}, this is one of the cryptomorphic definitions of Zaslavsky's bias matroid, which proves that~\eqref{eq:independent} indeed defines a matroid on $E(X)$.

We now consider an arbitrary harmonic $G$-cover $p:\tX\to X$. By Lemma~\ref{lem:contraction}, there exists a free $G$-cover $p_f:\tX_f\to X_f$ such that $p$ is obtained from $p_f$ by contracting a set of edges $F_f\subset E(X_f)$. We show that $\calM^*(\tX/X)$ is a matroid by showing that it is obtained by deleting the set $F_f\subset E(X_f)$ from the matroid $\calM^*(\tX_f/X_f)$ (so that $\calM(\tX/X)$ is obtained from $\calM(\tX_f/X_f)$ by contracting $F_f$):
\[
\calM^*(\tX/X)=\calM^*(\tX_f/X_f)\backslash F_f,\quad \calM(\tX/X)=\calM(\tX_f/X_f)/F_f=(\calM^*(\tX_f/X_f)\backslash F_f)^*.
\]
Fix an independent set $F\in\calI(\calM^*(\tX_f/X_f)\backslash F_f)$, so that $F\in \calM^*(\tX_f/X_f)$ and $F\subseteq E(X_f)\backslash F_f$. Let $X_f\backslash F=(X_f)_1\sqcup \cdots\sqcup (X_f)_k$ be the connected component decomposition of the complement of $F$. The contracted edges lie on the $(X_f)_i$, therefore $X\backslash F=X_1\sqcup \cdots\sqcup X_k$, where $X_i$ is the contraction of $(X_f)_i$ along $F_f\cap E((X_f)_i)$. Whether or not a $G$-cover is trivial does change under contraction. Each $(X_f)_i$ is $G$-nontrivial, hence so is each $X_i$ and therefore $F\in \calI(\calM^*(\tX/X))$. Running this argument in reverse, we see that
\[
\calI(\calM^*(\tX/X))=\calI(\calM^*(\tX_f/X_f)\backslash F_f),
\]
hence $\calM^*(\tX/X)$ is indeed a matroid. 

We now prove formula~\eqref{eq:matroidrank} for the rank of $\calM^*(\tX/X)$ (which can also be found in Zaslavsky's work). If $p:\tX\to X$ is free, then $r(\calM^*(\tX/X))=g(X)-1$ follows from the following elementary counting argument, which we will use later and whose proof we omit:

\begin{lemma} Let $X$ be a graph of genus $g$, let $F\subseteq E(X)$ be a set of edges, and let $X\backslash F=X_1\sqcup\cdots\sqcup X_k$.
\begin{enumerate}
    \item If $|F|=g-1$, then either $g(X_i)=1$ for all $i$ or $g(X_i)=0$ for some $i$.
    \item Conversely, if $g(X_i)=1$ for all $i$, then $|F|=g-1$.
\end{enumerate}
\label{lem:counting}
\end{lemma}
We now use free $G$-covers as the base of an induction argument, and consider what happens to the rank of $\calM^*(\tX/X)$ when we contract a single edge $e\in E(X)$ to obtain the cover $p_e:\tX_e\to X_e$. There are several cases to consider.

If $e$ has distinct root vertices $u$ and $v$, then $g(X_e)=g(X)$. If $D(u)=0$ or $D(v)=0$ (or both), then $X_e$ has the same number of dilated vertices as $X$, and there exists a minimal $G$-nontrivial subgraph of $X$ containing $e$. Therefore $e$ does not lie in every basis of $\calM^*(\tX/X)$, hence $r(\calM^*(\tX/X))=r(\calM^*(\tX_e/X_e))$, and no terms in~\eqref{eq:matroidrank} change. On the other hand, if $D(u)\neq 0$ and $D(v)\neq 0$, then $X_e$ has one fewer dilated vertex than $X$, while $e$ lies in every basis of $\calM^*(\tX/X)$ (because $\{u,v,e\}$ is not minimally $G$-nontrivial), so that $r(\calM^*(\tX_e/X_e))=r(\calM^*(\tX/X))-1$. Hence both sides of~\eqref{eq:matroidrank} decrease by one.

The case when $e$ is a loop with root vertex $v$ is similar. The genus $g(X_e)=g(X)-1$ goes down by one. If $D(v)=0$ and $\eta(e)=0$, then $X$ and $X_e$ have the same number of dilated vertices. No subgraph containing $\{v,e\}$ is minimally $G$-nontrivial, hence $e$ must lie in every basis of $\calM^*(\tX/X)$. Therefore $r(\calM^*(\tX_e/X_e))=r(\calM^*(\tX/X))-1$, so that both sides of~\eqref{eq:matroidrank} decrease by one. If $D(v)=0$ and $\eta(e)\neq 0$, then $v$ is a dilated vertex in $X_e$, and $r(\calM^*(\tX_e/X_e))=r(\calM^*(\tX/X))$ since $\{v,e\}$ is a minimally $G$-nontrivial subgraph. In this case, both sides of~\eqref{eq:matroidrank} remain the same. Finally, if $v$ is dilated, then the number of dilated vertices does not change, but the rank decreases by one since $e$ lies in every basis of $\calM^*(\tX/X)$, so both sides of~\eqref{eq:matroidrank} decrease by one. This completes the proof.
\end{proof}

We now assume that the $G$-cover $p:\tX\to X$ is connected (hence nontrivial), and modify the definition of the matroid $\calM^*(\tX/X)$ by twisting the voltages by a nontrivial character $\rho$ of the Galois group $G$. Specifically, we replace $G$ and the dilation subgroups $D(v)$ by their images $\rho(G)$ and $\rho(D(v))$ in $\CC^*$, and the dilated $G$-voltage assignment $[\eta]\in H^1(X,D)$ by the dilated $\rho(G)$-voltage assignment $[\rho(\eta)]\in H^1(X,\rho(D))$. The result is the \emph{$\rho$-twisted matroid of the $G$-cover} $p:\tX\to X$.

\begin{proposition} Let $p:\tX\to X$ be a connected harmonic $G$-cover and let $\rho:G\to \mathbb{C}^*$ be a nontrivial character of $G$. There exists a matroid $\calM^*(\tX/X,\rho)$ with ground set $E(X)$ and rank
\[
r(\calM^*(\tX/X,\rho))=g(X)-1+\left|\left\{v\in V(X):\rho(D(v))\neq \{1\}\right\}\right|,
\]
whose bases are characterized by the following property. Let $F\subseteq E(X)$ be a set of edges, and let $X\backslash F=X_1\sqcup\cdots\sqcup X_k$ be the connected components of the complement. Then $F\in \calB(\calM^*(\tX/X,\rho))$ if and only if each $X_i$ is exactly one of the following:
\begin{enumerate}
    \item A graph of genus one such that $\rho(D(v))=\{1\}$ for all $v\in V(X_i)$, and such that $\rho(\eta(X_i))\neq 1$, where $\rho(\eta(X_i))$ is the oriented product of $\rho(\eta_e)$ along the unique cycle of $X_i$.
    \item A tree containing a unique vertex $v\in V(X_i)$ such that $\rho(D(v))\neq \{1\}$.
\end{enumerate}
\label{prop:twistedbases}
\end{proposition}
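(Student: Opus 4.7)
The plan is to realize $\calM^*(\tX/X,\rho)$ as the matroid furnished by Proposition~\ref{prop:matroid} for an auxiliary quotient harmonic cover whose Galois group is $\rho(G)\subseteq\CC^*$. Since $\rho$ factors through $G/\ker\rho\cong\rho(G)$, the natural construction is to quotient out $\tX$ by $K:=\ker\rho$: the restriction of the $G$-action on $\tX$ to $K$ is still free on edges, so $\tX_\rho:=\tX/K$ is a well-defined graph, $p$ factors as $\tX\to\tX_\rho\xrightarrow{p_\rho}X$, and $p_\rho$ inherits a residual $G/K$-action, which via $\rho$ becomes a $\rho(G)$-action.

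The main technical step is to verify that $p_\rho$ is a harmonic $\rho(G)$-cover with vertex fibers $G/(D(v)+K)\cong\rho(G)/\rho(D(v))$, dilation datum $\rho(D)=\{\rho(D(v))\}_{v\in V(X)}$, and dilated voltage class equal to the image $[\rho(\eta)]\in H^1(X,\rho(D))$ of $[\eta]$ under the coefficient map induced by $\rho$. This amounts to unwinding the local model~\eqref{eq:dilatedvertexfiber}--\eqref{eq:dilatedtargetfiber} for $p$ together with the elementary group-theoretic identities $(G/D(v))/K=G/(D(v)+K)$ and $(D(v)+K)/K\cong D(v)/(D(v)\cap K)$; although routine, this bookkeeping is where essentially all the content of the proof lives, and is the step I expect to be the main obstacle.

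Once this identification is in place, the rest is automatic. The graph $\tX_\rho$ is connected (as a quotient of connected $\tX$), and $\rho(G)\neq\{1\}$ since $\rho$ is nontrivial, so $p_\rho$ is a nontrivial harmonic $\rho(G)$-cover and Proposition~\ref{prop:matroid} produces a matroid $\calM^*(\tX_\rho/X)$ on $E(X)$, which I would take as the definition of $\calM^*(\tX/X,\rho)$. Its rank is given by~\eqref{eq:matroidrank} as $g(X)-1+|\{v:\rho(D(v))\neq\{1\}\}|$, matching the statement. The basis description follows from Lemma~\ref{lem:basis} applied to $p_\rho$: the $p_\rho$-dilated vertices are precisely those $v$ with $\rho(D(v))\neq\{1\}$, and the $\rho(G)$-Frobenius of a cycle $C\subseteq X_i$ is $\rho$ applied to the original $G$-Frobenius, namely $\rho(\eta(X_i))$. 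Hence the two types of complementary components allowed by Lemma~\ref{lem:basis} translate word-for-word into the two alternatives listed in the statement, completing the argument.
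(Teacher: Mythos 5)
Your proposal is correct and follows essentially the same route as the paper: both reduce to Proposition~\ref{prop:matroid} and Lemma~\ref{lem:basis} applied to the harmonic $\rho(G)$-cover of $X$ determined by the data $(\rho(D),[\rho(\eta)])$. The one place you diverge is in how that cover is produced and shown to be nontrivial: the paper builds it abstractly from the class $[\rho(\eta)]\in H^1(X,\rho(D))$ and proves nontriviality by contradiction (if all $\rho(D(v))$ were trivial and $[\rho(\eta)]=0$, one could choose $\eta_e\in\Ker\rho$ for all $e$, which would disconnect $\tX$), whereas you realize it as the quotient $\tX/\Ker\rho$, making nontriviality immediate since a connected cover cannot be the trivial (disconnected) one. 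Your version front-loads the routine verification that $\tX/\Ker\rho\to X$ is a harmonic $\rho(G)$-cover with vertex fibers $G/(D(v)+\Ker\rho)$ and voltage $[\rho(\eta)]$ — a check the paper sidesteps by working purely cohomologically — but the identities you cite are the right ones and the argument goes through.
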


\begin{proof} This follows directly from Proposition~\ref{prop:matroid} and Lemma~\ref{lem:basis}, provided that we show that the $\rho(G)$-cover of $X$ determined by the $\rho(G)$-voltage assignment $[\rho(\eta)]\in H^1(X,\rho(D))$ is not the trivial $\rho(G)$-cover. If $\rho(D(v))\neq \{1\}$ for some $v\in v(X)$, then the $\rho(G)$-cover associated to $[\rho(\eta)]$ is not free and hence not trivial. Now suppose that $\rho(D(v))=\{1\}$ for all $v\in v(X)$ and $[\rho(\eta)]=0$ in $H^1(X,\rho(D))$. This implies that we can choose a representative $\eta$ for the original $G$-voltage assignment $[\eta]\in H^1(X,D)$ such that $\eta_e\in \Ker \rho$ for all $e\in E(X)$. The local description~\eqref{eq:dilatedvertexfiber}-\eqref{eq:dilatedtargetfiber} then shows that the subgraph of $\tX$ consisting of the vertices $\tv_{[g]}$ and the edges $\te_{g}$ indexed by $g\in \Ker \rho$ is not connected to the rest of $\tX$. This contradicts the assumption that $\tX$ is connected, hence $[\rho(\eta)]\in H^1(X,\rho(D))$ determines a nontrivial $\rho(G)$-cover. The proof now follows from Proposition~\ref{prop:matroid}, by replacing $G$ with $\rho(G)$ everywhere.

\end{proof}

\section{The main result} \label{sec:main}

We are now ready to state and prove the main result of the paper. Let $p:\tX\to X$ be a harmonic $G$-cover with abelian Galois group $G$. Recall that the Jacobian polynomial $J_X(x_e)$ is defined as a sum~\eqref{eq:JX} over the spanning trees of $X$, in other words over the bases of the cographic matroid $\calM^*(X)$. We now define analogous sums over the bases of the matroids $\calM^*(\tX/X,\rho)$, weighted in terms of $\rho$ and the dilated $G$-voltage assignment $[\eta]\in H^1(X,D)$ defining the cover.

\begin{definition} Let $p:\tX\to X$ be a harmonic $G$-cover with abelian Galois group $G$ defined by a dilated $G$-voltage assignment $[\eta]\in H^1(X,D)$, and let $\rho:G\to \mathbb{C}^*$ be a nontrivial character.
\begin{enumerate}
    \item Let $F\in \calB(\calM^*(\tX/X,\rho))$ be a basis of the matroid $\calM^*(\tX/X,\rho)$ and let $X\backslash F=X_1\sqcup\cdots\sqcup X_k$ be the connected components of the complement. We define the \emph{weight} $w_\rho(F)$ as
    \[
    w_\rho(F)=w_\rho(X_1)\cdots w_\rho(X_k),
    \]
    where the weights of the connected components $X_i$ (see Proposition~\ref{prop:twistedbases}) are defined as follows:
    \begin{enumerate}
        \item If $X_i$ is a graph of genus one such that $\rho(D(v))=\{1\}$ for all $v\in V(X_i)$, then
        \[
        w_{\rho}(X_i)=|1-\rho(\eta(X_i))|^2,
        \]
        where $\eta(X_i)$ is the Frobenius element (the sum of the oriented voltages $\eta_e$) of the unique cycle of $X_i$.
        \item If $X_i$ is a tree with a unique vertex $v\in V(X_i)$ such that $\rho(D(v))\subset \CC^*$ is not the trivial subgroup, then
        \[
        w_\rho(X_i)=1.
        \]
    \end{enumerate}
    \item The \emph{weight} $w(\calM^*(\tX/X,\rho))$ of the matroid $\calM^*(\tX/X,\rho)$ is
    \begin{equation}
    w(\calM^*(\tX/X,\rho))=\sum_{F\in \calB(\calM^*(\tX/X,\rho))}w_\rho(F).
    \label{eq:weight}
    \end{equation}
    \item The \emph{weight polynomial} $P_{\tX/X,\rho}(x_e)$ of the matroid $\calM^*(\tX/X,\rho)$ is the degree $r(\calM^*(\tX/X,\rho))$ polynomial in the edge lengths of $X$ given by
    \begin{equation}
    P_{\tX/X,\rho}(x_e)=\sum_{F\in \calB(\calM^*(\tX/X,\rho))}w_\rho(F)\prod_{e\in F}x_e.
    \label{eq:weightpolynomial}
    \end{equation}
\end{enumerate}
\label{def:weights}
\end{definition}

We note that for a connected component $X_i$ of $X\backslash F$ of the first type, the sum $\eta(X_i)$ depends on a choice of orientation up to sign, but the quantity $w_\rho(X_i)$ is well-defined. We also observe that the coefficients of $P_{\tX/X,\rho}(x_e)$, and hence the weights $w(\calM^*(\tX/X,\rho))$, lie in the real cyclotomic field, and are only guaranteed to be integers if $\rho$ is valued in the sixth roots of unity. Finally, we make the following observation in the case when $p:\tX\to X$ is free. If $X_0\subseteq X$ is a genus one subgraph with $\rho(\eta(X_0))=1$, then $w_\rho(X_0)=0$. Hence the sum in~\eqref{eq:weightpolynomial} can be taken over all bases of the matroid $\calM^*(\tX/X)$, and indeed over all $(g-1)$-element subsets $F\subseteq E(X)$ such that each connected component of $X\backslash F$ has genus one.

We are now ready to state our main result. Recall that the Jacobian polynomials $J_{\tX}(x_{\te})$ and $J_X(x_e)$ have variables indexed by the edges of $\tX$ and $X$, respectively. To compare the two, we define the \emph{$X$-specialized Jacobian polynomial} $J_{\tX}(x_e)$ of $\tX$ by setting $x_{\te}=x_{p(\te)}$ for all edges $\te\in E(\tX)$. We show that $J_X(x_e)$ divides $J_{\tX}(x_e)$ and compute the quotient in terms of the matroids $\calM^*(\tX/X,\rho)$.

\begin{theorem}
    Let $p:\tX\to X$ be a harmonic $G$-cover of graphs with abelian Galois group $G$ of order $N$. The $X$-specialized Jacobian polynomial of $\tX$ factors as
    \begin{equation}
    J_{\tX}(x_e)=\frac{1}{N}\prod_{v\in V(X)}|D(v)|^{N/|D(v)|}J_X(x_e)\prod_{\rho\in \widehat{G}'}P_{\tX/X,\rho}(x_e), \label{eq:mainformula1}
    \end{equation}
    where $D(v)$ is the dilation group at the vertex $v\in V(X)$, $J_X$ is the Jacobian polynomial of $X$, $\widehat{G}'$ is the set of nontrivial characters of $G$, and $P_{\tX/X,\rho}(x_e)$ is the weight polynomial of the matroid $\calM^*(\tX/X,\rho)$.
    \label{thm:main1}
\end{theorem}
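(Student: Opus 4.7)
The plan is to follow the strategy outlined in the introduction: first prove the factorization for \emph{free} harmonic $G$-covers using the metric Artin--Ihara $L$-function, and then descend the identity to the general (non-free) case via edge contraction from a free resolution (Lemma~\ref{lem:contraction}).

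\emph{Step 1: the free case.} For a free $G$-cover $p:\tX\to X$ all dilation subgroups are trivial and the target identity reduces to $J_{\tX}(x_e)=\frac{1}{N}J_X(x_e)\prod_{\rho\in\widehat{G}'}P_{\tX/X,\rho}(x_e)$. Starting from the metric zeta factorization~\eqref{eq:metriczetafactorization}
\[
\zeta_M(s,x_e,\tX)^{-1}=\zeta_M(s,x_e,X)^{-1}\prod_{\rho\in\widehat{G}'}L_M(s,x_e,\tX/X,\rho)^{-1},
\]
I would Taylor-expand both sides at $s=1$. Proposition~\ref{prop:s=1zeta} supplies the leading coefficients of the two $\zeta_M^{-1}$ factors, so the key new input is an analogous expansion for the metric $L$-function: for a nontrivial character $\rho$,
\[
L_M(s,x_e,\tX/X,\rho)^{-1}=(-2)^{g(X)-1}P_{\tX/X,\rho}(x_e)(s-1)^{g(X)-1}+O\!\left((s-1)^{g(X)}\right).
\]
The proof would mirror that of Proposition~\ref{prop:s=1zeta}: polynomial interpolation over the integer edge-subdivisions $X_{\underline n}$ reduces the metric statement to its Ihara counterpart, which in turn is extracted from the three-term determinant formula~\eqref{eq:threetermL}. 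At $s=1$ the prefactor $(1-s^2)^{g(X)-1}$ contributes $(-2)^{g(X)-1}(s-1)^{g(X)-1}$, while $\det[Q-A_\rho]$ at $s=1$ has to be identified with the matroid weight $w(\calM^*(\tX/X,\rho))$ by a $\rho$-twisted matrix-tree theorem generalizing the Reiner--Tseng signed formula for $G=\ZZ/2\ZZ$. Equating the $(s-1)^{g(\tX)}$-coefficients on both sides of the factorization then yields the free case, with the identity $g(\tX)-1=N(g(X)-1)$ producing precisely the factor $1/N$.

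\emph{Step 2: descent by contraction.} By Lemma~\ref{lem:contraction}, an arbitrary harmonic $G$-cover $p:\tX\to X$ arises as an edge contraction of a free cover $p_f:\tX_f\to X_f$ obtained by attaching at each dilated vertex $v\in V(X)$ a system of loops $l_1,\ldots,l_c$ whose voltages generate $D(v)$. Applying Step~1 gives
\[
J_{\tX_f}(x_e)=\frac{1}{N}J_{X_f}(x_e)\prod_{\rho\in\widehat{G}'}P_{\tX_f/X_f,\rho}(x_e).
\]
To descend to $p$, contract the attached loops $l_k$ one by one using the rules of~\eqref{eq:Jedgecontraction}: a non-loop is contracted by setting $x_e=0$, a loop by dividing by $x_e$. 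Each loop $l_k$ on $X_f$ lifts to $N$ parallel edges in $\tX_f$ whose contraction collapses the fiber over $v$ by merging the $N$ preimage vertices of $\tX_f$ into the $N/|D(v)|$ preimage vertices of $\tX$ in groups of $|D(v)|$. The contracted subgraph at each merged vertex is the Cayley graph of $D(v)$ with respect to the chosen generators; specializing its Jacobian polynomial contributes a factor of $|D(v)|$ at each of the $N/|D(v)|$ merged vertices, producing exactly $|D(v)|^{N/|D(v)|}$. Simultaneously, a parallel contraction rule for the matroid weight polynomials, which I would establish separately by mirroring~\eqref{eq:Jedgecontraction}, transports $P_{\tX_f/X_f,\rho}$ to $P_{\tX/X,\rho}$. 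Combining the two sides through the contraction process delivers~\eqref{eq:mainformula1}.

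\emph{Main obstacle.} The principal technical difficulty is the $\rho$-twisted matrix-tree identity $\det[Q-A_\rho]=w(\calM^*(\tX/X,\rho))$ underpinning Step~1, and more generally its metric refinement identifying the determinantal polynomial in the edge lengths with $P_{\tX/X,\rho}(x_e)$. For $G=\ZZ/2\ZZ$ this follows from the classical signed matrix-tree theorem, but for arbitrary abelian $G$ one has to carry out a Cauchy--Binet-style expansion of the Hermitian matrix $Q-A_\rho$ that recognises the bases of $\calM^*(\tX/X,\rho)$ (genus-one components with $\rho$-nontrivial cycle and, in Step~2, trees containing a single $\rho$-dilated vertex) and reproduces the weights $|1-\rho(\eta(C))|^2$. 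A secondary but nontrivial point is the contraction rule for the weight polynomials needed in Step~2, which requires a careful matroidal argument showing that contracting an edge of $X_f$ commutes with the $\rho$-twisting at each loop attachment.
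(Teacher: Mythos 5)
Your Step 1 is essentially the paper's argument verbatim: the paper proves exactly the expansion you posit (Proposition~\ref{prop:Lclass}), including the $\rho$-twisted matrix--tree identity via the factorization $Q-A_\rho=B_\rho B_\rho^{\dagger}$ and Cauchy--Binet, followed by interpolation over the subdivisions $X_{\un}$. What you flag as the ``main obstacle'' is precisely the content of that proposition, and your proposed route to it is the one the paper takes, so Step 1 is sound.

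Step 2 contains a genuine error. You assert that contracting the preimages of the added loops at a dilated vertex $v$ collapses, at each of the $N/|D(v)|$ merged vertices, a copy of the Cayley graph of $D(v)$ whose spanning-tree count is $|D(v)|$, ``producing exactly $|D(v)|^{N/|D(v)|}$.'' This is false on two counts. First, the Cayley multigraph of $D(v)$ on the chosen generators does not have $|D(v)|$ spanning trees in general: already for $D(v)=\ZZ/2\ZZ\times\ZZ/2\ZZ$ with two generators the fiber component is a $4$-cycle with every edge doubled, which has $32$ spanning trees, not $4$; and for iterated contractions the paper's count $HM^{H/M-2}$ per component shows the discrepancy explicitly. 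Second, and more importantly, the factor $|D(v)|^{N/|D(v)|}$ in~\eqref{eq:mainformula1} does not come from the spanning-tree count of the contracted fibers alone: it is the \emph{ratio} between that count and the change in the product $\prod_{\rho}P_{\tX/X,\rho}$. Your claim that a contraction rule ``transports $P_{\tX_f/X_f,\rho}$ to $P_{\tX/X,\rho}$'' misses that this transport is not weight-preserving. When a loop $e_0$ with voltage $\eta_0$ is contracted, the characters split into three classes: for $\rho$ trivial on the current dilation group but with $\rho(\eta_0)\neq 1$, the leading coefficient of $P_{\tX/X,\rho}$ in $x_0$ is $|1-\rho(\eta_0)|^2$ times that of $P_{\tX_0/X_0,\rho}$ (a genus-one component of type (a) in Definition~\ref{def:weights} becomes a tree of type (b)), while for the other characters $e_0$ is a coloop and the polynomial merely picks up $x_0$. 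The product of the factors $|1-\rho(\eta_0)|^2$ over the first class is a cyclotomic norm computation ($\prod_{j=1}^{n-1}|1-\zeta^j|^2=n^2$), and only after cancelling it against the spanning-tree count $\left[HM^{H/M-2}\right]^{N/H}$ of the contracted fiber components does the net factor $H^{N/H}/M^{N/M}$ emerge. Without this character-by-character bookkeeping your Step 2 cannot produce the stated coefficient, so the descent argument as written does not close.
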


Plugging $x_e=1$ for all $e\in E(X)$, we obtain our main enumerative result:

\begin{theorem}[Theorem~\ref{thm:mainintro}]
    Let $p:\tX\to X$ be a harmonic $G$-cover of graphs with abelian Galois group $G$ of order $N$. The number $|\Jac(\tX)|$ of spanning trees of $\tX$ is equal to 
    \begin{equation}
    |\Jac(\tX)|=\frac{1}{N}\prod_{v\in V(X)}|D(v)|^{N/|D(v)|}|\Jac(X)|\prod_{\rho\in \widehat{G}'}w(\calM^*(\tX/X,\rho)), \label{eq:enumerativeformula}
    \end{equation}
    where $D(v)$ is the dilation group at the vertex $v\in V(X)$, $|\Jac(X)|$ is the number of spanning trees of $X$, $\widehat{G}'$ is the set of nontrivial characters of $G$, and $w(\calM^*(\tX/X,\rho))$ is the weight of the matroid $\calM^*(\tX/X,\rho)$.
    \label{thm:main2}
\end{theorem}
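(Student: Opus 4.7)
The plan is to deduce Theorem~\ref{thm:main2} from its polynomial refinement Theorem~\ref{thm:main1} by setting $x_e=1$ for every $e\in E(X)$; since $J_X(1)=|\Jac(X)|$ and $J_{\tX}(1)=|\Jac(\tX)|$, the enumerative identity~\eqref{eq:enumerativeformula} follows at once from~\eqref{eq:mainformula1}. The real work is therefore to prove the polynomial identity, which I would split into the free case, handled analytically via the factorization of the metric zeta function, and the general case, reduced to the free case by edge contraction.

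For the free case I would start from~\eqref{eq:metriczetafactorization} and compare both sides as Taylor series at $s=1$. Proposition~\ref{prop:s=1zeta} gives the leading coefficient of $\zeta_M(s,x_e,X)^{-1}$ in terms of $J_X(x_e)$ and the same statement applied to $\tX$ gives it in terms of $J_{\tX}(x_e)$; matching orders of vanishing forces each $L_M(s,x_e,\tX/X,\rho)^{-1}$ to vanish to order exactly $g(X)-1$, and comparing leading coefficients, using $g(\tX)-1=N(g(X)-1)$, yields
\[
J_{\tX}(x_e)=\frac{1}{N}\,J_X(x_e)\prod_{\rho\in\widehat{G}'}c_\rho(x_e),
\]
where $c_\rho(x_e)$ is a normalized leading Taylor coefficient of $L_M(s,x_e,\tX/X,\rho)^{-1}$. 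The prefactor $1/N$ emerges automatically from the ratio of the sign and power-of-$2$ normalizations in Proposition~\ref{prop:s=1zeta}, since $(g(\tX)-1)/(g(X)-1)=N$. The heart of the free case is then to identify $c_\rho(x_e)$ with the weight polynomial $P_{\tX/X,\rho}(x_e)$. Following the integer-specialization trick of Proposition~\ref{prop:s=1zeta}, I would specialize the $x_e$ to positive integers to reduce $L_M$ to the Artin--Ihara $L$-function of a subdivided cover and apply the three-term determinant formula~\eqref{eq:threetermL}: expanding $\det(I_n-sA_\rho+s^2(Q-I_n))$ around $s=1$ produces the determinant of the $\rho$-twisted Laplacian $Q-A_\rho$, and a Cauchy--Binet expansion of its principal minors reorganizes as a sum over edge subsets $F\subseteq E(X)$ whose surviving contributions come precisely from those $F$ for which each component of $X\setminus F$ is either a $\rho$-nontrivial cycle (yielding the factor $|1-\rho(\eta(X_i))|^2$ from the $\rho$-twisted circuit determinant) or a tree through a vertex $v$ with $\rho(D(v))\neq\{1\}$ (yielding the factor $1$). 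These are exactly the bases of $\calM^*(\tX/X,\rho)$ from Proposition~\ref{prop:twistedbases} with the weights of Definition~\ref{def:weights}.

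For the general case I would invoke Lemma~\ref{lem:contraction} to realize $p:\tX\to X$ as the contraction, along some set $F_f\subseteq E(X_f)$, of a free $G$-cover $p_f:\tX_f\to X_f$, apply the free case to $p_f$, and push the resulting identity down to $p$. The Jacobian polynomial behaves functorially under edge contraction by~\eqref{eq:Jedgecontraction}, and the argument in the proof of Proposition~\ref{prop:matroid} shows that $\calM^*(\tX/X,\rho)$ is the contraction of $\calM^*(\tX_f/X_f,\rho)$ along $F_f$, so $P_{\tX_f/X_f,\rho}$ specializes correctly to $P_{\tX/X,\rho}$. The vertex prefactor $\prod_v|D(v)|^{N/|D(v)|}$ should then emerge from careful bookkeeping of the loops added at each dilated vertex in Lemma~\ref{lem:contraction}: those loops account for the $N/|D(v)|$ orbits of $G$ on the enlarged fiber, each of size $|D(v)|$, and their contribution to the ratio of free and contracted Jacobian polynomials is exactly $|D(v)|^{N/|D(v)|}$.

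The hard part will be the matroidal identification of $c_\rho(x_e)$ with $P_{\tX/X,\rho}(x_e)$ in the free case: this is a $\rho$-twisted, weighted matrix-tree-style theorem, and demands a careful expansion of the $\rho$-twisted Laplacian determinant whose surviving terms precisely read off the Zaslavsky bias matroid structure with the correct weights. Once that identification is secured, the $g(\tX)-1=N(g(X)-1)$ bookkeeping produces $1/N$ automatically, the edge-contraction step is routine, and Theorem~\ref{thm:main2} follows by plugging in $x_e=1$.
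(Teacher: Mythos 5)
Your proposal is correct and follows essentially the same route as the paper: Theorem~\ref{thm:main2} is obtained from the polynomial identity of Theorem~\ref{thm:main1} by setting $x_e=1$, and that identity is proved exactly as you describe --- by comparing leading Taylor coefficients at $s=1$ in the factorization~\eqref{eq:metriczetafactorization}, identifying the leading coefficient of $L_M(s,x_e,\tX/X,\rho)^{-1}$ with $P_{\tX/X,\rho}(x_e)$ via the three-term determinant formula and a Cauchy--Binet expansion of the $\rho$-twisted Laplacian (Proposition~\ref{prop:Lclass}), and then reducing the dilated case to the free case by contracting the loops of a free resolution. The only point where the paper is more careful than your sketch is the contraction step, where the prefactor $\prod_{v}|D(v)|^{N/|D(v)|}$ is extracted by contracting one loop at a time and sorting the nontrivial characters into three types according to their behavior on $D(v_0)$ and $\eta_0$.
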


Before we give the proof of Theorem~\ref{thm:main1}, we consider three examples: the icosahedral $\ZZ/5\ZZ$-quotient described in Examples~\ref{ex:icosahedron1} and~\ref{ex:icosahedron2}, a $\ZZ/6\ZZ$-cover for which the ranks of the matroids $\calM^*(\tX/X,\rho)$ depend on the $\rho$, and an example suggesting how Theorem~\ref{thm:main1} may be generalized to the non-abelian case.

\begin{example} \label{ex:icosahedron3} Consider the icosahedral quotient $p:\tX\to X$ shown on Figure~\ref{fig:icosahedron} and described in Examples~\ref{ex:icosahedron1} and~\ref{ex:icosahedron2}. The dilation groups are $D(v_1)=D(v_4)=\ZZ/5\ZZ$ and $D(v_2)=D(v_3)=1$. The Galois group $G=\ZZ/5\ZZ$ has four nontrivial characters
\[
\rho_j:\ZZ/5\ZZ\to \CC^*,\quad \rho_j(k)=e^{2\pi ijk/5},\quad j=1,\ldots,4.
\]
The matroid $\calM^*(\tX/X)$ and the four specializations $\calM^*(\tX/X,\rho_j)$ all have rank 4, and in fact have the same bases: the only four-element subsets of $E(X)$ that are not bases are $\{e_1,e_2,e_3,e_4\}$ and $\{e_3,e_4,e_5,e_6\}$. Let $f_j=|1-\rho_j(1)|^2$ for $j=1,\ldots,4$, so that
\[
f_1=f_4=\frac{5-\sqrt{5}}{2},\quad f_2=f_3=\frac{5+\sqrt{5}}{2}.
\]
The weights $w_j(F)=w_{\rho_j}(F)$ of the 13 bases $F\in \calB(\calM^*(\tX/X,\rho_j))$ can be evaluated directly from the definition and are given in the following table:
\begin{center}
\begin{tabular}{c|c}
   $F$  & $w_j(F)$ \\
   \hline
   $\{e_1,e_2,e_3,e_5\}$ & $1$ \\
   \hline
   $\{e_1,e_2,e_3,e_6\}$ & $f_j$ \\
   \hline
   $\{e_1,e_2,e_4,e_5\}$ & $1$ \\
   \hline
   $\{e_1,e_2,e_4,e_6\}$ & $f_j$ \\
   \hline
   $\{e_1,e_2,e_5,e_6\}$ & $f_j$ \\
   \hline
   $\{e_1,e_3,e_4,e_5\}$ & $f_j$ \\
   \hline
   $\{e_1,e_3,e_4,e_6\}$ & $f_j^2$ \\
   \hline
   $\{e_1,e_3,e_5,e_6\}$ & $f_j$ \\
   \hline
   $\{e_1,e_4,e_5,e_6\}$ & $f_j$ \\
   \hline
   $\{e_2,e_3,e_4,e_5\}$ & $1$ \\
   \hline
   $\{e_2,e_3,e_4,e_6\}$ & $f_j$ \\
   \hline
   $\{e_2,e_3,e_5,e_6\}$ & $1$ \\
   \hline
   $\{e_2,e_4,e_5,e_6\}$ & $1$ 
\end{tabular}
\end{center}
Therefore, the weights of the four matroids $\calM^*(\tX/X,\rho_j)$ are
\[
w(\calM^*(\tX/X,\rho_j))=5+7f_j+f_j^2=\begin{cases}
30-6\sqrt{5}, & j=1,4,\\
30+6\sqrt{5}, & j=2,3.
\end{cases}
\]
By Equation~\eqref{eq:enumerativeformula}, the number of spanning trees of the icosahedral graph is
\[
|\Jac(\tX)|=\frac{1}{5}\cdot 5^1\cdot 1^5\cdot 1^5\cdot 5^1\cdot 2\cdot (30-6\sqrt{5})^2\cdot(30+6\sqrt{5})^2=5184000.
\]

\end{example}

\begin{example} Consider the dilated $\ZZ/6\ZZ$-cover $p:\tX\to X$ of the dumbbell graph shown on Figure~\ref{fig:Z6}. The dilation groups are $D(v_1)=\ZZ/3\ZZ$ and $D(v_2)=\ZZ/2\ZZ$. The dilated cohomology group is 
\[
H^1(X,D)=\frac{\ZZ/6\ZZ}{D(v_1)}\oplus\frac{\ZZ/6\ZZ}{D(v_2)}=\ZZ/2\ZZ\oplus \ZZ/3\ZZ,
\]
where the two terms in the direct sum correspond to the loops $e_1$ and $e_2$. The dilated voltage assignment $[\eta]\in H^1(X,D)$ defining the cover is equal to $1$ on $e_1$ and $e_2$ (and is $0$ on $e_3$). 

Let $x_i$ denote the length of $e_i$ for $i=1,2,3$. The group $\ZZ/6\ZZ$ has five nontrivial characters
\[
\rho_j:\ZZ/5\ZZ\to \CC^*,\quad \rho_j(k)=e^{\pi ijk/3},\quad j=1,\ldots,5.
\]
For $j=1,5$ the images of $D(v_1)=\ZZ/3\ZZ$ and $D(v_2)=\ZZ/2\ZZ$ under $\rho_j$ are nontrivial. Hence the corresponding matroids $\calM^(\tX/X,\rho_j)$ have rank $3$, the only basis is the entire edge set $E(X)$, and the weight polynomial is simply
\[
P_{\tX/X,\rho_j}(x_e)=w_{\rho_j}(E(X))x_1x_2x_3=x_1x_2x_3,\quad j=1,5.
\]
For $j=2,4$ we have that $\rho_j(D(v_1))\neq\{1\}$ but $\rho_j(D(v_2))=\{1\}$, hence the rank is $r(\calM^*(\tX/X,\rho_j))=2$. The bases of $\calM^*(\tX/X,\rho_j)$ are $\{e_1,e_2\}$ and $\{e_1,e_3\}$, with weights
\[
w_{\rho_j}(\{e_1,e_2\}=1,\quad w_{\rho_j}(\{e_1,e_3\}=|1-\rho_j(1)|^2=3,\quad j=2,3.
\]
Hence the weight polynomials are
\[
P_{\tX/X,\rho_j}(x_e)=w_{\rho_j}(\{e_1,e_2\})x_1x_2+w_{\rho_j}(\{e_1,e_3\})x_1x_3=x_1x_2+3x_1x_3,\quad j=2,3.
\]
Conversely, $\rho_3(D(v_1))=\{1\}$ but $\rho_j(D(v_2))\neq\{1\}$, the rank is again $r(\calM^*(\tX/X,\rho_j))=2$, but the bases are $\{e_1,e_2\}$ and $\{e_2,e_3\}$ with weights 
\[
w_{\rho_3}(\{e_1,e_2\}=1,\quad w_{\rho_3}(\{e_2,e_3\}=|1-\rho_3(1)|^2=4.
\]
Hence the remaining weight polynomial is 
\[
P_{\tX/X,\rho_3}(x_e)=w_{\rho_3}(\{e_1,e_2\})x_1x_2+w_{\rho_3}(\{e_2,e_3\})x_2x_3=x_1x_2+4x_2x_3.
\]
Therefore, by Equation~\eqref{eq:mainformula1} the $X$-specialized Jacobian polynomial of $\tX$ factors as
\[
J_{\tX}(x_e)=\frac{1}{6}2^33^2x_1x_2(x_1x_2x_3)^2(x_1x_2+3x_1x_3)^2(x_1x_2+4x_2x_3)=12x_1^5x_2^4x_3^2(x_1+4x_3)(x_2+3x_3)^2,
\]
and the number of spanning trees of $\tX$ is equal to
\[
|\Jac(\tX)|=J_{\tX}(1)=960.
\]

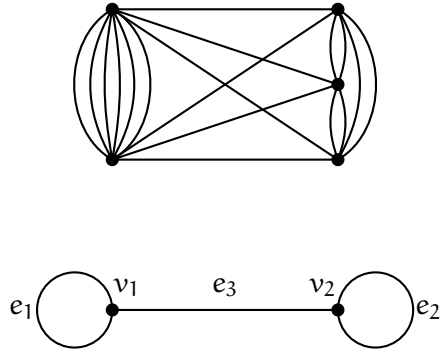
\begin{figure}
    \centering
\begin{tikzpicture}

\node at (0.2,0.3) {$v_1$};
\node at (2.8,0.3) {$v_2$};
\node at (-1.2,0) {$e_1$};
\node at (4.2,0) {$e_2$};
\node at (1.5,0.3) {$e_3$};

\draw[fill] (0,0) circle (.08);
\draw[fill] (3,0) circle (.08);

\draw[thick] (0,0) -- (3,0);
\draw[thick] (-0.5,0) circle (0.5);
\draw[thick] (3.5,0) circle (0.5);

\begin{scope}[shift={(0,2)}]

    \draw[fill] (0,0) circle (.08);
    \draw[fill] (0,2) circle (.08);
    \draw[fill] (3,0) circle (.08);
    \draw[fill] (3,1) circle (.08);
    \draw[fill] (3,2) circle (.08);

    \draw[thick] (0,0) -- (3,0);
    \draw[thick] (0,0) -- (3,1);
    \draw[thick] (0,0) -- (3,2);
    \draw[thick] (0,2) -- (3,0);
    \draw[thick] (0,2) -- (3,1);
    \draw[thick] (0,2) -- (3,2);

    \draw[bend left=10, thick] (0,0) to (0,2) ;
    \draw[bend left=30, thick] (0,0) to (0,2);
    \draw[bend left=60, thick] (0,0) to(0,2);
    \draw[bend right=10, thick] (0,0) to (0,2) ;
    \draw[bend right=30, thick] (0,0) to (0,2);
    \draw[bend right=60, thick] (0,0) to(0,2);

    \draw[bend right=20, thick] (3,0) to (3,1) ;
    \draw[bend left=20, thick] (3,0) to (3,1);
    
    \draw[bend right=30, thick] (3,0) to (3,2) ;
    \draw[bend right=60, thick] (3,0) to (3,2);
    
    \draw[bend right=20, thick] (3,1) to (3,2) ;
    \draw[bend left=20, thick] (3,1) to (3,2);

\end{scope}

\end{tikzpicture}

    \caption{Dilated $\ZZ/6\ZZ$-cover of the dumbbell graph}
    \label{fig:Z6}
\end{figure}

\end{example}

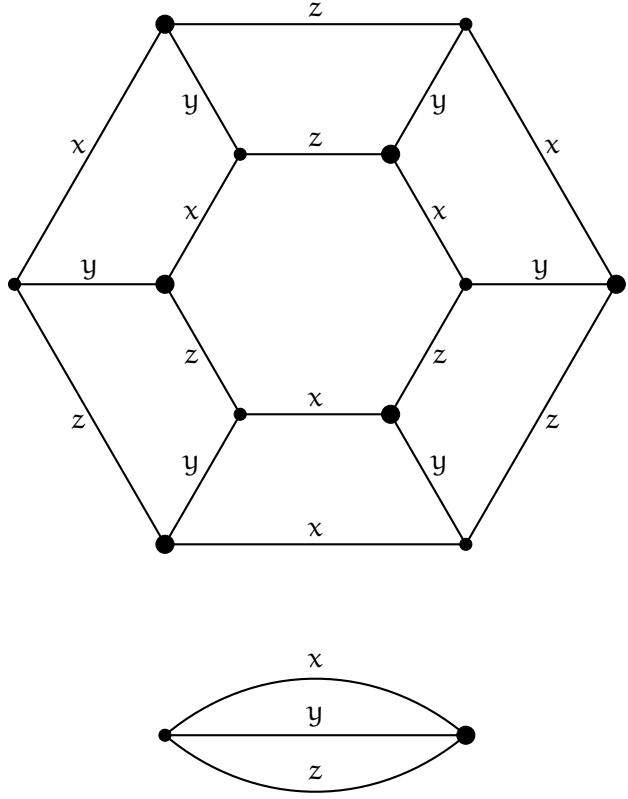
\begin{figure}
    \centering
    \begin{tikzpicture}

\node at (0,1) {$x$};
\node at (0,0.25) {$y$};
\node at (0,-0.5) {$z$};

\draw[fill] (-2,0) circle (.08);
\draw[fill] (2,0) circle (.12);

\draw[thick] (-2,0) -- (2,0);
\draw[bend left=40,thick] (-2,0) to (2,0);
\draw[bend right=40,thick] (-2,0) to (2,0);

\begin{scope}[shift={(0,6)}]

    \draw[fill] (2,0) circle (.08);
    \draw[fill] (1,1.73) circle (.12);
    \draw[fill] (-1,1.73) circle (.08);
    \draw[fill] (-2,0) circle (.12);
    \draw[fill] (1,-1.73) circle (.12);
    \draw[fill] (-1,-1.73) circle (.08);

    \draw[fill] (4,0) circle (.12);
    \draw[fill] (2,3.46) circle (.08);
    \draw[fill] (-2,3.46) circle (.12);
    \draw[fill] (-4,0) circle (.08);
    \draw[fill] (2,-3.46) circle (.08);
    \draw[fill] (-2,-3.46) circle (.12);
    
    \draw[thick] (2,0) -- (1,1.73) -- (-1,1.73) -- (-2,0) -- (-1,-1.73) -- (1,-1.73) -- (2,0);
    \draw[thick] (4,0) -- (2,3.46) -- (-2,3.46) -- (-4,0) -- (-2,-3.46) -- (2,-3.46) -- (4,0);

    \draw[thick] (2,0) -- (4,0);
    \draw[thick] (-2,0) -- (-4,0);
    \draw[thick] (1,1.73) -- (2,3.46);
    \draw[thick] (-1,1.73) -- (-2,3.46);
    \draw[thick] (1,-1.73) -- (2,-3.46);
    \draw[thick] (-1,-1.73) -- (-2,-3.46);


    \node at (1.65,0.96) {$x$};
    \node at (-1.65,0.96) {$x$};
    \node at (1.65,-0.96) {$z$};
    \node at (-1.65,-0.96) {$z$};
    \node at (0,1.93) {$z$};
    \node at (0,-1.53) {$x$};

    \node at (3.15,1.83) {$x$};
    \node at (3.15,-1.83) {$z$};
    \node at (-3.15,1.83) {$x$};
    \node at (-3.15,-1.83) {$z$};
    \node at (0,3.66) {$z$};
    \node at (0,-3.26) {$x$};

    \node at (3,0.2) {$y$};
    \node at (-3,0.2) {$y$};
    \node at (1.65,2.4) {$y$};
    \node at (1.65,-2.4) {$y$};
    \node at (-1.65,2.4) {$y$};
    \node at (-1.65,-2.4) {$y$};


\end{scope}

\end{tikzpicture}
    \caption{$S_3$-cover of the theta graph.}
    \label{fig:S3}
\end{figure}
\begin{example} Finally, we consider a free cover $p:\tX\to X$ with non-abelian Galois group $S_3$, shown on Figure~\ref{fig:S3}. The target graph $X$ is the theta graph with two vertices and three edges $e$, $f$, and $g$. The $S_3$-voltage (with respect to the left-to-right orientation) and the edge lengths are
\[
\eta(e)=(),\quad \eta(f)=(12),\quad \eta(g)=(123),\quad \ell(e)=x, \quad \ell(f)=y, \quad \ell(g)=z. 
\]
The source graph $\tX$ is a double hexagon. We distinguish one of the two vertices of $X$ and its preimages in $\tX$ by drawing them fatter, and label the edges of $X$ and $\tX$ by their lengths (so that the edges labeled $x$ in $\tX$ map to the edge labeled $x$ in $X$, and similarly for $y$ and $z$). A direct calculation shows that the $X$-specialized Jacobian polynomial of $\tX$ factors as
\begin{equation}
J_{\tX}(x,y,z)=6(xy+xz+yz)(x+z)(4xy+4xz+3y^2+4yz)^2,
\label{eq:JS3}
\end{equation}
and we analyze this product. By Theorem 19.3 in~\cite{2010Terras}, the $X$-specialized metric zeta function of $\tX$ factors as a product over the irreducible representations of $S_3$. Specifically,
\begin{equation}
\zeta_M(s,x_e,\tX)=\zeta_M(s,x_e,X)L_M(s,x_e,\tX/X,\sigma)L_M(s,x_e,\tX/X,\rho)^2,
\label{eq:zetaS3}
\end{equation}
where $\sigma$ and $\rho$ are the signature and standard representations, respectively. Since $J_{\tX}$ is obtained from $\zeta_M(s,x_e,\tX)$ at $s=1$, the factors in~\eqref{eq:JS3} correspond to the factors in~\eqref{eq:zetaS3}. Indeed, $xy+xz+yz$ is the Jacobian polynomial of $X$, while $x+z$ is the weight polynomial corresponding to $\sigma$ viewed as a character. Therefore, the last term is obtained from the $L$-function $L_M(s,x_e,\tX/X,\rho)$ at the rank two representation $\rho$, and it is natural to expect that it can be computed from the matroid $\calM(\tX/X)$ of the cover, twisted by the representation $\rho$. However, we do not know how to do so; in particular, the terms of the factor are no longer monomials, and one cannot expect a simple sum over the bases as in~\eqref{eq:weightpolynomial}.

\end{example}

We first prove Theorem~\ref{thm:main1} for free $G$-covers, by comparing the Taylor expansions of the zeta functions of $\tX$ and $X$ at $s=1$. We then use the resolution procedure of Lemma~\ref{lem:contraction} to derive the statement for arbitrary harmonic $G$-covers.

\begin{proposition} Let $p:\tX\to X$ be a free $G$-cover of a graph $X$ of genus $g$ with abelian Galois group $G$, and let $\rho:G\to \mathbb{C}^*$ be a nontrivial character. The reciprocal of the metric $L$-function has the following Taylor expansion at $s=1$:
\begin{equation}
L_M(s,x_e,\tX/X,\rho)^{-1}=2^{g-1}(-1)^{g-1} P_{\tX/X,\rho}(x_e)(s-1)^{g-1}+O\left((s-1)^g\right).
\label{eq:Lclass}
\end{equation}
\label{prop:Lclass}
\end{proposition}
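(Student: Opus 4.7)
The plan is to follow the strategy of Proposition~\ref{prop:s=1zeta}, substituting the Artin--Ihara $L$-function for the Ihara zeta function. The two-term determinant formula (Theorem~\ref{thm:edgeL}) combined with the specialization $w_{ef} = s^{x_e}$ implies that $L_M(s,x_e,\tX/X,\rho)^{-1}$ is a polynomial in the quantities $s^{x_e}$, so the Taylor coefficients
\[
L_k(x_e) := \left.\frac{\partial^k L_M(s,x_e,\tX/X,\rho)^{-1}}{\partial s^k}\right|_{s=1}
\]
are polynomials in $x_e$ of degree at most $k$, determined by their values on positive-integer tuples $(n_e) \in \ZZ_{>0}^{E(X)}$. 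For such an input the metric $L$-function specializes to the Artin--Ihara $L$-function of a subdivided free $G$-cover $\tX_{\un} \to X_{\un}$, where both graphs are subdivided edge-by-edge with matching multiplicities and the voltage of each $e \in E(X)$ is placed on a single subdivision edge. Since subdivision preserves the genus $g$, the proposition reduces to proving an analogous class number formula
\[
L(s,\tX/X,\rho)^{-1} = 2^{g-1}(-1)^{g-1} w(\calM^*(\tX/X,\rho))(s-1)^{g-1} + O\bigl((s-1)^g\bigr)
\]
for the ordinary Artin--Ihara $L$-function, together with the matroid identity $w(\calM^*(\tX_{\un}/X_{\un},\rho)) = P_{\tX/X,\rho}(n_e)$.

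For the class number formula, I would apply the three-term determinant formula (Theorem~\ref{thm:ArtinL}). The prefactor expands as $(1-s^2)^{g-1} = (-2)^{g-1}(s-1)^{g-1} + O((s-1)^g)$, while the residual determinant $\det[I_n - sA_\rho + s^2(Q - I_n)]$ evaluates at $s=1$ to the twisted Laplacian determinant $\det(Q - A_\rho)$. The main combinatorial step is then the weighted matrix-tree identity
\[
\det(Q - A_\rho) = w(\calM^*(\tX/X,\rho)),
\]
which I would prove by factoring $Q - A_\rho$ as $B_\rho B_\rho^{*}$ (up to conjugates) for a $\rho$-twisted signed incidence matrix $B_\rho$ whose column for an oriented edge $e$ has entries $-1$ at $s(e)$ and $\rho(\eta(e))$ at $t(e)$, and then applying Cauchy--Binet. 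The resulting sum runs over $n$-element subsets $S \subseteq E(X)$, whose complements $F = E(X) \setminus S$ have size $g-1$. A direct analysis of signed-incidence minors shows that $\det(B_\rho|_S) = 0$ unless every component of $X[S]$ has genus one with nontrivial $\rho$-voltage around its cycle, in which case $|\det(B_\rho|_S)|^2 = \prod_i |1 - \rho(\eta(C_i))|^2$, matching the weight $w_\rho(F)$ of Definition~\ref{def:weights}.

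The matroid identity on subdivisions is comparatively elementary. Given a basis $\tilde F$ of $\calM^*(\tX_{\un}/X_{\un},\rho)$ and an edge $e \in E(X)$, at most one of the $n_e$ subdivisions of $e$ can lie in $\tilde F$: removing two or more would create an internal free chain component on which the induced cover is trivial, contradicting independence. Hence $\tilde F$ descends to a basis $F := \{ e \in E(X) : \tilde F \cap \{e_1,\ldots,e_{n_e}\} \neq \emptyset \}$ of $\calM^*(\tX/X,\rho)$ with $w_\rho(\tilde F) = w_\rho(F)$, since cycle voltages are preserved by subdivision, and each such $F$ has exactly $\prod_{e \in F} n_e$ preimages. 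Summing gives $w(\calM^*(\tX_{\un}/X_{\un},\rho)) = \sum_F w_\rho(F)\prod_{e \in F} n_e = P_{\tX/X,\rho}(n_e)$, and matching $L_k(n_e)$ with the Taylor coefficients of the proposed expansion at every integer input then determines the polynomials $L_k(x_e)$, completing the proof. The hard part will be the weighted matrix-tree computation of $\det(Q - A_\rho)$: one must track signs and voltage factors carefully through each signed-incidence minor to extract the product $\prod_i (1 - \rho(\eta(C_i)))$ from the nonvanishing terms and conclude by taking absolute squares.
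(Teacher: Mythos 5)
Your proposal follows essentially the same route as the paper's proof: reduce to integer edge lengths via polynomiality of the Taylor coefficients, apply the three-term determinant formula to extract $\det(Q-A_\rho)$ as the leading coefficient, factor the twisted Laplacian as $B_\rho B_\rho^{\dagger}$ and apply Cauchy--Binet, observe that only subsets whose complementary components all have genus one contribute (with each cycle contributing $|1-\rho(\eta(C_i))|^2$), and finally match bases of the subdivided matroid with weighted bases of the original one. The paper carries out the "hard part" you flag exactly as you outline it, reducing each block to its unique cycle by stripping extremal vertices and evaluating the resulting circulant-type determinant as $(-1)^{l+1}(1-\overline{\rho(\eta(C_i))})$.
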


\begin{proof} We use the same trick as in the proof of Proposition~\ref{prop:metricclassnumberformula}. We first show that the above formula holds for all $x_e=1$, when the left hand side is the reciprocal of the Artin--Ihara $L$-function $L(s,\tX/X,\rho)$. We then deduce that~\eqref{eq:Lclass} holds for all positive integer values of the $x_e$, by replacing the edges of $X$ with arbitrary chains of edges. Since the Taylor coefficients of $L_M(s,x_e,\tX/X,\rho)^{-1}$ are polynomials in the $x_e$, this proves the formula.

Equation~\eqref{eq:threetermL} implies that $L(s,\tX/X,\rho)^{-1}$ has a zero of order at least $g-1$ at $s=1$, and that the leading order term is
\begin{equation}
L(s,\tX/X,\rho)^{-1}=2^{g-1}(-1)^{g-1}\det L_{\rho}(s-1)^{g-1}+O\left((s-1)^g\right).
\label{eq:Lleading}
\end{equation}
Here $L_\rho=Q-A_{\rho}$ is the \emph{$\rho$-twisted Laplacian matrix} of the cover $p:\tX\to X$. Unlike the ordinary Laplacian, it is nonsingular.

Fix an orientation on $X$ and let $[\eta]\in H^1(X,G)$ be a $G$-voltage assignment defining $p:\tX\to X$. We factor the matrix $L_\rho$ as follows. Let $n=|V(G)|$ and $m=|E(G)|$, and introduce the $n\times m$ \emph{target} and \emph{$\rho$-twisted source}  matrices:
\[
T_{ve}=\begin{cases}
1, & t(e)=v, \\
0, & \mbox{otherwise,}
\end{cases}\quad
(S_\rho)_{ve}=\begin{cases}
\overline{\rho(\eta(e))}, & s(e)=v, \\
0, & \mbox{otherwise.}
\end{cases}
\]
It is elementary to verify that
\[
L_\rho=Q-A_{\rho}=B_{\rho}B_{\rho}^{\dagger},\quad B_{\rho}=T-S_{\rho}, 
\]
where $B_{\rho}$ is the \emph{$\rho$-twisted edge matrix} of $X$. Hence we can compute the determinant of $L_{\rho}$ using the Cauchy--Binet theorem:
\begin{equation}
\det L_{\rho}=\sum_{F\subseteq E(X)}\det B_{\rho}(X\backslash F) \det B_{\rho}(X\backslash F)^{\dagger}=\sum_{F\subseteq E(X)}\overline{\det B_{\rho}(X\backslash F)}^2.
\label{eq:BinetCauchy}
\end{equation}
Here the sum is taken over all subsets $F\subseteq E(X)$ having $m-n=g-1$ elements, and $B_{\rho}(X\backslash F)$ is the $\rho$-twisted edge matrix of $X\backslash F$.

Fix one such $F$, and let $X\backslash F=X_1\sqcup\cdots\sqcup X_k$ be the decomposition of the complement into connected components. The matrix $B_{\rho}(X\backslash F)$ decomposes into blocks $B_{\rho}(X_i)$ corresponding to the $X_i$. By Lemma~\ref{lem:counting}, either $g(X_i)=0$ for some $i$ or $g(X_i)=1$ for all $i$. In the former case the block $B_{\rho}(X_i)$ is not square (having one more vertex than edge), and therefore $\det B_{\rho}(X\backslash F)=0$. Hence in~\eqref{eq:BinetCauchy} it is enough to sum over the subsets $F\subseteq E(X)$ such that each connected component of $X\backslash F$ has genus one (again by Lemma~\ref{lem:counting}, any such subset necessarily has $g-1$ edges). In this case all blocks are square and
\begin{equation}
\overline{\det B_{\rho}(X\backslash F)}^2=\prod_{i=1}^k\overline{\det B_{\rho}(X_i)}^2=\prod_{i=1}^k\det L_\rho(X_i).
\label{eq:detBrho}
\end{equation}

Fix one of the components $X_i$, which consists of a unique cycle $C_i$ with trees attached. To compute the determinant $\det L_\rho(X_i)$, we choose a different orientation on $X_i$, namely one that follows the cycle and is oriented away from it along the trees. For any extremal edge $e\in E(X_i)$ with extremal vertex $v=t(e)$, the $v$-th row of $B_{\rho}(X_i)$ has a $1$ in the $e$-th column and zeroes everywhere else. Removing $v$ and $e$ does not change the determinant, and removing all extremal vertices we see that $\det B_\rho(X_i)=\det B_{\rho}(C_i)$.

We now cyclically label the vertices $V(C_i)=\{v_1,\ldots,v_l\}$ and edges $E(C_i)=\{e_1,\ldots,e_l\}$ so that $s(e_i)=v_i$ and $t(e_i)=e_{i+1}$. Denoting $\rho_i=\rho(\eta(e_i))$, the determinant can be easily computed by induction:
\[
\det B_{\rho}(C_i)=\left|\begin{matrix}
    -\overline{\rho_1} & 1 & 0 & \cdots & 0 \\
    0 & -\overline{\rho_2} & 1 & \cdots & 0 \\
    0 & 0 & -\overline{\rho_3} & \cdots & 0 \\
    \cdots & \cdots & \cdots & \cdots & \cdots \\
    1 & 0 & 0 & \cdots & -\overline{\rho_l}
\end{matrix}\right|=(-1)^{l+1}\left(1-\overline{\rho_1}\cdots\overline{\rho_l}\right)=(-1)^{l+1}(1-\overline{\rho(\eta(X_i))}).
\]
Therefore
\begin{equation}
\det L_\rho(X_i)=\overline{\det B_{\rho}(X_i)}^2=\overline{\det B_{\rho}(C_i)}^2= \left|1-\overline{\rho(\eta(X_i))}\right|^2=w_\rho(X_i).
\label{eq:detLrho}
\end{equation}
Plugging~\eqref{eq:detLrho} and~\eqref{eq:detBrho} into~\eqref{eq:BinetCauchy}, we see that
\begin{equation}
\det L_{\rho}=\sum_{F\in \calB(\calM^*(\tX/X,\rho))}w_\rho(F)=
\left.P_{\tX/X,\rho}(x_e)\right|_{x_e=1}.
\label{eq:detLrho2}
\end{equation}
The Artin--Ihara $L$-function $L(s,\tX/X,\rho)$ is the metric $L$-function specialized at $x_e=1$, so plugging~\eqref{eq:detLrho2} into~\eqref{eq:Lleading} we see that~\eqref{eq:Lclass} holds when all $x_e=1$.

We now fix positive integers $\un=\{n_e\}_{e\in E(X)}$ and construct the graph $X_{\un}$ by replacing each edge $e\in E(X)$ by a chain of $n_e$ edges (see proof of Proposition~\ref{prop:s=1zeta}). For each $e\in E(X)$, we arbitrarily pick one of these edges $e'$ and set $(\eta_{\un})_{e'}=\eta_e$, and we set the voltages on the remaining edges to be trivial. The resulting $G$-voltage assignment $[\eta_{\un}]\in H^1(X_{\un},G)$ defines a free $G$-cover $p_{\un}:\tX_{\un}\to X_{\un}$ of the same topological type as $p:\tX\to X$, but with each edge above and below extended into a chain. 

Let $F=\{e_1,\ldots,e_{g-1}\}\in \calB(\calM^*(\tX/X,\rho))$ be a basis of the dual matroid. For each $i=1,\ldots,g-1$, pick one of the $n_{e_i}$ edges in the chain in $X_{\un}$ corresponding to $e_i$. The result is a basis of $\calM^*(\tX_{\un}/X_{\un},\rho)$ having the same weight as $X$, and conversely every basis of $\calM^*(\tX_{\un}/X_{\un},\rho)$ is obtained in this way. In other words, each $F\in \calB(\calM^*(\tX/X,\rho))$ corresponds to $\prod_{e\in F}n_e$ bases of $\calM^*(\tX_{\un}/X_{\un},\rho)$, all having the same weight as $F$. 
Hence~\eqref{eq:detLrho2} implies that
\begin{equation}
\det L_{\rho}(X_{\un})=\sum_{F\in \calB(\calM^*(\tX_{\un}/X_{\un},\rho))}w_\rho(F)=
\sum_{F\in \calB(\calM^*(\tX/X,\rho))}w_\rho(F)\prod_{e\in F}n_e=
\left.P_{\tX/X,\rho}(x_e)\right|_{x_e=n_e}.
\label{eq:detLrhoun}
\end{equation}
Since $L(s,\tX_{\un}/X_{\un},\rho)$ is the specialization of $L_M(s,x_e,\tX/X,\rho)$ at $x_e=n_e$, plugging~\eqref{eq:detLrhoun} into~\eqref{eq:Lleading} shows that~\eqref{eq:Lclass} holds when $x_e=n_e$, and therefore for arbitrary $x_e$.

\end{proof}


\begin{proof}[Proof of Theorem~\ref{thm:main1}] We first assume that $f:\tX\to X$ is a free $G$-cover, so that $|D(v)|=1$ for all $v\in V(X)$. Let
\[
g=g(X)=|E(X)|-|V(X)|+1,\quad g(\tX)=N(g-1)+1
\]
be the genera of $X$ and $\tX$, respectively. The degrees of the weight polynomials $P_{\tX/X,\rho}(x_e)$ are all equal to $g-1$. By Propositions~\ref{prop:metricclassnumberformula} and~\ref{prop:Lclass}, the reciprocals of the $X$-specialized metric zeta function of $\tX$, the metric zeta function of $X$, and the metric $L$-functions at the nontrivial characters have the following leading order terms at $s=1:$
\begin{align*}
\zeta_M(s,x_e,\tX)^{-1}&=2^{N(g-1)+1}(-1)^{N(g-1)+2}N(g-1)J_{\tX}(x_e)(s-1)^{N(g-1)+1}+\cdots,\\
\zeta_M(s,x_e,X)^{-1}&=2^{g}(-1)^{g+1}(g-1)J_X(x_e)(s-1)^g+\cdots,\\
L_M(s,x_e,\tX/X,\rho)^{-1}&=2^{g-1}(-1)^{g-1} P_{\tX/X,\rho}(x_e)(s-1)^{g-1}+\cdots.
\end{align*}
On other other hand, $\zeta_M(s,x_e,\tX)$ factors~\eqref{eq:metriczetafactorization}  as a product of $\zeta_M(s,x_e,X)$ and the $L_M(s,x_e,\tX/X,\rho)$ at the $N-1$ nontrivial characters of $G$. Comparing the leading order terms, we obtain~\eqref{eq:mainformula1}.

By Lemma~\ref{lem:contraction}, an arbitrary harmonic $G$-cover can be obtained from a free $G$-cover by contracting a number of loops on the base of the latter. Hence, to finish the proof of Theorem~\ref{thm:main1}, it is sufficient to show that if formula~\eqref{eq:mainformula1} holds for a harmonic $G$-cover $f:\tX\to X$, then it also holds for the cover $f_0:\tX_0\to X_0$ obtained by contracting an arbitrary loop $e_0\in E(X)$.

Let $v_0\in V(X)$ be the root vertex of the loop $e_0$, let $x_0$ denote the length of $e_0$, let $D(v_0)\subseteq G$ be the dilation subgroup of $v_0$ on the graph $X$, and let $\eta_0\in G/D(v_0)$ be the dilated $G$-voltage on the loop $e_0$ with a choice of orientation. The dilation group of the vertex $v_0$ on the graph $X_0$ is $D(v_0)+\eta_0G$, where $\eta_0G$ is the subgroup generated by $\eta_0$. Let $H=|D(v_0)+\eta_0G|$ and $M=|D(v_0)|$, so that $H/M$ is the order of $\eta_0$ in $(D(v_0)+\eta_0G)/D(v_0)$. The fiber $p^{-1}(v_0)$ is identified with the quotient group $G/D(v_0)$, while $p^{-1}(e_0)=G$, and the source and target maps from $G=p^{-1}(e_0)$ to $G/D(v_0)=p^{-1}(v_0)$ are
\[
s:G\to G/D(v_0),\quad g\mapsto g+D(v_0),\quad t:G\to G/D(v_0),\quad g\mapsto g+\eta_0+D(v_0).
\]
Hence the preimage of the subgraph $\{v_0,e_0\}\subset X$ in $\tX$ consists of $N/H$ isomorphic graphs $C_1,\ldots,C_{N/H}$, each of which is a cyclic arrangement of $H/M$ vertices, with each adjacent pair of vertices joined by $M$ edges. Hence
\[
g(X)=g(X_0)+1,\quad g(\tX)=g(\tX_0)+N-\frac{N}{M}+\frac{N}{H}.
\]

We first determine the relationship between the $X$- and $X_0$-specialized Jacobian polynomials of $\tX$ and $\tX_0$, essentially by repeatedly applying Equation~\eqref{eq:Jedgecontraction}. There are two types of spanning trees $T\subset \tX$.

\begin{enumerate} \item If $T\cap C_i$ is a spanning tree of $C_i$ for each $i=1,\ldots,N/H$, then the contraction $T_0$ of $T$ is a spanning tree of $\tX_0$, and conversely every spanning tree $T_0$ of $\tX_0$ can be extended to a spanning tree $T$ of $X$ by gluing in a choice of spanning tree for each $C_i$. A spanning tree of $C_i$ is obtained by removing all edges between two adjacent vertices ($H/M$ choices for the pair), and all but one edge between each other pair of adjacent vertices ($M$ choices for each other pair). Hence the number of spanning trees of each $C_i$ is equal to
\[
|\Jac(C_i)|=\frac{H}{M}M^{H/M-1}=HM^{H/M-2}.
\]
The complementary edges $F=E(\tX)\backslash E(T)$ consist of the complementary edges $F_0=E(\tX_0)\backslash E(T_0)$ of the contracted tree $T_0\subset \tX_0$ and the $N-N/M+N/H$ preimages of $e_0$ (each having length $x_0$) that form the complements of the spanning trees $T\cap C_i\subset C_i$. 

\item On the other hand, if $T\cap C_i$ is not a spanning tree of $C_i$ for some $i=1,\ldots,N/H$, then the complement $F=E(\tX)\backslash E(T)$ contains at least  $N-N/M+N/H+1$ preimages of $e_0$. 

\end{enumerate}
Counting all choices, we see that
\begin{align}
J_{\tX}(x_e)=\sum_{T\subset X\,\mathrm{ type }\,1}\prod_{e\in E(X)\backslash E(T)}x_e+\sum_{T\subset X\,\mathrm{ type }\,2}\prod_{e\in E(X)\backslash E(T)}x_e
=\left[HM^{H/M-2}\right]^{N/H}x_0^{N-N/M+N/H}J_{\tX_0}(x_e)+\notag\\
+O\left(x_0^{N-N/M+N/H+1}\right)=H^{N/H}M^{N/M-2N/H}x_0^{N-N/M+N/H}J_{\tX_0}(x_e)+O\left(x_0^{N-N/M+N/H+1}\right).
\label{eq:Jtxcontact}
\end{align}

We now determine what happens to the right hand side of Equation~\eqref{eq:mainformula1} when we contract the loop $e_0$. By Lemma~\ref{lem:contraction} we have
\begin{equation}
J_X(x_e)=x_0J_{X_0}(x_e).
\label{eq:JXcontract}
\end{equation}
Now let $\rho:G\to \mathbb{C}^*$ be a nontrivial character. There are three possibilities.

\begin{enumerate}
    \item \label{item:type1} $\rho(D(v_0))=\{1\}$ and $\rho(\eta_0)=1$. The $e_0$ is a coloop of the matroid $\calM^*(\tX/X,\rho)$, and $F\mapsto F\backslash \{e_0\}$ is a bijection between $\calB(\calM^*(\tX/X,\rho))$ and $\calB(\calM^*(\tX_0/X_0,\rho))$. Furthermore, the weights of $F$ and $F\backslash \{e_0\}$ (in terms of the two different matroids) are equal:
    \[
    w_{X,\rho}(F)=w_{X_0,\rho}(F\backslash \{e_0\}).
    \]
    It follows that
    \begin{equation}
        P_{\tX/X,\rho}=\sum_{F\in \calB(\calM^*(\tX/X,\rho))}w_{X,\rho}(F)\prod_{e\in F}x_e=
        \sum_{F\backslash\{e_0\}\in \calB(\calM^*(\tX/X,\rho))}w_{X_0,\rho}(F\backslash\{e_0\})x_0\prod_{e\in F\backslash\{e_0\}}x_e=
        x_0P_{\tX_0/X_0,\rho}.
\label{eq:Pcontractedcoloop}
    \end{equation}
    \item \label{item:type2} $\rho(D(v_0))=\{1\}$ and $\rho(\eta_0)\neq 1$. Here $e_0$ is not a coloop of $\calM^*(\tX/X,\rho)$, the ranks of $\calM^*(\tX/X,\rho)$ and $\calM^*(\tX_0/X_0,\rho)$ are equal, and there is a bijection
\[
\left\{F\in \calB(\calM^*(\tX/X,\rho)):e_0\notin F\right\}\leftrightarrow \left\{F\in \calB(\calM^*(\tX_0/X_0,\rho))\right\}.
\]
Let $F\in \calB(\calM^*(\tX/X,\rho))$ be a basis not containing $e_0$, let $X\backslash F=X_1\sqcup \cdots\sqcup X_k$ be the connected component decomposition of the complement, and assume without loss of generality that the loop $e_0$ lies on $X_1$. Then $X_1$ is a graph of type (a) according to Definition~\ref{def:weights} and has weight $w_{X,\rho}(X_1)=|1-\rho(\eta_0)|^2$. Contracting the loop $e_0$, we obtain
\[
X_0\backslash F=X_{1,0}\sqcup X_2\sqcup \cdots\sqcup X_k,
\]
where $X_{1,0}$ is now a graph of type (b) and has weight $w_{X_0,\rho}(X_{1,0})=1$. The weights of the other connected components do not change, hence
\[
w_{X,\rho}(F)=|1-\rho(\eta_0)|^2w_{X_0,\rho}(F),
\]
and therefore
\[
P_{\tX/X,\rho}(x_e)=\sum_{F:e_0\notin F}w_{X,\rho}(F)\prod_{e\in F}x_e+\sum_{F:e_0\in F}w_{X,\rho}(F)\prod_{e\in F}x_e=
\]
\[
=\sum_{F\in \calB(\calM^*(\tX_0/X_0,\rho))}|1-\rho(\eta_0)|^2w_{X_0,\rho}(F)\prod_{e\in F}x_e+O(x_0)=|1-\rho(\eta_0)|^2P_{\tX_0/X_0,\rho}(x_e)+O(x_0).
\]

\item \label{item:type3} $\rho(D(v_0))\neq \{1\}$. Then $e_0$ is again a coloop of $\calM^*(\tX/X,\rho)$, and the same relation~\eqref{eq:Pcontractedcoloop} holds as for type~\ref{item:type1}.
\end{enumerate}

We now count the number of characters of each type. There is a natural bijection between nontrivial characters of type~\ref{item:type1} and nontrivial characters on the group $G/(D(v_0)+\eta_0G)$, hence there are $N/H-1$ characters of this type. Similarly, the characters vanishing on $D(v_0)$ are identified with the characters on the group $G/D(v_0)$. There are $N/M$ such characters, split into $N/H$ equivalence classes according to their values on the cyclic subgroup $(D(v_0)+\eta_0G)/D(v_0)$ generated by $\eta_0$. Hence there are
\[
\frac{H}{M}\left(\frac{N}{H}-1\right)=\frac{N}{M}-\frac{H}{M}
\]
characters of type~\ref{item:type2}. In addition, we calculate the product of $|1-\rho(\eta_0)|^2$ over all type~\ref{item:type2} characters:
\[
\prod_{\rho:G\to\mathbb{C}^*\,\mathrm{type}\,\ref{item:type2}}|1-\rho(\eta_0)|^2=\prod_{\rho:G/D(v)\to \mathbb{C}^*,\rho(\eta_0)\neq 0}|1-\rho(\eta_0)|^2=
\]
\[
=
\left[\prod_{\rho:(D(v_0)+\eta_0G)/D(v_0)\to \mathbb{C}^*,\rho(\eta_0)\neq 1}|1-\rho(\eta_0)|^2
\right]^{N/H}=\left[\left(\frac{H}{M}\right)^2\right]^{N/H}=H^{2N/H}M^{-2N/H}.
\]
In the last step, we use the following calculation. The group $(D(v_0)+\eta_0G)/D(v_0)$ is cyclic of order $H/M$ with generator $\eta_0$. Hence there are $H/M-1$ characters $\rho_1,\ldots,\rho_{H/M-1}$ in the product and $\rho_j(\eta_0)=\zeta^j$, where $\zeta=e^{2\pi i M/H}$. Therefore the product is given by
\[
\prod_{j=1}^{H/M-1}(1-\zeta^j)=\left.\prod_{j=1}^{H/M-1}(z-\zeta^j)\right|_{z=1}=\left.\frac{z^{H/M}-1}{z-1}\right|_{z=1}=\left.(z^{H/M-1}+\cdots+z+1)\right|_{z=1}=\frac{H}{M}.
\]
Finally, the remaining
\[
N-1-\left(\frac{N}{H}-1\right)-\left(\frac{N}{M}-\frac{H}{M}\right)N=N-\frac{N}{H}-\frac{N}{M}+\frac{H}{M}
\]
characters are of type~\ref{item:type3}.

We can therefore evaluate the product
\begin{align}
\prod_{\rho\in \widehat{G}'}P_{\tX/X,\rho}(x_e)=\prod_{\rho\,\mathrm{types}~\ref{item:type1}\,\mathrm{and}~\ref{item:type3}}x_0P_{\tX_0/X_0,\rho}(x_e)
\prod_{\rho\,\mathrm{type}~\ref{item:type2}}\left[|1-\rho(\eta_0)|^2P_{\tX_0/X_0,\rho}(x_e)+O(x_0)\right]=\notag\\
=x_0^{N-N/M+H/M-1}H^{2N/H}M^{-2N/H}\prod_{\rho\in \widehat{G}'}P_{\tX_0/X_0,\rho}(x_e)+
O\left(x_0^{N-N/M+H/M}\right).
\label{eq:Pcontact}
\end{align}

Now assume that the cover $p:\tX\to X$ satisfies Equation~\eqref{eq:mainformula1}, which we write as
\[
J_{\tX}(x_e)=\frac{1}{N}M^{N/M}\prod_{v\neq v_0}|D(v)|^{N/|D(v)|}J_X(x_e)\prod_{\rho\in \widehat{G}'}P_{\tX/X,\rho}(x_e).
\]
Plugging~\eqref{eq:Jtxcontact} in the left hand side,~\eqref{eq:JXcontract} and~\eqref{eq:Pcontact} in the right hand side, dividing by $x_0^{N-N/M+H/M}$, and then setting $x_0=0$ to kill higher order terms, we obtain
\[
J_{\tX_0}(x_e)=\frac{1}{N}H^{N/H}\prod_{v\neq v_0}|D(v)|^{N/|D(v)|}J_{X_0}(x_e)\prod_{\rho\in \widehat{G}'}P_{\tX_0/X_0,\rho}(x_e),
\]
which is exactly Equation~\eqref{eq:mainformula1} for the cover $p_0:\tX_0\to X_0$, having a dilation group of order $H=|D(v_0)+\eta_0G|$ at $v_0$. This completes the proof.


\end{proof}

\bibliographystyle{alpha}
\bibliography{references}

\end{document}